%
%
%
%
%
\documentclass[invmat,final]{svjour}
%
\usepackage{amsmath,amsfonts,amscd,amssymb,graphicx}
%

\newcommand{\ip}{\int_\mathbb{R}}

\newcommand{\C}{\mathbb{C}}

\def\bb1{{1\!\!1}}

\def\bU{{\hat{u}}}
\def\bV{{\hat{v}}}
\def\R{\Re e}
\def\I{\Im m}

\begin{document}
\title{Stability of isentropic viscous shock profiles in the high-Mach number limit}
\titlerunning{Stability of isentropic viscous shock profiles}
\author{Jeffrey Humpherys\inst{1} \and Olivier Laffite\inst{2} \and Kevin Zumbrun\inst{3}
}                     
%
%
\institute{Department of Mathematics, Brigham Young University, Provo, UT 84602 \and LAGA, Institut Galilee, Universite Paris 13, 93 430 Villetaneuse and CEA Saclay, DM2S/DIR, 91 191 Gif sur Yvette Cedex \and Department of Mathematics, Indiana University, Bloomington, IN 47402}
\date{Received: date / Revised version: date}
%
\maketitle
\begin{abstract}
By a combination of asymptotic ODE estimates and numerical Evans
function calculations,
we establish 
stability of viscous shock solutions of
the isentropic compressible Navier--Stokes equations
with $\gamma$-law pressure
(i) in the limit as Mach number $M$ goes to infinity, 
for any $\gamma\ge 1$ (proved analytically),
and (ii) for $M\ge 2,500$, $\gamma\in [1,2.5]$
(demonstrated numerically).
This builds on and completes earlier studies by
Matsumura--Nishihara and
Barker--Humpherys--Rudd--Zumbrun establishing stability for 
low and intermediate Mach numbers, respectively,
indicating unconditional stability, independent of shock amplitude,
of viscous shock waves
for $\gamma$-law gas dynamics in the range $\gamma \in [1,2.5]$.
Other $\gamma$-values may be treated similarly, but have not
been checked numerically.
The main idea is to establish convergence of the Evans
function in the high-Mach number limit to that of a pressureless,
or ``infinitely compressible'', 
gas with additional upstream boundary condition determined
by a boundary-layer analysis.
Recall that low-Mach number behavior is incompressible.
\end{abstract}
%
\section{Introduction}
\label{int}

The isentropic compressible Navier-Stokes equations in one 
spatial dimension expressed in Lagrangian coordinates take the form
\begin{equation}
\begin{split}
\label{psystem}
v_{t}-u_{x} &=0, \\
u_{t}+p(v)_{x} &= \left(\frac{u_x}{v}\right)_x,
\end{split}
\end{equation}
where $v$ is specific volume, $u$ is velocity,
and $p$ pressure.
We assume an adiabatic pressure law
\begin{equation}\label{gaslaw}
p(v) = a_0 v^{-\gamma}
\end{equation}
corresponding to a $\gamma$-law gas,
for some constants $a_0>0$ and $\gamma \geq 1$.  
In the thermodynamical rarified gas approximation, $\gamma>1$ is the average over constituent particles of $\gamma=(n+2)/n$, where $n$ is the number of internal degrees of freedom of an individual particle \cite{Ba}: $n=3$ ($\gamma=1.66...$) for monatomic, $n=5$ ($\gamma=1.4$) for diatomic gas.  
For dense fluids, $\gamma$ is typically determined phenomenologically \cite{H}.
In general, $\gamma$ is usually taken within $1 \leq \gamma \leq 3$ 
in models of gas- or fluid-dynamical flow, 
whether phenomenological or derived by statistical 
mechanics \cite{Sm,Se.1,Se.2}.

%

It is well known that these equations support 
{\it viscous shock waves}, or asymptotically-constant traveling-wave solutions
\begin{equation}
\label{profile}
(v,u)(x,t)=(\bV, \bU)(x-s t),
\qquad
\lim_{z\to \pm \infty}(\bV,\bU)(z)=(v,u)_\pm,
\end{equation}
in agreement with physically-observed phenomena.
In nature, such waves are seen to be quite stable,
even for large variations in pressure between $v_\pm$.
However, it is a long-standing mathematical question 
to what extent this is reflected in the continuum-mechanical
model \eqref{psystem},
that is, for which choice of parameters
$(v_\pm,u_\pm,\gamma)$ are solutions of \eqref{profile} time-evolutionarily
stable in the sense of PDE; see, for example, the discussions in
\cite{IO,BE}.

The first result on this problem was obtained
by Matsumura and Nishihara in 1985 \cite{MN} using clever energy
estimates on the integrals of perturbations in $v$ and $u$,
by which they established stability with respect to the
restricted class of perturbations with ``zero mass'',
i.e., perturbations whose integral is zero, for shocks
with sufficiently small amplitude
$$
|p(v_+)-p(v_-)|\le C(v_-,\gamma),
$$
with $C\to \infty$ as $\gamma \to 1$, but $C<<\infty$ for $\gamma\ne 1$. 
There followed a number of works by Liu, Goodman, Szepessy-Xin,
and others \cite{L.1,Go.2,SX,L.2} toward the treatment
of general, nonzero mass perturbations;
see \cite{ZH,Z.2,Z.3} and references therein.
A complete result of stability with respect to general 
$L^1\cap H^3$ perturbations of small-amplitude shocks of 
system \eqref{psystem} was finally obtained in 2004 by 
Mascia and Zumbrun \cite{MZ.2} using pointwise semigroup 
techniques introduced by Zumbrun and Howard \cite{ZH,Z.4}
in the strictly parabolic case.


The result of \cite{MZ.2}, together with the small-amplitude spectral stability result 
of Humpherys and Zumbrun \cite{HuZ.2}\footnote{
The result of \cite{HuZ.1} is obtained by energy estimates
combining the techniques of \cite{MN} with those 
of \cite{Go.1,Go.2};
a similar approach has been used in \cite{LiuYu}
to obtain small-amplitude zero-mass stability of Boltzmann shocks.
See \cite{PZ,FS} for an alternative approach based on asymptotic
ODE methods.
}
generalizing that of \cite{MN},
in fact yields stability of small-amplitude 
shocks of general symmetric hyperbolic-parabolic systems, 
largely settling the problem of small-amplitude shock stability
for continuum mechanical systems.

However, there remains the interesting question of {\it large-amplitude
stability}.
The main result in this direction, following a general strategy
proposed in \cite{ZH},
is a ``refined Lyapunov theorem''\footnote{
``Refined'' because the linearized operator $L$ does not
possess a spectral gap, hence $e^{Lt}$ decays time-algebraically
and not exponentially; see \cite{ZH,Z.3} for further 
discussion.}
established by Mascia and Zumbrun \cite{MZ.4,Z.3} for general 
symmetric hyperbolic--parabolic systems, 
stating that linearized and nonlinear $L^1\cap H^3\to L^1\cap H^3$
orbital stability (the standard notions of stability) are equivalent
to {\it spectral stability},
or nonexistence of nonstable (nonnegative real part)
eigenvalues of the linearized operator $L$ about the wave,
other than the single zero eigenvalue arising through translational
invariance of the underlying equations.

%
This reduces the problem of large-amplitude stability to the
study of the associated eigenvalue equation $(L-\lambda)u=0$,
a standard analytically and numerically well-posed
(boundary value) problem in ODE, which can be attacked
by the large body of techniques developed for asymptotic,
exact, and numerical study of ODE.
In particular, there exist well-developed and efficient numerical algorithms
to determine the number of unstable roots
for any specific linearized operator $L$, independent of its
origins in the PDE setting;
see, e.g., \cite{Br.1,Br.2,BrZ,BDG,HuZ.2} and references therein.
In this sense, the problem of determining stability of any single
wave is satisfactorily resolved, or, for that matter, of any
compact family of waves.
To determine stability of a family of waves across an unbounded
parameter regime, however, is another matter.
It is this issue that we confront in attempting to decide 
the stability of general isentropic Navier--Stokes shocks.

As pointed out in \cite{ZH,HuZ.1}, 
zero-mass stability implies (and in a practical sense is
roughly equivalent to) spectral stability.  Thus, the
original results of Matsumura and Nishida
\cite{MN} imply small-amplitude shock
stability for general $\gamma$ and large-amplitude stability 
as $\gamma\to 1$.
Recently, Barker, Humpherys, Rudd, and Zumbrun \cite{BHRZ}
have carried out a numerical Evans function study
indicating stability on the large, but still bounded, parameter
range $\gamma \in [1,3]$, $1\le M\le 3,000$, where $M$
is the Mach number associated with the shock.
For discussion of the Evans function, see Section \ref{evanssec}. 
Recall that Mach number is an alternative measure of shock
strength, with $1$ corresponding to $|p(v_+)-p(v_-)|=0$ and
$M\to \infty$ corresponding to $|p(v_+)-p(v_-)|\to \infty$;
see Appendix A, \cite{BHRZ}, or Section \ref{profsec} below.
Mach $3,000$ is far beyond the hypersonic regime $M\sim 10^1$
encountered in current aerodynamics.
However, the mathematical question of stability across
arbitrary $\gamma$, $M$ remains open up to now.

In this paper, we resolve this question, using 
a combination of asymptotic ODE estimates and 
Evans function calculations to 
conclude,
first, stability
of isentropic Navier--Stokes shocks in the large Mach 
number limit $M\to \infty$ for any $\gamma\ge 1$,
and, second, stability for all $M\ge 2,500$ 
for $\gamma\in [1,2.5]$
(for $\gamma\in [1,2]$,
we obtain in fact stability for $M\ge 500$.)
The first result is obtained analytically, the second
by a systematic numerical study.
%
Together with the numerical results of \cite{BHRZ}, this
gives
convincing numerical evidence of
{\it unconditional stability for $\gamma \in [1,2.5]$,
independent of shock amplitude}.
As in \cite{BHRZ}, our numerical study {is not
a numerical proof}, but 
contains the necessary ingredients for one;
see discussion, Section \ref{quant}.
%
The restriction to $\gamma \in [1,2.5]$ is an arbitrary one
coming from the choice of parameters on which the numerical
study \cite{BHRZ} was carried out; stability for other $\gamma$
can be easily checked as well.

Our method of analysis is straightforward, though somewhat
delicate to carry out.
Working with the rescaled and conveniently rearranged
versions of the equations introduced in \cite{BHRZ},
we observe that the associated eigenvalue equations
converge uniformly as Mach number goes to infinity
on a ``regular region'' $x\le L$, for any fixed $L>0$,
to a limiting system that is well-behaved (hence treatable by
the standard methods of \cite{MZ.3,PZ,BHRZ}) in the sense
that its coefficient matrix converges uniformly exponentially
in $x$ to limits at $x=\pm \infty$, but is underdetermined
at $x=+\infty$.

On the complementary ``singular region'' $x\ge L$, the
convergence is only pointwise due to a fast ``inner structure''
featuring rapid variation of
the converging coefficient matrices near $x=+\infty$,
but the behavior at $x=+\infty$ is of course determinate.
Performing a boundary-layer analysis on the singular region
and matching across $x=L$, we are able to show convergence of the Evans function
of the original system as the Mach number goes to infinity
to an Evans function of the limiting system with 
an appropriately imposed additional condition at $x=+\infty$,
upstream of the shock.
This reduces the question of stability in the high-Mach number limit
to existence or nonexistence of zeroes of the limiting Evans function
on $\R \lambda \ge 0$,
a question that can be resolved by routine numerical computation
as in \cite{BHRZ},
or by energy estimates as in Appendix \ref{nonvanishing}.

The limiting system can be recognized as the eigenvalue equation
associated with a pressure-less ($\gamma=0$) gas, that is, 
the ``infinitely-compressible'' limit one might expect as the 
Mach number goes to infinity.  Recall that behavior in the low Mach number
limit is incompressible \cite{KM,KLN,Ho}.
However, the upstream boundary condition 
has to our knowledge
no such simple interpretation.
Indeed, to carry out the boundary-layer analysis 
by which we derive this condition 
is the main technical difficulty of the paper.

Besides their independent interest, the results of this paper  
seem significant as prototypes for future analyses.
Our calculations use some properties specific to the structure
of \eqref{psystem}.  In particular, we make use of surprisingly
strong energy estimates carried out in \cite{BHRZ} in
confining unstable eigenvalues 
to a bounded set independent of shock strength (or Mach number).
Also, we use the extremely simple structure of the
eigenvalue equation to carry out the key analysis of the eigenvalue
flow in the singular region near $x=+\infty$ essentially by hand.
However, these appear to be conveniences rather than essential
aspects of the analysis.
It is our hope that the basic argument structure of this paper
together with \cite{BHRZ}
can serve as a blueprint for the study of large-amplitude stability
in more general situations.

In particular, we expect that the analysis will carry over
to the full (nonisentropic) equations of gas dynamics 
with ideal gas equation of state, which, formally, 
decouple in the high-Mach number limit 
into the equations of isentropic pressureless gas dynamics
studied here,
augmented with a separate temperature equation governed by 
simple diffusion/Fourier's law.

\section{Preliminaries}\label{prelim}
We begin by recalling a number of preliminary steps carried out
in \cite{BHRZ}.
Making the standard change of coordinates $x \rightarrow x-s t$, 
we consider instead stationary solutions $(v,u)(x,t)\equiv (\bV,\bU)(x)$
of
\begin{equation}
\begin{split}
\label{movingpsystem}
v_t - s v_x - u_x &= 0,\\
u_t - s u_x + (a_0 v^{-\gamma})_x &= \left(\frac{u_x}{v}\right)_x.
\end{split}
\end{equation}
Under the rescaling 
\begin{equation}\label{scaling}
(x,t,v,u) \rightarrow 
(-\varepsilon sx, \varepsilon s^2 t, v/\varepsilon, -u/(\varepsilon s)),
\end{equation}
where $\varepsilon$ is chosen so that $0 < v_+ < v_- = 1$,  
our system takes the convenient form
\begin{equation}
\begin{split}
\label{rescaled}
v_t + v_x - u_x &= 0,\\
u_t + u_x + (a v^{-\gamma})_x &= \left(\frac{u_x}{v}\right)_x,
\end{split}
\end{equation}
where $a = a_0 \varepsilon^{-\gamma-1} s^{-2}$.  

\subsection{Profile equation}\label{profsec}
Steady shock profiles of \eqref{rescaled} satisfy 
\begin{equation*}
\begin{split}
v' - u' &= 0,\\
u' + (a v^{-\gamma})' &= \left(\frac{u'}{v}\right)',
\end{split}
\end{equation*}
subject to boundary conditions $(v,u)(\pm \infty) =(v_{\pm},u_{\pm})$, or, simplifying,
\begin{equation*}
v' + (a v^{-\gamma})' = \left(\frac{v'}{v}\right)'.
\end{equation*}

Integrating from $-\infty$ to $x$, we get the profile equation
\begin{equation}
\label{profeq}
v' =H(v, v_+):= v(v-1 + a  (v^{-\gamma}-1)),
\end{equation}
where $a$ is found by setting $x=+\infty$, thus yielding the Rankine-Hugoniot condition
\begin{equation}
\label{RH}
a = -\frac{v_+ - 1}{v_+^{-\gamma} - 1} = v_+^\gamma \frac{1-v_+}{1-v_+^\gamma}.
\end{equation}
Evidently, $a\to \gamma^{-1}$
in the weak shock limit $v_+\to 1$, while 
$ a\sim v_+^\gamma $ in the strong shock limit $v_+\to 0$.
The associated Mach number $M$ may be computed as in \cite{BHRZ}, 
Appendix A, as
\begin{equation}\label{mach}
M = (\gamma a)^{-1/2} 
\end{equation}
so that $M \sim \gamma^{-1/2} v_+^{-\gamma/2} \to +\infty$ as $v_+\to 0$
and $M\to 1$ as $v_+\to 1$; {\it that is, the high-Mach number limit
corresponds to the limit $v_+\to 0$.}

\subsection{Eigenvalue equations}\label{eigsec}
Linearizing \eqref{rescaled} about the profile $(\bV,\bU)$, we obtain
the eigenvalue problem
\begin{equation}
\label{eigen1}
\begin{split}
&\lambda v + v' - u' =0,\\
&\lambda u + u' - \left(\frac{h(\bV)}{\bV^{\gamma+1}}v\right)' = \left(\frac{u'}{\bV}\right)',
\end{split}
\end{equation}
where
\begin{equation}
\label{f}
h(\bV) = -\bV^{\gamma+1} + a(\gamma-1) + (a+1) \bV^\gamma.
\end{equation}

We seek nonstable eigenvalues
$\lambda \in \{\R(\lambda) \geq 0\}\setminus\{0\}$, i.e., $\lambda$ for which
\eqref{eigen1} possess a nontrivial
solution $(v,u)$ decaying at plus and minus spatial infinity.
As pointed out in \cite{ZH,HuZ.1}, by divergence form of the equations, 
such solutions necessarily satisfy $\int_{-\infty}^{+\infty}v(x)dx=
\int_{-\infty}^{+\infty}u(x)dx=0$, from which we may deduce that
\[
\tilde{u}(x) = \int_{-\infty}^x u(z) dz , \quad
\tilde{v}(x) = \int_{-\infty}^x v(z) dz
\]
and their derivatives decay exponentially as $x \rightarrow \infty$. 
Substituting and then integrating, we find that
$(\tilde{u},\tilde{v})$ satisfies the {\it integrated
eigenvalue equations} (suppressing the tilde)
\begin{subequations}\label{ep}
\begin{align}
&\lambda v + v' - u' =0, \label{ep:1}\\
&\lambda u + u' -  \frac{h(\bV)}{\bV^{\gamma+1}} v' = \frac{u''}{\bV}.\label{ep:2}
\end{align}
\end{subequations}
This new eigenvalue problem differs spectrally from \eqref{eigen1} only at $\lambda=0$, hence spectral stability of \eqref{eigen1} is equivalent to spectral stability of \eqref{ep}.  Moreover, since \eqref{ep} has no eigenvalue at $\lambda=0$, one can expect more uniform stability estimates for the integrated
equations in the vicinity of $\lambda=0$ \cite{MN,Go.1,ZH}.

\subsection{Preliminary estimates}\label{prelimests}
The following estimates established in \cite{BHRZ} 
indicate the suitability of the rescaling chosen in Section \ref{profsec}.
For completeness, we prove these in Appendix \ref{basicproof}.

\begin{proposition} [\cite{BHRZ}] \label{profdecay}
For each $\gamma\ge 1$, $0<v_+\le 1$, \eqref{profeq} has a unique 
(up to translation) monotone
decreasing solution $\hat v$ decaying to its endstates
with a uniform exponential rate, independent of $v_+$, $\gamma$.
In particular, for $0<v_+\le \frac{1}{12}$ and $\hat v(0):=v_+ + \frac{1}{12}$, 
\begin{subequations}
\label{decaybd}
\begin{align}
|\bV(x)-v_+|&\le \Big(\frac{1}{12}\Big)e^{-\frac{3x} {4}} \quad x\ge 0,\label{decaybd_1}\\
|\bV(x)-v_-|&\le 
\Big(\frac{1}{4}\Big)
e^{\frac{x+12}{2}} \quad x\le 0\label{decaybd_2}.
\end{align}
\end{subequations}
\end{proposition}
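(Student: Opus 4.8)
\medskip\noindent\emph{Proof idea.}
The plan is to exploit that \eqref{profeq} is a \emph{scalar autonomous} ODE, $\bV'=H(\bV,v_+)=\bV\,g(\bV)$ with $g(v):=(v-1)+a(v^{-\gamma}-1)$, so that the whole statement reduces to elementary properties of the single function $g$. One has $g(1)=0$ directly, $g(v_+)=0$ by the Rankine--Hugoniot relation \eqref{RH}, and $g''(v)=a\gamma(\gamma+1)v^{-\gamma-2}>0$, so $g$ is strictly convex. Consequently $g<0$ strictly on $(v_+,1)$ (a strictly convex function vanishing at two points is negative in between), hence $H<0$ there; since the scalar field $H$ vanishes in $[v_+,1]$ only at the endpoints, there is a unique-up-to-translation monotone decreasing connecting orbit. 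Convexity also gives $g'(1)=1-\gamma a=1-M^{-2}>0$ and, since $g<0$ strictly between its zeros, $g'(v_+)<0$, so both endstates are hyperbolic rest points and $|\bV-v_\pm|$ decays exponentially with rates $1-\gamma a$ and $v_+|g'(v_+)|$ respectively; for $v_+\le\tfrac1{12}$, $\gamma\ge1$ one checks $\gamma a\le\tfrac1{11}$ (using $\gamma v_+^\gamma\le v_+$ there), so both rates exceed $\tfrac56$, the general uniform statement being as in \cite{BHRZ}. Finally, and this is what makes \eqref{decaybd_2} work, since $g$ is convex with $g(1)=0$ the difference quotient $g(v)/(v-1)$ is nondecreasing on $(v_+,1)$, hence so is $H(v)/(v-1)=v\cdot g(v)/(v-1)$, being a product of the two nonnegative nondecreasing functions $v$ and $g(v)/(v-1)$.

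For \eqref{decaybd_1}, normalize $\bV(0)=v_++\tfrac1{12}$, so that $\bV(x)\in(v_+,v_++\tfrac1{12}]$ for $x\ge0$; it suffices to prove $g(v)\le-\tfrac{3(v-v_+)}{4v}$ on that interval, since then $\bV'=\bV g(\bV)\le-\tfrac34(\bV-v_+)$ and a Gr\"onwall comparison yields $\bV(x)-v_+\le\tfrac1{12}e^{-3x/4}$. Introduce $\psi(v):=v^\gamma(1-v+a)$; using \eqref{RH} one gets $\psi(v_+)=\psi(1)=a$ and $-v^\gamma g(v)=\psi(v)-\psi(v_+)=(1+a)(v^\gamma-v_+^\gamma)-(v^{\gamma+1}-v_+^{\gamma+1})$. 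With $r:=v_+/v\in(0,1)$ and $S_\gamma(r):=(1-r^\gamma)/(1-r)\in[1,\gamma]$, using the identity $S_{\gamma+1}(r)=1+rS_\gamma(r)$ and $rv=v_+$, the desired inequality reduces, after dividing by $v^\gamma(1-r)$, to $S_\gamma(r)\,(1+a-v_+)-v\ge\tfrac34$, which holds since $S_\gamma(r)\ge1$, $1+a-v_+\ge\tfrac{11}{12}$ and $v\le\tfrac16$.

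For \eqref{decaybd_2}, let $x_*\le0$ be the point with $\bV(x_*)=\tfrac34$ and put $w:=1-\bV$, so $w'=-H(\bV)=w\cdot H(\bV)/(\bV-1)$. For $x\le x_*$ one has $\bV\in[\tfrac34,1)$, where monotonicity of $H/(v-1)$ gives $H(v)/(v-1)\ge H(\tfrac34)/(-\tfrac14)=3|g(\tfrac34)|$; since $|g(\tfrac34)|=\tfrac14-a((\tfrac43)^\gamma-1)$ and $a\le\tfrac{12}{11}v_+^\gamma\le\tfrac{12}{11}(\tfrac1{12})^\gamma$ gives $a((\tfrac43)^\gamma-1)\le\tfrac{12}{11}\big((\tfrac19)^\gamma-(\tfrac1{12})^\gamma\big)\le\tfrac1{33}$ (the bracket is decreasing in $\gamma$ for $\gamma\ge1$), we get $|g(\tfrac34)|\ge\tfrac{29}{132}$, hence $w'\ge\tfrac12 w$ for $x\le x_*$ and thus $w(x)\le w(x_*)e^{(x-x_*)/2}=\tfrac14 e^{(x-x_*)/2}$. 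The crucial remaining step is a uniform bound on the transition width, $-x_*=\int_{v_++1/12}^{3/4}dv/|H(v)|<12$ (in fact comfortably less; the leading value is $\approx\ln 33\approx3.5$): this requires a lower bound for $|H(v)|=v|g(v)|=v\big((1-v)-a(v^{-\gamma}-1)\big)$ uniform in $v_+\in(0,\tfrac1{12}]$, $\gamma\ge1$ on $[v_++\tfrac1{12},\tfrac34]$, obtained by writing $av^{-\gamma}=(1-v_++a)(v_+/v)^\gamma$ with $v_+/v\le\tfrac{12v_+}{12v_++1}\le\tfrac12$, so that the near-singular term is $O(1)$ only near the left endpoint and decays as $v$ grows, and subdividing the interval accordingly. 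Granting this, $w(x)\le\tfrac14 e^{(x+12)/2}$ for $x\le x_*$, while for $x_*\le x\le0$ one combines $w(x)\le w(0)=1-v_+-\tfrac1{12}<\tfrac{11}{12}$ with the reverse inequality $w'\le 3|g(\tfrac34)|\,w\le\tfrac34 w$ (again from monotonicity, since $\bV\le\tfrac34$ there) integrated forward from $x_*$, the generous constants in \eqref{decaybd_2} leaving ample room for both pieces to fit.

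The step I expect to be the main obstacle is exactly this uniform transition-width bound: $|H|$ degenerates linearly at both endstates, and through the factor $av^{-\gamma}$ it behaves over the interior of the transition in a way that is delicately sensitive to the joint limit $v_+\to0$, $\gamma\to\infty$; keeping it bounded below uniformly is where the precise form $a=v_+^\gamma(1-v_+)/(1-v_+^\gamma)$ of the Rankine--Hugoniot relation must be used with care. Everything else is convexity of $g$ together with Gr\"onwall comparison.
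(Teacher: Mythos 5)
Your approach is essentially the paper's, and in fact your key identity is the same: writing $1+a-v_+=\tfrac{1-v_+}{1-v_+^\gamma}$, your relation $-g(v)/(1-v_+/v)=(1+a-v_+)S_\gamma(v_+/v)-v$ is exactly the paper's factorization
\[
\frac{H(v,v_+)}{v-v_+}= v - \Big(\frac{1-v_+}{1-v_+^{\gamma}}\Big)
\Big(\frac{1 - (v_+/v)^{\gamma}}{1 - (v_+/v)}\Big),
\]
and your bound $g(v)\le -3(v-v_+)/(4v)$ is precisely the paper's upper bound $H/(v-v_+)\le v-(1-v_+)\le -3/4$ on $(v_+,v_++\tfrac1{12}]$. (The paper is actually terser: it stops at this factorization, verifies it sandwiches $H/(v-v_+)$ between $v-\gamma$ and $v-(1-v_+)$, and defers the $x\le0$ side to \cite{BHRZ}.) The one step you single out as the main obstacle --- a uniform bound on the transition width $-x_*$ --- is in fact already handled by your own identity without any subdivision: since $S_\gamma\ge1$, $a\ge0$, one has
\[
|H(v)| = (v-v_+)\big[(1+a-v_+)S_\gamma(v_+/v)-v\big]\ge (v-v_+)(1-v_+-v)
\qquad\text{on }[v_++\tfrac1{12},\tfrac34],
\]
so that
\[
-x_*\le\int_{v_++1/12}^{3/4}\frac{dv}{(v-v_+)(1-v_+-v)}
=\frac{1}{1-2v_+}\ln\frac{(3/4-v_+)(11/12-2v_+)}{(1/12)(1/4-v_+)},
\]
which equals $\ln 33\approx3.5$ at $v_+=0$ and $\tfrac65\ln36\approx4.3$ at $v_+=\tfrac1{12}$, so $-x_*<5$ uniformly. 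This margin also matters for the piece you treat somewhat loosely: the comparison $w'\le\tfrac34 w$ on $[x_*,0]$ starting from $w(x_*)=\tfrac14$ needs $3(x-x_*)/4\le(x+12)/2$, i.e.\ $x\le 3x_*+24$, which is automatic for all $x\le0$ only when $-x_*\le8$; with $-x_*<5$ it closes with room to spare, but the bound $-x_*<12$ alone would not suffice for that sub-argument as written.
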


\begin{proposition}[\cite{BHRZ}] \label{hf}
Nonstable eigenvalues $\lambda$ of \eqref{ep}, i.e., eigenvalues
with nonnegative real part, are confined for any $0<v_+\le 1$ 
to the region
\begin{equation}
\label{hfbounds}
\R(\lambda) + |\I(\lambda)| \leq \Big(\sqrt{\gamma}+\frac{1}{2}\Big)^2.
\end{equation}
\end{proposition}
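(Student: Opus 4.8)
The plan is to obtain Proposition~\ref{hf} from a high-frequency energy estimate on the integrated system \eqref{ep}, in the spirit of \cite{MN,HuZ.1,BHRZ}. Two preliminary reductions set things up. From Proposition~\ref{profdecay} one records that $v_{+}\le \bV\le 1$, that $\bV$ is monotone decreasing so $\bV'\le 0$, and that $|\bV'|$ is bounded uniformly in $v_{+}$ and $\gamma$. From the profile equation \eqref{profeq} one rewrites the coefficient appearing in \eqref{f} as
\begin{equation*}
\frac{h(\bV)}{\bV^{\gamma+1}}=a\gamma\,\bV^{-\gamma-1}-\frac{\bV'}{\bV^{2}},
\end{equation*}
and notes that, since $\bV\in[v_{+},1]$ and $0<a\gamma=M^{-2}\le 1$ (cf. \eqref{mach},\eqref{RH}), one has $0\le h(\bV)\le 2$.

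The main step is to take complex $L^{2}(\mathbb{R})$ inner products of \eqref{ep:2} against a $\bV$-weighted conjugate multiplier of $u$, and of \eqref{ep:1} against matching conjugate multipliers of $v$ and of $u$, integrating by parts using the exponential decay of $u,v$ and their derivatives at $\pm\infty$, and adding so as to cancel the indefinite cross term $\int (h(\bV)/\bV^{\gamma+1})\,v'\,\bar u$ (after writing $v'=u'-\lambda v$ via \eqref{ep:1}). Taking real parts then expresses $\R(\lambda)$ times a positive quadratic form in $(u,v)$ as the parabolic term $\int \bV^{-1}|u'|^{2}\ge0$ plus ``good-sign'' contributions of the form $\int a\gamma\,\bV^{-\gamma-2}(-\bV')|u|^{2}\ge 0$ (produced when the $\bV$-weights and the singular factor $\bV^{-\gamma-1}$ are differentiated), minus a collection of mixed and lower-order terms. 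Absorbing these into the positive part by Young's inequality, using $v_{+}\le\bV\le 1$ and the uniform bounds on $h(\bV)$ and $|\bV'|$, yields $\R(\lambda)\le C(\gamma)$ uniformly in $v_{+}$; running the imaginary part of the same identity in parallel yields a companion bound on $|\I(\lambda)|$. Optimizing the Young weights collapses the two estimates into the single sharp bound $\R(\lambda)+|\I(\lambda)|\le(\sqrt{\gamma}+\tfrac{1}{2})^{2}$ (heuristically, the $\sqrt{\gamma}$ reflecting the sound-speed scaling and the $\tfrac{1}{2}$ the first-order transport terms).

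The crux, and the reason the estimate is delicate rather than routine, is that the pressure coefficient $h(\bV)/\bV^{\gamma+1}=a\gamma\bV^{-\gamma-1}-\bV^{-2}\bV'$ is \emph{not} bounded uniformly as $v_{+}\to 0$: near $x=+\infty$, where $\bV\to v_{+}$, it blows up like $\gamma/v_{+}$, so it cannot be handled by simply majorizing it by a constant. The resolution is structural: the large part of this coefficient enters the energy identity only through the term $\int (h(\bV)/\bV^{\gamma+1})\,v'\bar u$, and, after the substitution $v'=u'-\lambda v$ and an integration by parts of $\int (h(\bV)/\bV^{\gamma+1})\,u'\bar u$, the resulting singular contribution is exactly compensated by the good-sign quantity $\tfrac{1}{2}\int a\gamma(\gamma+1)\,\bV^{-\gamma-2}(-\bV')|u|^{2}\ge0$ generated by the very same factor; the remaining piece $\lambda\int (h(\bV)/\bV^{\gamma+1})\,v\bar u$ is then paired against $\lambda\int|u|^{2}$ and closed using the companion $L^{2}$ bound for $v$ coming from \eqref{ep:1}. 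Choosing the weight exponents on the multipliers so that these compensations are precise, and then tracking constants carefully enough to land on the clean value $(\sqrt{\gamma}+\tfrac{1}{2})^{2}$ rather than merely some $C(\gamma)$ independent of $v_{+}$, is exactly what the computation of Appendix~\ref{basicproof} accomplishes.
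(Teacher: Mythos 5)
Your proposal goes in the right general direction --- an energy estimate on \eqref{ep} with a $\bV$-weighted multiplier, followed by Young's inequality and an optimization of weights --- but it misses the two structural points that actually make the paper's argument work, and the compensation mechanism you put in their place is not verified and does not match what the $\bV$-weighting produces.

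First, the paper's key boundedness fact is not about $h(\bV)$ itself but about $h(\bV)/\bV^{\gamma}$: multiplying \eqref{ep:2} by $\bV\bar u$ turns the pressure term into $\int (h(\bV)/\bV^{\gamma})\,v'\bar u$, and Lemma~\ref{Hlem} (proved by checking that $g(\bV):=h(\bV)\bV^{-\gamma}$ is monotone and evaluating at $\bV=v_{+}$ using \eqref{RH}) gives the clean uniform bound $\sup_\bV |h(\bV)/\bV^{\gamma}|\le\gamma$. Your observation $0\le h(\bV)\le 2$ is true but irrelevant to the argument, and your ``crux'' paragraph discusses the unweighted coefficient $h(\bV)/\bV^{\gamma+1}$ blowing up like $\gamma/v_{+}$ as if the $\bV$-weighting had not been applied; once the weight is in place, the coefficient that appears is $h/\bV^\gamma$ and there is no blow-up to compensate. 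The elaborate cancellation you propose (substituting $v'=u'-\lambda v$, integrating $\int(h/\bV^{\gamma+1})u'\bar u$ by parts, and matching the resulting singular terms against a good-sign quantity) is not what the paper does, and you do not verify that the claimed exact compensation actually occurs; as stated it is a hope, not a proof.

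Second, after Young's inequality the paper is left with the term $\theta\int(h(\bV)/\bV^{\gamma+1})|v'|^{2}$, which it controls not by any such cancellation but by a \emph{second}, independent energy identity (Lemma~\ref{kawashima}, equation \eqref{id3}), obtained by multiplying \eqref{ep:2} by $\bar v'$ rather than by a multiple of $\bar u$. That identity shows $\tfrac12\int(h/\bV^{\gamma+1}+a\gamma/\bV^{\gamma+1})|v'|^{2}\le\int|u'|^{2}$, which is what allows the $|v'|^{2}$ term to be absorbed into the $|u'|^{2}$ term with the right constant. This Kawashima-type estimate is essential and is entirely absent from your proposal; without it the Young step leaves an uncontrolled $|v'|^{2}$ contribution. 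Finally, the specific constant $(\sqrt{\gamma}+\tfrac12)^{2}$ is obtained by optimizing $\epsilon$ in the combined inequality $(\R\lambda+|\I\lambda|)<\gamma/(1-\epsilon)+1/(4\epsilon)$, which requires the precise coefficients coming from Lemma~\ref{Hlem} and \eqref{id3}; your sketch gives no route to this value.
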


\subsection{Evans function formulation}\label{evanssec}
Following \cite{BHRZ}, we may express \eqref{ep} concisely
as a first-order system 
\begin{equation}\label{firstorder}
W' = A(x,\lambda) W,
\end{equation}
\begin{equation}
\label{evans_ode}
A(x,\lambda) = \begin{pmatrix}0 & \lambda & 1\\0 & 0 & 1\\ \lambda \bV& \lambda\bV &f(\bV)-\lambda \end{pmatrix},\quad W = \begin{pmatrix} u\\v\\v'\end{pmatrix},\quad \prime = \frac{d}{dx},
\end{equation}
where 
\begin{equation}\label{feq}
f(\bV) = \bV- \bV^{-\gamma} h(\bV)
= 2\bV - a(\gamma-1)\bV^{-\gamma} - (a+1),
\end{equation}
with $h$ as in \eqref{f} and $a$ as in \eqref{RH}, or, equivalently,
\begin{equation}\label{feq2}
f(\bV) = 2\bV - (\gamma-1) 
\Big(\frac{1-v_+}{1-v_+^\gamma}\Big) \Big(\frac{v_+}{\bV}\Big)^{\gamma} 
- \Big(\frac{1-v_+}{1-v_+^\gamma}\Big) v_+^{\gamma} -1.
\end{equation}

Eigenvalues of \eqref{ep} correspond to nontrivial solutions $W$ 
for which the boundary conditions $W(\pm\infty)=0$ are satisfied.  
Because $A(x,\lambda)$ as a function of $\bV$ is asymptotically constant 
in $x$, the behavior near $x=\pm \infty$ of solutions of 
\eqref{evans_ode} is governed by the limiting constant-coefficient
systems
\begin{equation}
\label{apm}
W' = A_\pm(\lambda) W, \qquad
A_\pm(\lambda):=A(\pm \infty,\lambda),
\end{equation}
from which we readily find on the (nonstable) domain $\R \lambda \ge 0$,
$\lambda\ne 0$ of
interest that there is a one-dimensional 
unstable manifold $W_1^-(x)$ of solutions decaying at $x=-\infty$ and 
a two-dimensional stable manifold $W_2^+(x) \wedge W_3^+(x)$ of
solutions decaying at $x=+\infty$, each of which may be chosen 
analytically in $\lambda$.  
With additional care, these may be extended analytically
to the whole set $\R \lambda\ge 0$, i.e., to $\lambda=0$ \cite{GZ}.
Defining the {\it Evans function} $D$ associated with operator $L$ as
the analytic function
\begin{equation}\label{evansdef}
D(\lambda): = \det(W_1^- W_2^+ W_3^+)_{\mid x=0},
\end{equation}
we find that eigenvalues of $L$ correspond to zeroes of $D$ both
in location and multiplicity; see, e.g., \cite{AGJ,GZ,MZ.3,Z.3} 
for further details.

Equivalently,
%
following \cite{PW,BHRZ}, we may express the Evans function as
\begin{equation}\label{adjevans}
D(\lambda)=\big(\widetilde{W}_1^+ \cdot W_1^-\big)_{\mid x=0},
\end{equation}
where $\widetilde{W}_1^+(x)$ spans the one-dimensional 
unstable manifold of solutions decaying at $x=+\infty$
(necessarily orthogonal to the span of 
$W_2^+(x)$ and  $W_3^+(x)$)
of the adjoint eigenvalue ODE
\begin{equation}\label{adjode}
\widetilde{W}' = 
- A(x,\lambda)^*  \widetilde{W}.
\end{equation}
The simpler representation \eqref{adjevans} 
is the one that we shall use here.

\section{Description of the main results}\label{description}

We can now state precisely our main results.

\subsection{Limiting equations}\label{limeqs}
Under the strategic rescaling \eqref{scaling}, 
both profile and eigenvalues equations converge {\it pointwise} as $v_+\to 0$
to limiting equations at $v_+=0$.
The limiting profile equation (the limit of \eqref{profeq}) is evidently
\begin{equation}
\label{limprofeq}
v' =v(v-1),
\end{equation}
with explicit solution
\begin{equation}\label{limprof}
\hat v_0(x)= \frac{1-\tanh (x/2)}{2},
\end{equation}
while the limiting eigenvalue system (the limit of \eqref{evans_ode}) is
\begin{equation}\label{limfirstorder}
W' = A^0(x,\lambda) W,
\end{equation}
\begin{equation}
\label{limevans_ode}
A^0(x,\lambda) = \begin{pmatrix}0 & \lambda & 1\\0 & 0 & 1\\ \lambda \bV_0& \lambda\bV_0 &f_0(\bV_0)-\lambda \end{pmatrix},
\end{equation}
where 
\begin{equation}\label{limfeq}
f_0(\bV_0) = 2\bV_0 - 1= -\tanh(x/2).
\end{equation}

Indeed, this convergence is {\it uniform} on any
interval $\hat v_0\ge \epsilon >0$, or, equivalently,
$x\le L$, for $L$ any positive constant, where
the sequence is therefore a {\it regular perturbation} of its limit.
We will call $x\in (-\infty,L]$ the ``regular region''
or ``regular side''.
For $\hat v_0\to 0$ on the other hand, or $x\to \infty$, the limit
is less well-behaved, as may be seen by the fact that $\partial f/\partial \hat v\sim \hat v^{-1}$ as $\hat v\to v_+$, a consequence of the
appearance of $\big(\frac{v_+}{\hat v}\big)$ in 
the expression \eqref{feq2} for $f$. 
Similarly, in contrast to $\hat v$, $A(x,\lambda)$ does not converge
to $A_+(\lambda)$ as $x\to +\infty$ with uniform exponential rate
independent of $v_+$, $\gamma$, but rather as $C\hat v^{-1}e^{-x/2}$.
We call $x\in [L,+\infty)$ therefore the ``singular region '' or
``singular side''.
(This is not a singular perturbation in the usual sense 
but a weaker type of singularity, at least as we have framed the
problem here.)

\subsection{Limiting Evans function}\label{redEvans}
We should now like to define a limiting Evans function following
the asymptotic Evans function framework introduced in \cite{PZ}
and establish convergence to this function in the $v_+\to 0$ limit,
thus reducing the stability problem as $v_+\to 0$ to the study of 
the (fixed) limiting Evans function.
However, we face certain difficulties due to the (mild) singularity of
the limit, as can be seen even at the first step of defining an Evans
function for the limiting system.

For, the limiting coefficient matrix 
\begin{equation}
\label{lim+}
A^0_+(\lambda) :=A^0(+\infty, \lambda)=
\begin{pmatrix}0 & \lambda & 1\\0 & 0 & 1\\ 0& 0 & -1-\lambda \end{pmatrix}
\end{equation}
is nonhyperbolic (in ODE sense) for all $\lambda$, 
having eigenvalues $0,0,-1-\lambda$; in particular,
the stable manifold drops to dimension one in the limit $v_+\to 0$.
Thus, the subspace in which $W_2^+$ and $W_3^+$ should be initialized
at $x=+\infty$ is not self-determined by \eqref{lim+}, 
but must be deduced by a careful 
study of the double limit $v_+\to 0$, $x\to +\infty$.
But, the computation
%
%
\begin{equation}
\label{Alim+}
\lim_{v_+\to 0}A(+\infty, \lambda)=
\begin{pmatrix}0 & \lambda & 1\\0 & 0 & 1\\ 0& 0 & -\gamma -\lambda \end{pmatrix}
\ne A^0_+(\lambda)=\lim_{x\to \infty}\lim_{v_+\to 0}A(x,\lambda)
\end{equation}
shows that these limits do not commute, except in the special case $\gamma=1$
already treated in \cite{MN} by other methods.

The rigorous treatment of this issue is the main work of the paper.
However, the end result can be easily motivated on heuristic grounds.
A study of $\lim_{v_+\to 0}A(+\infty, \lambda)$ on the
set $\R \lambda\ge 0$ of interest
reveals that eigenmodes decouple into a single ``fast'' (stable subspace)
decaying mode $(*,*,1)^T$ associated with eigenvalue $-\gamma-\lambda$
of strictly negative real part and a two-dimensional (center) subspace
of neutral modes $(r,0)^{T}$ associated with Jordan block
$
\begin{pmatrix} 0 & \lambda\\ 0 & 0 \end{pmatrix},
$
of which there is only a single genuine eigenvector $(1,0,0)^T$.
For $v_+$ small, therefore, $A_+(\lambda)$ has also a single
fast, decaying, eigenmode with eigenvalue near $-\gamma-1$ 
and two slow eigenmodes with eigenvalues near zero, one 
decaying and the other growing (recall, Section \ref{evanssec},
that the stable subspace of $A_+$ has dimension two
for $\R \lambda\ge 0$, $\lambda\ne 0$
and the unstable subspace dimension one).

Focusing on the single slow decaying eigenvector of $A_+$,
and considering its limiting behavior as $v_+\to 0$,
we see immediately that it must converge in direction to $\pm(1,0,0)^T$.
For, the sequence of direction vectors, since continuously varying
and restricted to a compact set, has
a nonempty, connected set of accumulation points, and these 
must be eigenvectors of $\lim_{v_+\to 0}A_+$ with eigenvalues near zero.
Since $\pm(1,0,0)^T$ are the unique candidates, we obtain the result.
Indeed, both growing and decaying slow eigenmodes must converge to this 
common direction, making the limiting analysis trivial.

The same argument shows that $\pm(1,0,0)^T$ is the limiting
direction of the slow stable eigenmode of $ A^0(x,\lambda)$
as $x\to +\infty$, that is, in the alternate limit
$\lim_{x\to \infty}\lim_{v_+\to 0}A(x,\lambda)$.
That is, $V_2^+:=(1,0,0)^T$ is the common limit of the
slow decaying eigenmode in either of the two alternative limits 
$\lim_{v_+\to 0}A_+$ and $\lim_{v_+\to 0}A_+$; 
it thus seems a reasonable choice to use this limiting slow direction 
to define an Evans function for the limiting system \eqref{limevans_ode}.
On the other hand, the stable eigenmode
$$V_3:=(a^{-1}(\lambda/a+1), a^{-1}, 1)^T, 
$$ 
$a=-1-\lambda$, of $A^0_+$ is forced on us by the system itself,
independent of the limiting process.

Combining these two observations,
we require that solutions $W^{0+}_2$ and $W^{0+}_3$ of the limiting eigenvalue
system \eqref{limevans_ode} lie asympotically in directions
$V_2$ and $V_3$, respectively, thus determining a limiting, or
``reduced'' Evans function 
\begin{equation}\label{limD}
D^0(\lambda):=
\det(W_1^{0-} W_2^{0+} W_3^{0+})_{\mid x=0},
\end{equation}
or alternatively
\begin{equation}\label{duallimD}
D^0(\lambda)=\big(\widetilde{W}_1^{0+} \cdot W_1^{0-}\big)_{\mid x=0},
\end{equation}
with $\widetilde{W}_1^{0+}$ defined analogously as a solution of
the adjoint limiting system 
lying asymptotically at $x=+\infty$ in direction
\begin{equation}\label{tildeV}
\widetilde V_1:= 
(0, 1, \bar a^{-1} )^T=
(0, 1, (1 +\bar \lambda)^{-1})^T
\end{equation}
orthogonal to the span of $V_2$ and $V_3$,
where `` $\bar{ }$ '' denotes complex conjugate.
(The prescription of $W_1^{0-}$ in the regular region is straightforward:
it must lie on the one-dimensional unstable manifold of $A_-^0$ as in the
$v_+>0$ case.)

\subsection{Physical interpretation}\label{phys}
Alternatively, the limiting equations may be derived by
taking a formal limit as $v_+\to 0$ of the rescaled equations
\eqref{rescaled}, recalling that $a\sim v_+^\gamma$, to obtain
a {\it limiting evolution equation}
\begin{equation}
\begin{split}
\label{pressureless}
v_t + v_x - u_x &= 0,\\
u_t + u_x  &= \left(\frac{u_x}{v}\right)_x
\end{split}
\end{equation}
corresponding to a {\it pressure-less gas}, or $\gamma=0$,
then deriving profile and eigenvalue equations from \eqref{pressureless}
in the usual way.
This gives some additional insight on behavior, of which
we make important mathematical use in Appendix \ref{nonvanishing}.
Physically, it has the interpretation that, in the high-Mach number
limit $v_+\to 0$, effects of pressure are concentrated near
$x=+\infty$ on the infinite-density side, as encoded
in the special upstream boundary condition $(u,u',v,v')\to c(1,0,0,0)$
as $x\to +\infty$ for the integrated eigenvalue equation,
which may be seen to be equivalent to the conditions imposed
on $W_j^+$ in the previous subsection.

\subsection{Analytical results}\label{results}
Defining $D^0$ as in \eqref{limD}--\eqref{duallimD},
we have the following main theorems.

\begin{theorem}\label{mainthm}
For $\lambda$ in any compact subset 
of $\R \lambda \ge 0$,
$D(\lambda)$ converges uniformly to $D^0(\lambda)$
as $v_+\to 0$.
\end{theorem}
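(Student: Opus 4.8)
\medskip\noindent\emph{Proof proposal.}

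The plan is to work throughout with the adjoint representations \eqref{adjevans} and \eqref{duallimD}, and to establish, after a common ($v_+$- and $\lambda$-dependent) normalization, the two convergences $W_1^-(0)\to W_1^{0-}(0)$ and $\widetilde{W}_1^+(0)\to\widetilde{W}_1^{0+}(0)$ as $v_+\to 0$, uniformly for $\lambda$ in a fixed compact subset $K$ of $\{\R \lambda\ge 0\}$; the pairing $D(\lambda)=(\widetilde{W}_1^+\cdot W_1^-)_{\mid x=0}$ will then converge uniformly on $K$ to $D^0(\lambda)=(\widetilde{W}_1^{0+}\cdot W_1^{0-})_{\mid x=0}$. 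We fix $L>0$ separating the regular region $(-\infty,L]$ from the singular region $[L,+\infty)$ as in Section~\ref{limeqs}, noting (Proposition~\ref{profdecay}) that $\bV(L)\to\hat v_0(L)=:\epsilon_0>0$.

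On the regular region, $A(x,\lambda)$ of \eqref{evans_ode} converges to $A^0(x,\lambda)$ of \eqref{limevans_ode} uniformly in $x\le L$ and $\lambda\in K$, and by Proposition~\ref{profdecay} both families approach their limits $A_\pm(\lambda)$, $A^0_\pm(\lambda)$ at a uniform exponential rate as $x\to-\infty$, with $A_-(\lambda)\to A^0_-(\lambda)$. Since $A_-(\lambda)$ has a one-dimensional unstable eigenspace depending analytically on $\lambda$ on all of $\{\R \lambda\ge 0\}$ --- including $\lambda=0$, where $A_-(0)$ is nonhyperbolic and one appeals to the gap lemma of \cite{GZ} --- the conjugation lemma (cf.\ \cite{MZ.3,PZ}) will supply a normalization of $W_1^-$ with $W_1^-(\cdot)\to W_1^{0-}(\cdot)$ uniformly on $(-\infty,L]$ and on $K$; in particular at $x=0$ and $x=L$. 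Running the same argument on the \emph{compact} interval $[0,L]$ for the adjoint system \eqref{adjode} --- where only continuous dependence on $v_+$ is at issue --- will show that once $\widetilde{W}_1^+(L)\to\widetilde{W}_1^{0+}(L)$ is known, $\widetilde{W}_1^+(0)\to\widetilde{W}_1^{0+}(0)$ follows automatically. Everything thus reduces to propagating $\widetilde{W}_1^+$ across the singular region.

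This last step --- tracking the solution $\widetilde{W}_1^+$ of $\widetilde{W}'=-A(x,\lambda)^*\widetilde{W}$ that decays at $x=+\infty$ down to $x=L$, and identifying its limit there --- is the crux, and is where the obstacle lies. Three features block a routine argument: by \eqref{Alim+} the limits $v_+\to0$ and $x\to+\infty$ of $A$ do not commute; the convergence $A\to A^0$ on $[L,\infty)$ is only pointwise, with error $\sim C\bV^{-1}e^{-x/2}$ that is not uniformly integrable as $v_+\to0$; and for $v_+>0$ the slow eigenvalue pair of $A_+(\lambda)$ collapses to $0$, so the adjoint mode being followed decays near $x=+\infty$ only at a rate vanishing with $v_+$. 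Standard tracking lemmas therefore fail on $[L,\infty)$, and the plan is to exploit the explicit structure of \eqref{evans_ode} directly, within the asymptotic Evans function framework of \cite{PZ}, as follows. (i) Pass to the inner variable $\eta:=\bV/v_+$, which is monotone in $x$ by \eqref{profeq} and ranges over $[1,\epsilon_0/v_+]$; writing $a=v_+^\gamma\alpha$ with $\alpha:=(1-v_+)/(1-v_+^\gamma)\to 1$, so that the singular combination $a\bV^{-\gamma}=\alpha\eta^{-\gamma}$ stays $O(1)$, one finds that as $v_+\to0$ the profile equation tends to the ``infinitely compressible'' inner profile $\eta'=\eta(\eta^{-\gamma}-1)$ and the $\eta$-form of the coefficient matrix acquires a well-defined limit, with $f(\bV)\to -1-(\gamma-1)\eta^{-\gamma}$, interpolating between $-\gamma$ at $\eta=1$ and $-1$ as $\eta\to+\infty$. (ii) In the $\eta$-variable, split off the strongly decaying (equivalently, backward-contracting for the adjoint) fast eigendirection by an explicit, uniformly bounded transformation, reducing to the two-dimensional slow block, whose coefficient matrix --- unlike $A$ itself --- has a uniform $v_+\to0$ limit. (iii) Estimate or integrate this slow block by hand to show that, uniformly in $v_+$ and $\lambda\in K$, the adjoint decaying mode stays close, as $\eta\to+\infty$, to the direction orthogonal to $V_2=(1,0,0)^T$ and $V_3$, i.e.\ to $\widetilde{V}_1$ of \eqref{tildeV}. (iv) Match at $\eta=\epsilon_0/v_+$ (equivalently $x=L$) with the limiting adjoint solution $\widetilde{W}_1^{0+}$, which by construction (Section~\ref{redEvans}) is the solution of the limiting adjoint system asymptotic to $\widetilde{V}_1$ at $x=+\infty$ and which is well defined --- despite the nonhyperbolicity of $A^0_+(\lambda)$ in \eqref{lim+} --- by the gap lemma together with the extra exponential decay carried by the limiting profile $\hat v_0$. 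Carrying out (i)--(iv) yields $\widetilde{W}_1^+(L)\to\widetilde{W}_1^{0+}(L)$ uniformly on $K$. The boundary-layer accounting in (ii)--(iii) --- controlling the slow block over the long interval $\eta\in[1,\epsilon_0/v_+]$ with constants independent of $v_+$, and verifying that the $O(v_+)$ and $O((v_+/\eta)^\gamma)$ perturbations accumulate acceptably (which may well force one to track a suitable scalar ratio of components, or to insert an intermediate matching region, rather than apply a crude Gronwall estimate) --- is where essentially all the real work lies, and is the main difficulty.

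Finally, combining the two regions gives $W_1^-(0)\to W_1^{0-}(0)$ and $\widetilde{W}_1^+(0)\to\widetilde{W}_1^{0+}(0)$ uniformly on $K$, and pairing at $x=0$ yields $D(\lambda)\to D^0(\lambda)$ uniformly on $K$. (Alternatively, once pointwise convergence on $\{\R \lambda\ge 0\}$ is established, local uniformity is automatic from analyticity of $D$ and $D^0$ together with the uniform-in-$v_+$ local bound on $D$ furnished by the above estimates, via Vitali's theorem.)
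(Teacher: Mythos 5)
Your plan coincides with the paper's strategy at the structural level: work with the adjoint representation \eqref{adjevans}--\eqref{duallimD}, split at $x=L$ into regular and singular regions, handle the regular region by conjugation/continuous-dependence (this is Lemma~\ref{regconj}), reduce the problem on the singular region to proving that the normalized adjoint mode satisfies $\widetilde W_1^+(L)\to \widetilde V_1$ uniformly in $\lambda$ (this is Lemma~\ref{matching} and Corollary~\ref{matching2}), and then take $L\to\infty$ after $v_+\to 0$ when pairing at $x=0$. You have also correctly located the crux: the fast/slow decoupling on $[L,\infty)$ with constants independent of $v_+$, in the face of noncommuting limits \eqref{Alim+} and an $O(\hat v^{-1}e^{-x/2})$ error in $A-A_+$.

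Where the proposal falls short is precisely there. Your steps (ii)--(iii) are a description of the goal rather than a proof of it, and the two fixes you float are not worked out. The paper's resolution (Section~\ref{singular}) is a two-stage triangularization that never leaves the $x$-variable: a \emph{static} lower-triangular conjugation $R_+$ with a small block $\lambda v_+\theta_+$ (Lemma~\ref{pretranslem}), followed by a \emph{dynamic} conjugation $\tilde R=\begin{pmatrix} I & 0\\ \tilde\Theta & 1\end{pmatrix}$ where the row vector $\tilde\Theta(x)$ solves the exact Riccati-type matrix ODE \eqref{mat}, initialized by $\tilde\Theta(L)=0$ and estimated by Duhamel plus contraction (Lemma~\ref{dyntranslem}) to obtain the uniform bound $|\tilde\Theta|\le Ce^{-\eta x}$ \emph{independent of $v_+$}. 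As Remark~\ref{delicate} stresses, this exactness is essential: the quasi-static version (dropping $\tilde\Theta'$), which is what a generic diagonalization lemma from \cite{MZ.3,PZ} would give, produces errors of order $O(|\hat v|^{-1}|\hat v-v_+|)$ that do not decay uniformly. Your ``track a scalar ratio of components'' is pointing in roughly the right direction --- a Riccati reduction is morally what $\tilde\Theta$ is --- but without the Duhamel/contraction estimate the claimed uniform exponential decay of the slow-block perturbation (and hence the applicability of the Conjugation Lemma, Proposition~\ref{conjugation}, to the slow block) is unsupported. Your inner-variable change $\eta=\hat v/v_+$ is not used in the paper and, while it does regularize $f$, it does not by itself resolve the accumulation of errors over the long singular interval; the paper's argument shows it is not needed. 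In short: same skeleton, but the theorem is genuinely proved only once Lemma~\ref{dyntranslem} (and its corollary, Lemma~\ref{matching}) is supplied, and that is exactly what the proposal leaves open.
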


\begin{corollary}\label{redcor}
For any compact subset $\Lambda$
of $\R \lambda \ge 0$,
$D$ is nonvanishing on $\Lambda$ for $v_+$ sufficiently small
if $D^0$ is nonvanishing on $\Lambda$, and is nonvanishing on
the interior of $\Lambda$ only if $D^0$ is nonvanishing there.
\end{corollary}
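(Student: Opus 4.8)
The plan is to deduce the corollary from Theorem \ref{mainthm} together with elementary properties of uniform limits of analytic functions (Hurwitz's theorem). First I would record that each $D(\lambda)$ and $D^0(\lambda)$ is analytic on a neighborhood of $\R\lambda\ge 0$ (from the analytic choice of the spanning solutions $W_1^-$, $W_2^+$, $W_3^+$ and their limiting counterparts, as discussed in Section \ref{evanssec}--\ref{redEvans}), so that the convergence $D\to D^0$ given by Theorem \ref{mainthm}, being uniform on every compact subset of $\R\lambda\ge 0$, is in particular uniform on any fixed compact $\Lambda$ and on compact neighborhoods of its points.

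For the ``if'' direction: suppose $D^0$ is nonvanishing on the compact set $\Lambda$. Then $m:=\min_{\lambda\in\Lambda}|D^0(\lambda)|>0$ by continuity and compactness. By Theorem \ref{mainthm} there is $\delta>0$ such that $|D(\lambda)-D^0(\lambda)|<m/2$ for all $\lambda\in\Lambda$ whenever $v_+<\delta$; hence $|D(\lambda)|\ge |D^0(\lambda)|-|D(\lambda)-D^0(\lambda)|>m/2>0$ on $\Lambda$, so $D$ is nonvanishing on $\Lambda$ for $v_+$ sufficiently small.

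For the ``only if'' direction, I would argue by contraposition: suppose $D^0$ has a zero $\lambda_*$ in the interior of $\Lambda$. Choose a closed disk $\overline{B(\lambda_*,\rho)}\subset \mathrm{int}\,\Lambda$ on which $\lambda_*$ is the only zero of $D^0$ (possible since zeros of the analytic function $D^0$ are isolated, unless $D^0\equiv 0$, a degenerate case one notes can be excluded or handled directly). On the circle $\partial B(\lambda_*,\rho)$ we have $D^0$ bounded away from zero, so by the uniform convergence of Theorem \ref{mainthm} and Rouch\'e's theorem (equivalently Hurwitz's theorem), for $v_+$ small enough $D$ has a zero inside $B(\lambda_*,\rho)\subset\mathrm{int}\,\Lambda$. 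Thus $D$ vanishes in the interior of $\Lambda$ for all sufficiently small $v_+$, which is the contrapositive of the claimed implication.

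The only genuine content here is Theorem \ref{mainthm}; the corollary is a soft consequence. The main point requiring a word of care is the analyticity and local-uniform convergence needed to invoke Hurwitz/Rouch\'e — i.e.\ that the convergence is not merely pointwise but uniform on compacta, which is exactly what Theorem \ref{mainthm} supplies — together with the harmless observation that $D^0\not\equiv 0$ (or, if one prefers, that the statement is vacuous on that component otherwise). I expect no real obstacle beyond citing these standard complex-analytic facts.
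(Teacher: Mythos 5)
Your argument is correct and is exactly the standard complex-analytic argument (uniform convergence on compacta plus Hurwitz/Rouch\'e) that the paper invokes with the one-line proof ``Standard properties of uniform limit of analytic functions.'' You have simply spelled out the details; there is no substantive difference in approach.
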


\begin{proof}
Standard properties of uniform limit of analytic functions.
\end{proof}

\begin{corollary}\label{stabcor}
Isentropic Navier--Stokes shocks are stable in
the high-Mach number limit $v_+\to 0$ if 
$D^0$ is nonvanishing on the wedge
\begin{equation}\label{Lambda}
\Lambda: \R(\lambda) + |\I(\lambda)| \leq \Big(\sqrt{\gamma}+\frac{1}{2}\Big)^2,
\quad \R \lambda \ge 0
\end{equation}
and only if $D^0$ is nonvanishing on the interior of $\Lambda$.
\end{corollary}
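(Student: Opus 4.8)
The plan is to deduce Corollary \ref{stabcor} by combining the convergence result of Theorem \ref{mainthm} (via Corollary \ref{redcor}) with the uniform high-frequency bound of Proposition \ref{hf}, which confines all nonstable eigenvalues of \eqref{ep} for every $0<v_+\le 1$ to the wedge $\Lambda$ defined in \eqref{Lambda}. The key observation is that $\Lambda$ is a \emph{fixed} compact subset of $\{\R\lambda\ge 0\}$, independent of $v_+$, so Corollary \ref{redcor} applies to it verbatim.

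First I would invoke Proposition \ref{hf} to reduce spectral stability of the shock for given $v_+$ to nonvanishing of $D(\lambda)$ on $\Lambda$: since eigenvalues of $L$ correspond (in location and multiplicity) to zeroes of the Evans function $D$, and since any nonstable eigenvalue $\lambda\ne 0$ must lie in $\Lambda$, while $\lambda=0$ is excluded because the integrated equations \eqref{ep} have no eigenvalue there, spectral stability is equivalent to $D$ having no zero on $\Lambda$. By the refined Lyapunov theorem of \cite{MZ.4,Z.3} quoted in the introduction, spectral stability is in turn equivalent to linearized and nonlinear orbital stability, which is the notion of ``stable'' in the statement.

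Next I would apply the first half of Corollary \ref{redcor} with $\Lambda$ as above: if $D^0$ is nonvanishing on $\Lambda$, then $D$ is nonvanishing on $\Lambda$ for all $v_+$ sufficiently small, hence the shock is spectrally — and therefore nonlinearly orbitally — stable for all $v_+$ sufficiently small, i.e.\ in the high-Mach number limit. For the ``only if'' direction I would apply the second half of Corollary \ref{redcor}: if $D^0$ \emph{does} vanish somewhere in the interior of $\Lambda$, then $D$ cannot be nonvanishing on the interior of $\Lambda$ for $v_+$ small, so instability (presence of a nonstable eigenvalue) occurs for a sequence $v_+\to 0$; this gives the contrapositive of the ``only if'' claim.

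There is essentially no obstacle here — the corollary is a bookkeeping consequence of the two propositions it cites. The only point requiring a word of care is the matching of the two notions of ``eigenvalue'': one must note that the bound \eqref{hfbounds} of Proposition \ref{hf} is stated for the \emph{integrated} system \eqref{ep}, whose spectral stability was already observed in Section \ref{eigsec} to be equivalent to that of the original system \eqref{eigen1} (they differ only at $\lambda=0$), and that $D$ in \eqref{evansdef}--\eqref{adjevans} is the Evans function of exactly this integrated operator. Granting that identification, the conclusion is immediate, and the only genuine content deferred to later in the paper is the verification — done numerically as in \cite{BHRZ} or analytically in Appendix \ref{nonvanishing} — that $D^0$ is in fact nonvanishing on $\Lambda$.
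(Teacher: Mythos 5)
Your proposal is correct and follows essentially the same route as the paper, whose entire proof is the one-line remark ``Corollary \ref{redcor} together with Proposition \ref{hf}.'' You have simply unpacked what that one line presupposes: that $\Lambda$ is a $v_+$-independent compact subset of $\{\R\lambda\ge 0\}$ to which Corollary \ref{redcor} applies, that Proposition \ref{hf} confines any nonstable eigenvalue there, that zeros of $D$ detect eigenvalues of the integrated system \eqref{ep} (equivalent to \eqref{eigen1} away from $\lambda=0$), and that spectral stability passes to nonlinear orbital stability via \cite{MZ.4,Z.3} — all of which is standing background in the paper rather than new content of this corollary.
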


\begin{proof}
Corollary \ref{redcor} together with Proposition \ref{hf}.
\end{proof}

\begin{remark}\label{uni}
Likewise, on any compact subset of $\R \lambda \ge 0$, 
$|D|$ is uniformly bounded from zero
for $v_+$ sufficiently small ($M$ sufficiently large)
if and only if $|D^0|$ is uniformly bounded from zero.
Thus, isentropic Navier--Stokes shocks are ``uniformly stable''
for sufficiently small $v_+$, in the sense that $|D|$ is bounded
from below independent of $v_+$,
if and only if $D^0$ is nonvanishing on $\Lambda$ as defined in \eqref{Lambda}.
\end{remark}

The following result completes our abstract stability analysis.
The proof, given in Appendix \ref{nonvanishing},
is by an energy estimate analogous to that of \cite{MN}.\footnote{
Stability for $\gamma=1$, proved in \cite{MN},
already implies nonvanishing of $D^0$ outside the 
imaginary interval $[-i\sqrt{3/2}, +i\sqrt{3/2}]$, by Corollary \ref{stabcor}.}

\begin{proposition}\label{redenergy}
The limiting Evans function $D^0$ is nonzero on $\R \lambda \ge 0$.
\end{proposition}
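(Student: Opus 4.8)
The plan is to show that the limiting (pressureless) eigenvalue problem admits no nonstable solution, which by the determinant definition \eqref{limD} is the same as $D^0\ne 0$ on $\R \lambda\ge 0$. Explicitly, $D^0(\lambda)=0$ says that the limiting system \eqref{limfirstorder} has a nontrivial solution $W=(u,v,v')^T$ decaying as $x\to-\infty$ and, as $x\to+\infty$, asymptotic to the subspace $\mathrm{span}\{V_2,V_3\}$; by the physical interpretation of Section \ref{phys} this last condition is exactly that $(u,u',v,v')\to c(1,0,0,0)$ as $x\to+\infty$ for some constant $c$, where $(u,v)$ solves the limiting integrated eigenvalue equations, namely \eqref{ep} with $\bV$ replaced by $\bV_0$ and $h(\bV)/\bV^{\gamma+1}$ by its pointwise limit $(1-\bV_0)/\bV_0=1-f_0(\bV_0)/\bV_0$. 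So the goal reduces to ruling out such a nontrivial $(u,v)$ for every $\lambda$ with $\R \lambda\ge 0$.

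The argument is a weighted energy estimate, carried out on the integrated equations, in the spirit of Matsumura--Nishihara \cite{MN}. A useful preliminary is the scalar identity obtained by integrating the limiting form of \eqref{ep:1} over $\mathbb{R}$, namely $\lambda\int_{\mathbb{R}}v\,dx=c$, which ties the upstream constant $c$ to $v$. The main step is then to multiply the limiting form of \eqref{ep:2} by $\bV_0\bar u$ (which clears the coefficient singularity $\bV_0^{-1}$) and the limiting form of \eqref{ep:1} by a suitable real weight times $\bar v$, integrate over $\mathbb{R}$, use \eqref{ep:1} to trade $v'$ for $u'-\lambda v$, add, and take real parts. Integrations by parts produce the profile factors $\bV_0'=\bV_0(\bV_0-1)$ and $f_0'=-\tfrac12\cosh^{-2}(x/2)$ from \eqref{limprofeq} and \eqref{limfeq}, each of one sign on $\mathbb{R}$, together with boundary contributions at $x=\pm\infty$: those at $-\infty$ vanish by exponential decay, and those at $+\infty$ --- which are finite multiples of $|c|^2$, coming from the fact that $u\to c$ rather than $0$ there --- are precisely the terms that the factor $\bV_0\to 0$ in the multiplier keeps under control. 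Choosing the mass-equation weight so that the a priori indefinite cross term $\R\big(\lambda\int_{\mathbb{R}}(1-\bV_0)v\bar u\,dx\big)$ is absorbed, one is left with an identity of the schematic form
\[
\R \lambda\, Q_1+|\lambda|^2 Q_2+\int_{\mathbb{R}}|u'|^2\,dx+\int_{\mathbb{R}}(-f_0')|u|^2\,dx+\alpha|c|^2=0,
\]
with $Q_1,Q_2\ge 0$ quadratic in $(u,v)$ and $\alpha$ a fixed constant. Provided $\alpha\ge 0$, every term is nonnegative for $\R \lambda\ge 0$, forcing $u'\equiv 0$ and, since $-f_0'>0$ everywhere, $u\equiv 0$; then $c=0$, and feeding $u\equiv 0$ back into \eqref{ep:1} gives $v\equiv 0$ (immediately when $\lambda=0$, since then $v'=0$ and $v\to 0$ at $-\infty$; from $\lambda v\equiv 0$ otherwise). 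Hence $W\equiv 0$, a contradiction, so $D^0(\lambda)\ne 0$.

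The crux --- and the one place the structure specific to \eqref{psystem} enters, just as in \cite{MN} --- is arranging all the profile-dependent coefficients, and in particular the coefficient $\alpha$ of $|c|^2$ left after cancellation, to have the right sign; finding the mass-equation weight that does this while keeping the $\bV_0^{-1}$ blow-up at $x=+\infty$ harmless is the delicate part, and the rest is routine bookkeeping. As the footnote records, the $\gamma=1$ result of \cite{MN} together with Corollary \ref{stabcor} already forces $D^0$ to be nonvanishing on part of $\R \lambda\ge 0$, which both indicates that such an estimate must close and, if one prefers, lets one restrict the estimate to the complementary subregion.
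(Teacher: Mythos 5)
Your strategy is the right one --- a Matsumura--Nishihara-type spectral energy estimate on the integrated pressureless eigenvalue equations --- and you correctly identify that the crux is the choice of multiplier/weight. But you leave exactly that crux unresolved, and the schematic identity you guess at does not match what actually comes out, so this is a genuine gap rather than a proof sketch.

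The paper's multiplier is not $\bV_0\bar u$ but $\dfrac{\bV_0}{1-\bV_0}\,\bar u$. This choice has two algebraic miracles that your version lacks. First, since $\bV_0'=\bV_0(\bV_0-1)$ gives $\bigl(\tfrac{1}{1-\bV_0}\bigr)'=-\tfrac{\bV_0}{1-\bV_0}$, the profile-dependent coefficient $g(\bV_0)$ that would multiply $|u|^2$ after integration by parts vanishes \emph{identically}; no $\int(-f_0')|u|^2$ term appears and no auxiliary mass-equation weight is needed to absorb a cross term. Second, the boundary terms at $x=+\infty$ all vanish, because the bracket is $\R(v\bar u)+\tfrac{\R(u'\bar u)}{1-\bV_0}-\tfrac{|v|^2}{2}$ and at $+\infty$ one has $v,u'\to0$, $1-\bV_0\to1$, $u\to c$ bounded. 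So there is no $\alpha|c|^2$ term to sign at all, and no $|\lambda|^2 Q_2$ term either; the identity is simply
$$
\R(\lambda)\int_{\mathbb R}\Bigl(\tfrac{\bV_0}{1-\bV_0}|u|^2+|v|^2\Bigr)\,dx+\int_{\mathbb R}\tfrac{|u'|^2}{1-\bV_0}\,dx=0,
$$
after which $u'\equiv0$, $u\equiv0$ by $u(-\infty)=0$, and $v\equiv0$ from the mass equation. Your version, by contrast, produces the cross term $\R\bigl(\lambda\int(1-\bV_0)v\bar u\bigr)$ which you propose to cancel with a weighted copy of the mass equation and a boundary term $\alpha|c|^2$ whose sign you do not determine; you write ``provided $\alpha\ge0$'' and ``finding the mass-equation weight that does this \ldots is the delicate part,'' i.e.\ you explicitly defer the step that constitutes the entire difficulty. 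Until that weight is exhibited and $\alpha\ge0$ is verified, the argument does not close. One further small point: the more delicate boundary contribution is actually at $x=-\infty$, where $1-\bV_0\to0$ like $e^x$; the paper handles this by noting the unique decaying mode of $A^0_-$ has growth rate $\R\mu>1/2$ for $\R\lambda\ge0$, so $u'\bar u/(1-\bV_0)\to0$. Your sketch does not address this.
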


\begin{corollary}\label{maincor}
For any $\gamma\ge 1$, isentropic Navier--Stokes shocks 
are stable for Mach number $M$ sufficiently large
(equivalently, $v_+$ sufficiently small).
\end{corollary}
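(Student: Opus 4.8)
The plan is to obtain Corollary~\ref{maincor} as an essentially immediate consequence of Corollary~\ref{stabcor} together with Proposition~\ref{redenergy}, the real content having been displaced into Theorem~\ref{mainthm} and the energy estimate behind Proposition~\ref{redenergy}. First I would recall what ``stable'' means here: by the refined Lyapunov-theorem framework of Mascia--Zumbrun \cite{MZ.4,Z.3} quoted in the Introduction, linearized and nonlinear $L^1\cap H^3\to L^1\cap H^3$ orbital stability of the viscous profile $(\bV,\bU)$ is equivalent to spectral stability, i.e.\ to nonexistence of eigenvalues of the linearized operator $L$ in $\{\R\lambda\ge 0\}\setminus\{0\}$. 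Passing to the integrated equations \eqref{ep}, whose spectrum agrees with that of \eqref{eigen1} off $\lambda=0$ and which have no eigenvalue at $\lambda=0$, spectral stability is equivalent to nonvanishing of the Evans function $D$ on the full closed half-plane $\R\lambda\ge 0$.

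Now fix $\gamma\ge 1$. By Proposition~\ref{hf}, every nonstable eigenvalue of \eqref{ep} lies in the compact wedge $\Lambda$ of \eqref{Lambda}, uniformly in $v_+\in(0,1]$; hence it suffices to rule out zeroes of $D$ on $\Lambda$. By Proposition~\ref{redenergy} the limiting Evans function $D^0$ is nonzero on all of $\R\lambda\ge 0$, in particular it is nonvanishing on the compact set $\Lambda$. Corollary~\ref{stabcor} --- that is, Corollary~\ref{redcor} applied on $\Lambda$, combined with the a priori confinement of Proposition~\ref{hf} --- then yields that $D$ is nonvanishing on $\Lambda$, and therefore on the whole of $\R\lambda\ge 0$, once $v_+$ is sufficiently small, which is precisely the spectral stability we want. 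Since $M=(\gamma a)^{-1/2}\to\infty$ as $v_+\to 0$ by \eqref{mach}, ``$v_+$ sufficiently small'' is the same as ``$M$ sufficiently large,'' completing the proof.

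I do not expect any obstacle in the corollary itself; the genuine difficulties sit upstream. The hard part is Theorem~\ref{mainthm}'s uniform convergence $D\to D^0$ on compact subsets of $\R\lambda\ge 0$, and in particular the matching across $x=L$ between the regular region, where the eigenvalue system is a regular perturbation of \eqref{limevans_ode}, and the singular region $x\ge L$, where $A(x,\lambda)$ tends to $A_+(\lambda)$ only at the nonuniform rate $C\hat v^{-1}e^{-x/2}$ and where the noncommuting limits in \eqref{Alim+} force the boundary-layer analysis that pins down the correct initializing directions $V_2$, $V_3$ (and $\widetilde V_1$) at $x=+\infty$. A minor but essential bookkeeping point is that the wedge $\Lambda$ in \eqref{Lambda} depends only on $\gamma$ and not on $v_+$, so for each fixed $\gamma\ge 1$ the compact set on which both the convergence of Theorem~\ref{mainthm} and the nonvanishing of Proposition~\ref{redenergy} are needed is itself fixed --- which is exactly what legitimizes the $v_+\to 0$ limit argument $\gamma$ by $\gamma$.
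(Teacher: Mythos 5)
Your proposal is correct and follows exactly the same chain of implications the paper intends: Proposition~\ref{redenergy} gives nonvanishing of $D^0$, which via Corollary~\ref{stabcor} (itself Corollary~\ref{redcor} plus the eigenvalue confinement of Proposition~\ref{hf}) yields nonvanishing of $D$ on $\Lambda$ for $v_+$ small, i.e., spectral stability, and the $M\leftrightarrow v_+$ translation is \eqref{mach}. The paper leaves the proof implicit for exactly the reason you identify — all the real content is upstream in Theorem~\ref{mainthm} and Proposition~\ref{redenergy}.
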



\subsection{Numerical computations}\label{numerical}
Unfortunately, the energy estimate used to establish
\ref{redenergy}, though mathematically elegant, 
yields only the stated, abstract result
and not quantitative estimates.
A simpler and more general approach, that does 
yield quantitative information, is to compute the reduced Evans 
function numerically.
We carry out this by-now routine numerical computation
using the methods of \cite{BHRZ}.  Specifically, we map a
semicircle $\partial(\{\R \lambda \ge 0\}\cap \{|\lambda|\le 10\})$
enclosing $\Lambda$ for $\gamma\in [1,3]$ by $D^0$ and
compute the winding number of its image about the origin
to determine the number of zeroes of $D^0$ within the semicircle,
and thus within $\Lambda$.
%
For details of the numerical algorithm, 
see \cite{BHRZ,BrZ,HuZ.1}.


The result is displayed in Figure \ref{zero}, clearly indicating
stability.
%
%
%
%
More precisely, the minimum of $|D|$ on the semicircle is 
found to be $\approx 0.2433$.  Together with Rouch\'es Theorem, 
this gives explicit bounds on the size of the Mach number for
which shocks are stable, as displayed in Table \ref{comparison3},
Section \ref{quant}:
specifically, $M\ge 50$ for $\gamma \in [1,2]$,
$M\ge 2,400$ for $\gamma \in [2,2.5]$,
and $M\ge 13,000$ for $\gamma \in [2.5,3]$,
all corresponding approximately to $v_+=10^{-3}$.
In Figure \ref{zero2}, we superimpose on the image of the semicircle
by $D^0$ its (again numerically computed) image by 
the full Evans function $D$, for a monatomic gas $\gamma=1.66...$ 
at successively higher Mach numbers 
$v_+= ${\tt 1e-1,1e-2,1e-3,1e-4,1e-5,1e-6},
graphically demonstrating the convergence of $D$ to $D^0$
as $v_+$ approaches zero.

Indeed, we can see a great deal more from Figure \ref{zero2}.
For, note that the displayed contours are, to the scale visible
by eye, ``monotone'' in $v_+$, or nested, one within the other
(they do not appear so at smaller scales).
Thus, lower-Mach number contours are essentially ``trapped''
within higher-Mach number contours, with the worst-case, outmost
contour corresponding to the limiting Evans function $D^0$.
From this observation, we may conclude with confidence stability
down to the smallest value $M\approx 5.5$ displayed in the figure, 
corresponding (see Table \ref{comparison2}) to $v_+=10^{-1}$.
That is, a great deal of topological information is encoded
in the analytic family of Evans functions indexed by $v_+$,
from which stability may be deduced almost by inspection.
Behavior for other $\gamma\in [0,3]$ is similar. 
See, for example, the case $\gamma=3$
displayed in Figure \ref{zero3}, which is virtually identical
to Figure \ref{zero2}.\footnote{In particular, 
Figure \ref{zero3} indicates stability down to $v_+=10^{-1}$,
or Mach number $\sim 20$, from which we may conclude
unconditional stability on the whole range $\gamma\in [1,3]$ of \cite{BHRZ}.}

%
%

Such topological information 
does not seem to be available from other methods of investigating
stability such as 
direct discretation of the linearized operator about the wave \cite{KL} 
or studies based on linearized time-evolution or power methods \cite{BB,BMFC}.
{This represents in our view
a significant advantage of the Evans function formulation}.

\begin{figure}[t]
\begin{center}
\includegraphics[width=12cm]{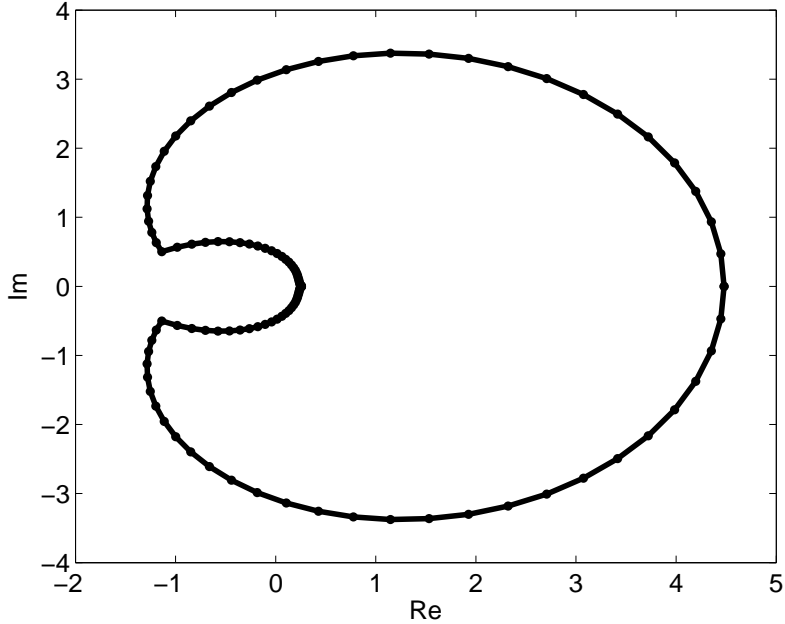} 
\end{center}
\caption{The image of the semi-circular contour via the Evans function for the reduced system.  
Note that the winding number of this graph is zero.  Hence, there are no unstable eigenvalues in the semi-circle.} 
\label{zero}
\end{figure}

\begin{figure}[t]
\begin{center}
\includegraphics[width=12cm]{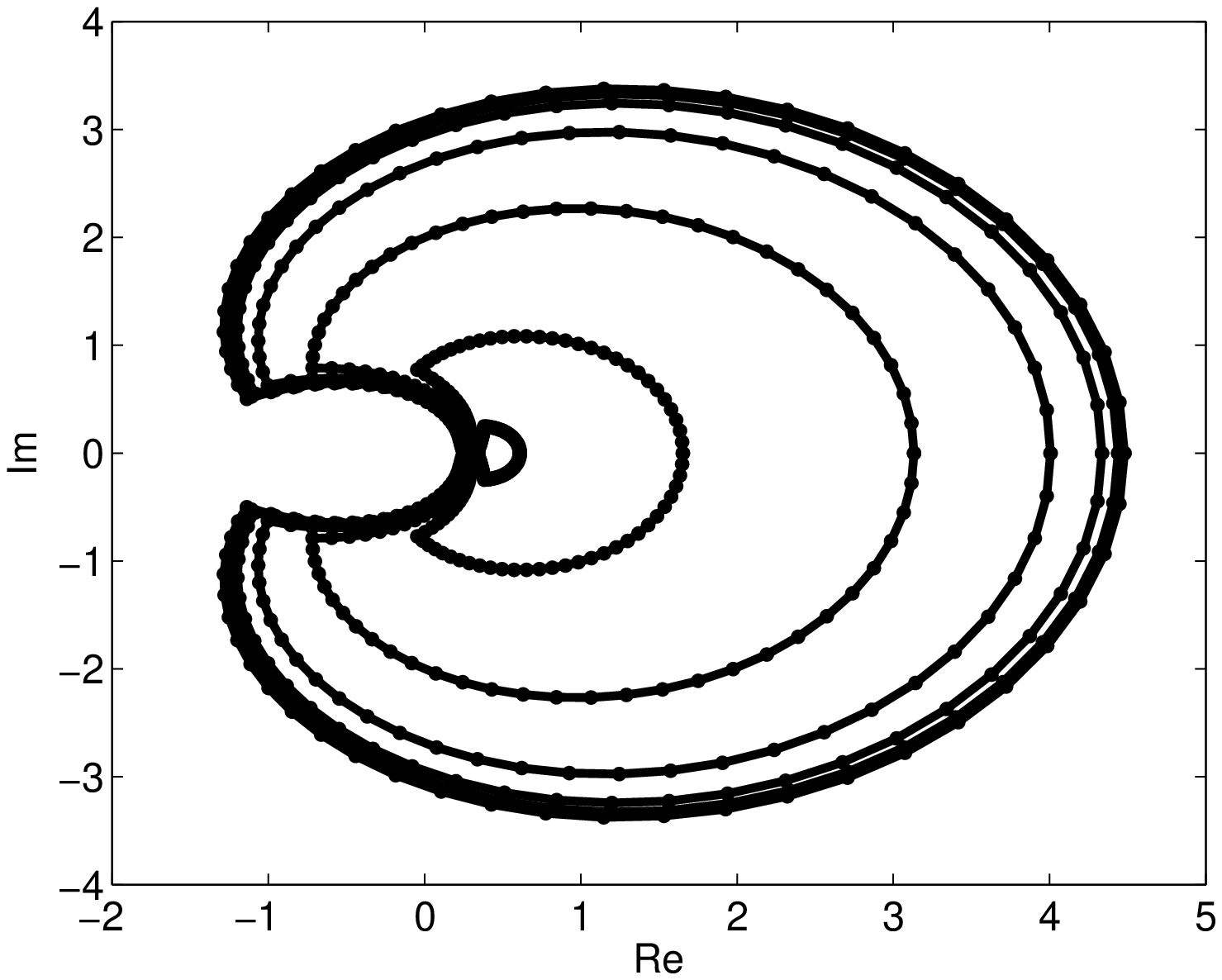} 
\end{center}
\caption{Convergence to the limiting Evans function 
as $v_+\to 0$ for a monatomic gas, $\gamma=1.66...$.
The contours depicted, going from inner to outer, 
are images of the semicircle under $D$
for $v_+=${\tt 1e-1,1e-2,1e-3,1e-4,1e-5,1e-6}.
The outermost contour is the image under $D^0$,
which is nearly indistinguishable from the image for $v_+=${\tt 1e-6}.}
\label{zero2}
\end{figure}

\begin{figure}[t]
\begin{center}
\includegraphics[width=12cm]{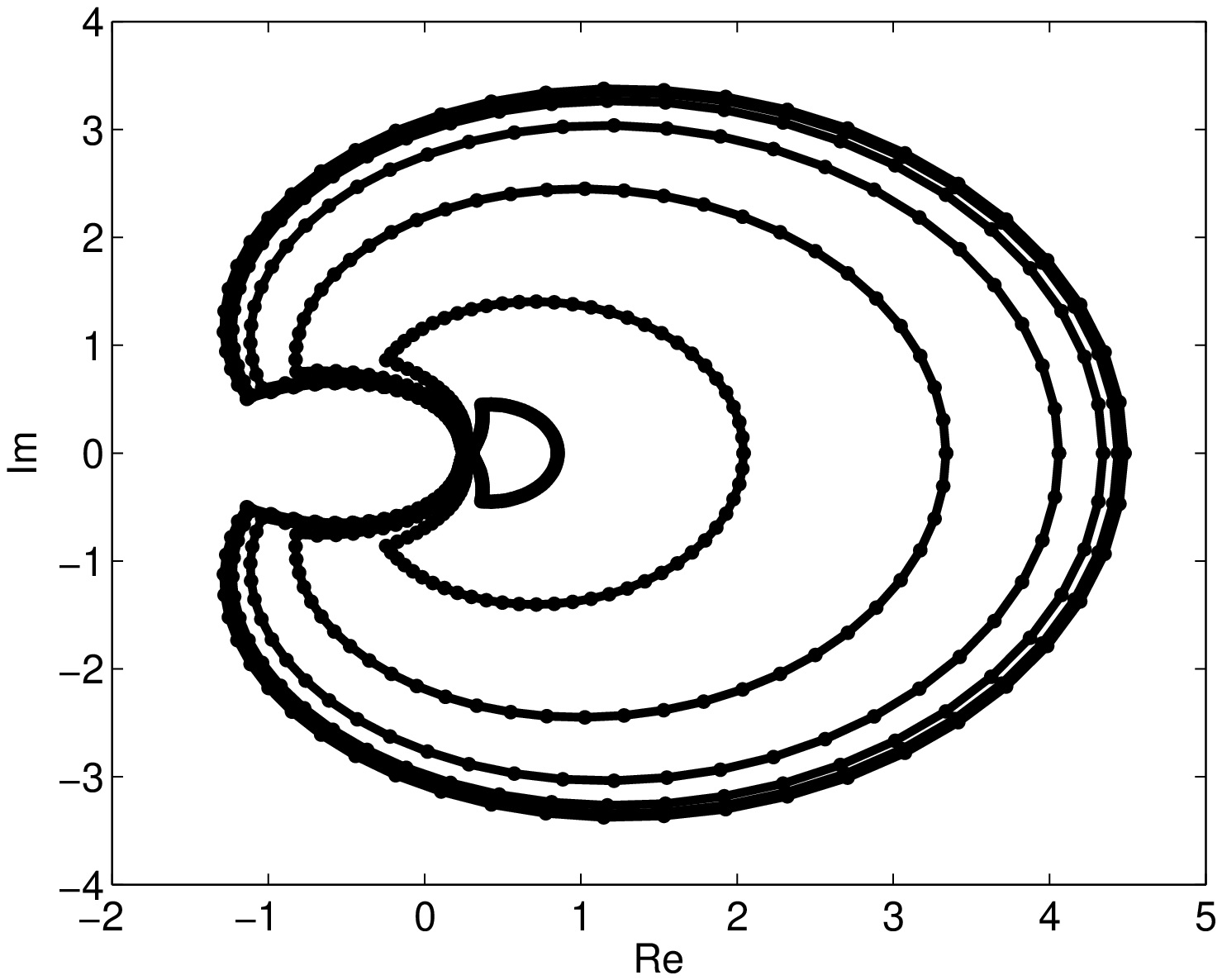} 
\end{center}
\caption{Convergence to the limiting Evans function 
as $v_+\to 0$ for $\gamma=1$.
The contours depicted, going from inner to outer, 
are images of the semicircle under $D$
for $v_+=${\tt 1e-1,1e-2,1e-3,1e-4,1e-5,1e-6}.
The outermost contour is the image under $D^0$,
which is nearly indistinguishable from the image for $v_+=${\tt 1e-6}.}
\label{zero1a}
\end{figure}

\begin{figure}[t]
\begin{center}
\includegraphics[width=12cm]{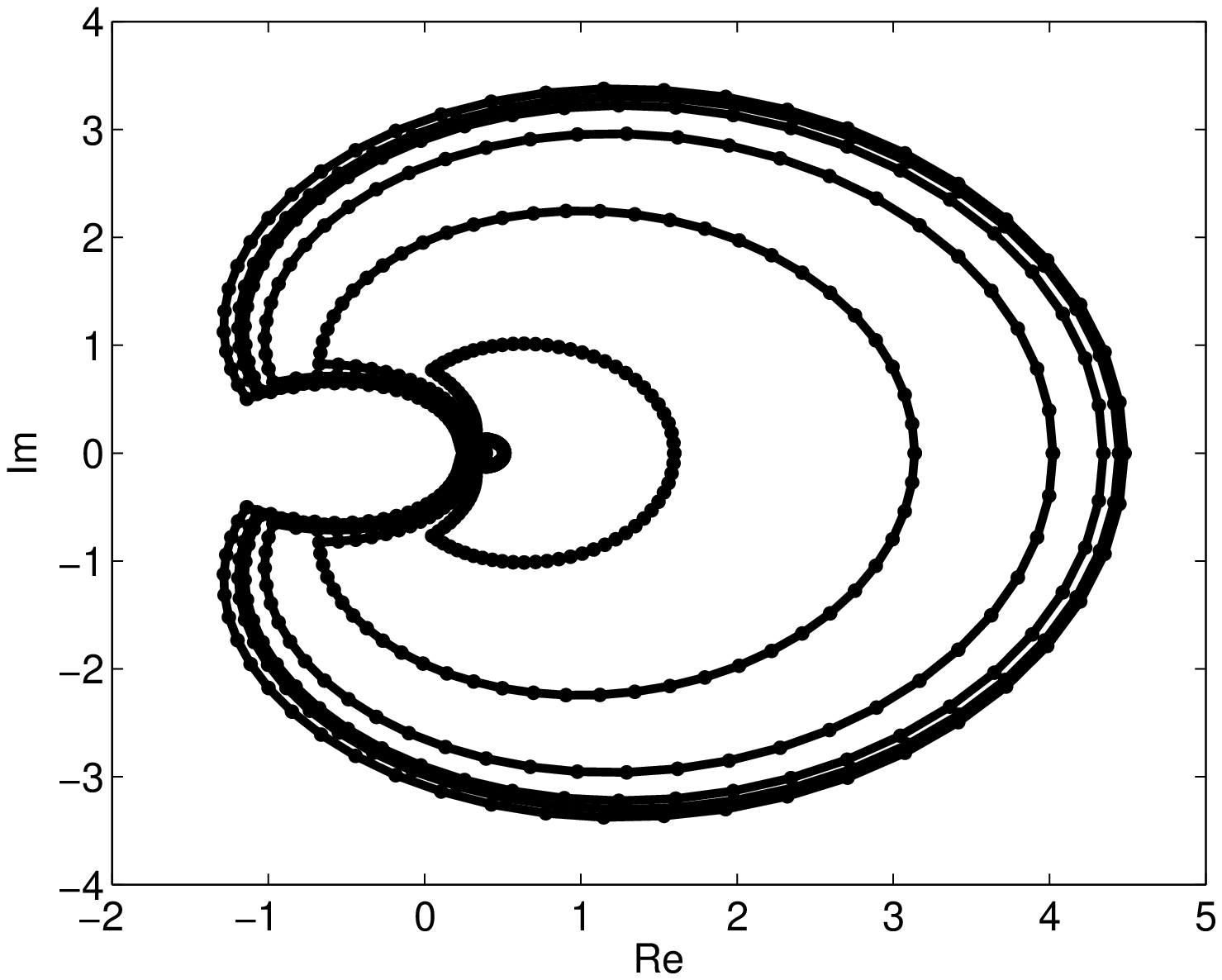} 
\end{center}
\caption{Convergence to the limiting Evans function 
as $v_+\to 0$ for $\gamma=3$.
The contours depicted, going from inner to outer, 
are images of the semicircle under $D$
for $v_+=${\tt 1e-1,1e-2,1e-3,1e-4,1e-5,1e-6}.
The outermost contour is the image under $D^0$,
which is nearly indistinguishable from the image for $v_+=${\tt 1e-6}.}
\label{zero3}
\end{figure}

\begin{remark}\label{goodchoice}
Recall that the Evans function is not determined
uniquely, but only up to a nonvanishing analytic factor \cite{AGJ,GZ}.
The simple contour structure in Figure \ref{zero2} is thus partly
due to a favorable choice of $D$ induced by the initialization at $\pm 
\infty$
by Kato's ODE \cite{Kato}, as described in \cite{BrZ,HuZ.2,BHRZ}.
A canonical algorithm for tracking bases of evolving subspaces, this in some
sense minimizes ``action''; see \cite{HSZ} for further discussion.
\end{remark}

\begin{remark}\label{ind}
Note that the limiting equations, and the limiting Evans function
$D^0$ are both independent of $\gamma$.
To study high-Mach number stability for $\gamma >3$, 
therefore, requires only to examine
$D^0$ on successively larger semicircles.
Thus, our methods in combination with the those of \cite{BHRZ}, 
allow us to determine stability in principle over any
bounded interval in $\gamma$, for $\gamma > 1$ and for all Mach numbers $M\ge 1$.
\end{remark}

\begin{remark}\label{altcalc}
As Figure \ref{zero2} suggests, an alternative method
for determining stability, without reference to $D^0$,
is to compute the full Evans function for sufficiently
high Mach number.  That is, nonvanishing of $D^0$, and thus
stability of sufficiently high Mach number shocks for $\gamma\in [1,3]$,
can already be concluded by large-but-finite Mach number
study of \cite{BHRZ} together with the fact that a limit
$D\to D^0$ exists (see also Remark \ref{uni}).
\end{remark}

\subsection{Conclusions}\label{conclusions}
The analytical result of Corollary \ref{maincor} guarantees stability 
for $\gamma\ge 1$, $M$ sufficiently large.
For $\gamma\in [1,2.5]$, our numerical results
indicate stability for $M\ge 2,500$ by a crude Rouch\/e bound,
and indeed much lower if further structure is taken into account.
Together with the small and intermediate Mach number studies of \cite{MN,BHRZ}
for $M\le 3,000$,
this yields unconditional stability of isentropic Navier--Stokes
shocks for $\gamma\in [1,2.5]$ and $M\ge 1$.
There is no inherent restriction to $\gamma\in [1,2.5]$;
as discussed in Remark \ref{ind}, numerical computations can be carried
out for any value of $\gamma$ to determine stability (or instability)
for all $M\ge 1$.
Indeed, our method of analysis indicates that the large-$\gamma$ limit
is quite analogous to the high-Mach number limit $v_+\to 0$, suggesting
the possibility to establish still more general results encompassing
all $\gamma\ge 1$, $M\ge 1$.

Our numerical results reveal also an unexpected ``universality''
of behavior in the high-Mach number regime, beyond just convergence
to the limiting system.  Namely, we see (cf. Figures \ref{zero2}
and \ref{zero3}) that behavior for a given $v_+$ is virtually
independent of the value of $\gamma$.  This also indicates 
that $v_+$ and not $M$ is the more useful measure of shock strength
in this regime.


\section{Boundary-layer analysis}\label{singular}

We now carry out the main work of the paper, analyzing
the flow of \eqref{evans_ode} in the singular region.
Our starting point is the observation that
\begin{equation}
\label{a+}
A(x,\lambda) = \begin{pmatrix}0 & \lambda & 1\\0 & 0 & 1\\ 
\lambda \hat v& \lambda \hat v &f(\hat v)-\lambda \end{pmatrix}
\end{equation}
is approximately block upper-triangular for $\hat v$ sufficiently small,
with diagonal blocks
$\begin{pmatrix}
0 & \lambda \\
0 &  0\\
\end{pmatrix}$
and
$\begin{pmatrix}
f(\hat v)-\lambda
\end{pmatrix}$
that are uniformly spectrally separated on $\R \lambda \ge 0$,
as follows by 
\begin{equation}\label{fneg}
f(\hat v)\le 2\hat v-1 \le -1/2.
\end{equation}
We exploit this structure by a judicious coordinate change
converting \eqref{evans_ode}
to a system in exact upper triangular form, for which the
decoupled ``slow'' upper lefthand $2\times 2$ block undergoes
a {\it regular perturbation} that can be analyzed by standard
tools introduced in \cite{PZ}.
Meanwhile, the fast, lower righthand $1\times 1$ block, since
scalar, may be solved exactly.

The global structure of this argument loosely follows the
general strategy of \cite{PZ} of first decoupling fast and slow
modes, then treating slow modes by regular perturbation methods.
However, there are interesting departures that may be of
use in other degenerate situations.  First, we only partially
decouple the system, to block-triangular rather than block-diagonal
form as in more standard cases, and second, we introduce a more
stable method of block-reduction taking account of usually
negligible derivative terms in the definition of block-triangularizing 
transformations, which, if ignored, would in this case lead to unacceptably 
large errors.

\subsection{Preliminary transformation}\label{pretrans}
We first block upper-triangularize by a static (constant) coordinate
transformation the limiting matrix
\begin{equation}
\label{lima+}
A_+=A(+\infty,\lambda) = \begin{pmatrix}0 & \lambda & 1\\0 & 0 & 1\\ 
\lambda v_+& \lambda v_+ &f(v_+)-\lambda \end{pmatrix}
\end{equation}
at $x=+\infty$ using special block lower-triangular transformations
\begin{equation}\label{statictrans}
R_+:=\begin{pmatrix}
I& 0\\
\lambda v_+ \theta_+ & 1\\
\end{pmatrix},
\qquad
L_+:=R_+^{-1}=\begin{pmatrix}
I& 0\\
-\lambda  v_+\theta_+ & 1\\
\end{pmatrix},
\end{equation}
where $I$ denotes the $2\times 2$ identity matrix and
$\theta_+\in \C^{1\times 2}$ is a $1\times 2$ row vector.

\begin{lemma}\label{pretranslem}
On any compact subset of $\R \lambda \ge 0$, for each $v_+>0$ 
sufficiently small, 
there exists a unique $\theta_+=\theta_+(v_+,\lambda)$ such that
$\hat A_+:=L_+A_+R_+$ is upper block-triangular, 
\begin{equation}\label{hata+}
\begin{aligned}
\hat A_+&= 
\begin{pmatrix}
\lambda (J+ v_+\bb1 \theta_+) &  \bb1 \\
0 &  f(\hat v)-\lambda -\lambda v_+ \theta_+\bb1 \\
\end{pmatrix},
\end{aligned}
\end{equation}
where $J=\begin{pmatrix} 0 & 1 \\ 0 & 0 \end{pmatrix}$
and 
$\bb1=\begin{pmatrix} 1 \\ 1 \\ \end{pmatrix} $,
satisfying a uniform bound
\begin{equation}\label{theta+bd}
|\theta_+|\le C.
\end{equation}
\end{lemma}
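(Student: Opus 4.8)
The plan is to recast the block-triangularization requirement as a single (algebraic, Riccati-type) fixed-point equation for the row vector $\theta_+$ and to solve it by the contraction mapping principle, using the uniform spectral gap \eqref{fneg} to invert the linear part and the smallness of $v_+$ to absorb the quadratic part.

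First I would write $A_+$ in $2+1$ block form, with diagonal blocks $\lambda J$ and $f(v_+)-\lambda$, upper-right block $\bb1$, and lower-left block $\lambda v_+\bb1^T$, and carry out the product $L_+A_+R_+$ directly with $L_+,R_+$ as in \eqref{statictrans}. Setting $\phi:=\lambda v_+\theta_+$, one checks that for \emph{every} row vector $\theta_+$ the upper-left, upper-right and lower-right blocks of $L_+A_+R_+$ are exactly $\lambda(J+v_+\bb1\theta_+)$, $\bb1$, and $f(v_+)-\lambda-\lambda v_+\theta_+\bb1$, in agreement with \eqref{hata+} (recalling $\hat v(+\infty)=v_+$); the only remaining constraint is vanishing of the lower-left block, which, after cancelling the common factor $\lambda v_+$, reads
\[
\theta_+\,\big((\lambda - f(v_+))I+\lambda J\big)\;+\;\lambda v_+\,(\theta_+\bb1)\,\theta_+ \;=\; \bb1^T .
\]
At $\lambda=0$ the transformation reduces to the identity and $A_+$ is already in the stated form; we simply \emph{define} $\theta_+$ there by the displayed equation, which then serves as the defining relation for all $\lambda$ in the compact set and renders the cancellation of $\lambda v_+$ harmless.

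Next, put $B=B(v_+,\lambda):=(\lambda - f(v_+))I+\lambda J$, an upper-triangular matrix with both diagonal entries equal to $\lambda-f(v_+)$ and superdiagonal entry $\lambda$. Since $f(v_+)$ is real and, by \eqref{fneg}, $f(v_+)\le -1/2<0$, we have $|\lambda-f(v_+)|\ge\R(\lambda - f(v_+))=\R\lambda+|f(v_+)|\ge 1/2$ on $\R\lambda\ge 0$, while $\lambda$ is bounded on the given compact set; hence $B$ is invertible with $|B^{-1}|\le C$ \emph{uniformly} in $v_+>0$ small and in $\lambda$ in that set. The triangularization condition then becomes the fixed-point equation $\theta_+=\mathcal{T}(\theta_+)$ with $\mathcal{T}(\theta):=\big(\bb1^T-\lambda v_+(\theta\bb1)\theta\big)B^{-1}$. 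On the ball $\{|\theta|\le\rho\}$, $\rho:=2|\bb1^T B^{-1}|$, the estimate $|\mathcal{T}(\theta)|\le|\bb1^T B^{-1}|+|\lambda|\,v_+\,\rho^2|B^{-1}|$ together with the analogous Lipschitz bound shows that, for $v_+$ small enough depending only on the compact $\lambda$-set, $\mathcal{T}$ maps this ball into itself and contracts it. This yields a unique fixed point $\theta_+$ in the ball; it produces \eqref{hata+} by construction, satisfies \eqref{theta+bd} with $C=\rho$, and — being the fixed point of a family of contractions depending analytically on $\lambda$ — depends analytically on $\lambda$, a fact we shall use later. (The scalar quadratic governing $\theta_+\bb1$ has a second root of size $O((|\lambda|v_+)^{-1})$, which we discard; every solution that remains bounded as $v_+\to0$ lies in the ball for $v_+$ small, which is the sense in which $\theta_+$ is unique.)

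This is essentially a warm-up lemma, so I do not anticipate a serious obstacle; the one point that genuinely requires care is that the bound on $B^{-1}$ — and hence the constant $C$ in \eqref{theta+bd} and the smallness of the quadratic correction $\lambda v_+(\theta_+\bb1)\theta_+$ — be independent of $v_+$ as $v_+\to0$. That uniformity is precisely what \eqref{fneg}, equivalently the uniform spectral separation of the two diagonal blocks of $A(x,\lambda)$ for $\hat v$ small, supplies, and is one of the reasons for the rescaling of Section \ref{profsec}; the remaining manipulations are routine estimates.
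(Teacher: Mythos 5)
Your proposal is correct and follows essentially the same route as the paper: both set the $2{-}1$ block of $L_+A_+R_+$ to zero to obtain the same quadratic (Riccati-type) fixed-point equation for $\theta_+$, use $f(v_+)\le -1/2$ (the uniform spectral gap) to bound $\big((f(v_+)-\lambda)I-\lambda J\big)^{-1}$ uniformly on compact subsets of $\R\lambda\ge 0$, and close by the contraction mapping principle for $v_+$ small. The extra remarks you supply --- the $\lambda=0$ caveat when cancelling the factor $\lambda v_+$, the discarded large root of the scalar quadratic (which is the correct reading of ``unique'' in the lemma), and analyticity of the fixed point in $\lambda$ --- are accurate and in fact slightly more careful than the paper's telegraphic version.
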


\begin{proof}
Setting the $2-1$ block of $\hat A_+$ to zero, we obtain the
matrix equation
$$
\theta_+ (aI-\lambda J)
=  -\bb1^T + \lambda v_+ \theta_+ \bb1 \theta_+,
$$
where $a=f(v_+)-\lambda$, or, equivalently, the fixed-point equation
\begin{equation}\label{fix1}
\theta_+ =
(aI-\lambda J)^{-1}
\Big( -\bb1^T + \lambda v_+\theta_+ \bb1 \theta_+\Big).
\end{equation}
By $\det (aI-\lambda J)= a^2\ne 0$,
$(aI-\lambda J)^{-1}$ is uniformly bounded
on compact subsets of $\R \lambda \ge 0$
(indeed, it is uniformly bounded on all of $\R \lambda \ge 0$),
whence, for $|\lambda|$ bounded and $v_+$ sufficiently small, 
there exists a unique
solution by the Contraction Mapping Theorem,
which, moreover, satisfies \eqref{theta+bd}.
\end{proof}

\subsection{Dynamic triangularization}\label{dynamic}
Defining now $Y:=L_+W$ and
%
%
\begin{equation}\label{hata}
\begin{aligned}
\hat A(x,\lambda)&= L_+A(x, \lambda) R_+(x,\lambda)\\
&=
\begin{pmatrix}\\
\lambda (J+v_+\bb1 \theta_+) &  \bb1 \\
\lambda (\hat v-v_+) \bb1^T 
  -\lambda v_+(f(\hat v)-f(v_+))\theta_+
&  f(\hat v)-\lambda -\lambda v_+ \theta_+\bb1 \\
\end{pmatrix},
\end{aligned}
\end{equation}
we have converted \eqref{evans_ode} to an asymptotically block
upper-triangular system
\begin{equation}\label{tri1}
Y'=\hat A(x,\lambda) Y, 
\end{equation}
with $\hat A_+=\hat A(+\infty, \lambda)$ as in \eqref{hata+}.
Our next step is to choose a {\it dynamic} transformation of
the same form
\begin{equation}\label{dyntrans}
\tilde R:=\begin{pmatrix}
I& 0\\
\tilde \Theta & 1\\
\end{pmatrix},
\qquad
\tilde L:=\tilde R^{-1}=\begin{pmatrix}
I& 0\\
-\tilde \Theta & 1\\
\end{pmatrix},
\end{equation}
converting \eqref{tri1} to an exactly block upper-triangular
system, with $\tilde \Theta$ uniformly exponentially decaying
at $x=+\infty$: that is, a {\it regular perturbation} of the identity.

\begin{lemma}\label{dyntranslem}
On any compact subset of $\R \lambda \ge 0$, 
for $L$ sufficiently large and each $v_+>0$ sufficiently small, 
there exists a unique $\Theta=\Theta_+(x,\lambda, v_+)$ such that
$\tilde A:=\tilde L \hat A(x,\lambda)\tilde R
+ \tilde L'\tilde R$ is upper block-triangular, 
\begin{equation}\label{ahat}
\begin{aligned}
\tilde A&= 
\begin{pmatrix}
\lambda (J+v_+\bb1 \theta_+) + \bb1 \tilde \Theta &  \bb1 \\
0 &  f(\hat v)-\lambda -\lambda \theta_+\bb1 -\tilde \Theta \bb1 \\
\end{pmatrix},
\end{aligned}
\end{equation}
and 
$\tilde \Theta(L)=0$, satisfying a uniform bound
\begin{equation}\label{Thetabd}
|\tilde \Theta(x,\lambda, v_+)|\le Ce^{-\eta x},
\qquad \eta>0, \, x\ge L,
\end{equation}
independent of the choice of $L$, $v_+$.
\end{lemma}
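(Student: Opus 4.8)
The plan is to recast the block-triangularization requirement as a Riccati ODE for $\tilde\Theta$ in the spirit of \cite{PZ}, and then solve that ODE by a contraction mapping in a weighted sup-norm space, using the spectral separation \eqref{fneg} together with the uniform exponential convergence of the profile supplied by Proposition \ref{profdecay}. First I would carry out the change of variables explicitly: writing the blocks of $\hat A$ from \eqref{hata} as
\[
\hat A=\begin{pmatrix}\hat A_{11}&\bb1\\ \hat A_{21}&\hat A_{22}\end{pmatrix},\qquad \hat A_{11}=\lambda(J+v_+\bb1\theta_+),\qquad \hat A_{22}=f(\hat v)-\lambda-\lambda v_+\theta_+\bb1,
\]
with $\hat A_{11}$ \emph{constant} in $x$ and $\hat A_{22}$ scalar, and noting that under $Z:=\tilde L Y$ one has $Z'=(\tilde L\hat A\tilde R+\tilde L'\tilde R)Z=\tilde A Z$, a block computation shows $\tilde A$ takes the form \eqref{ahat} precisely when its lower-left block vanishes, i.e.\ when
\begin{equation}\label{riccdyn}
\tilde\Theta'=\hat A_{21}+\hat A_{22}\tilde\Theta-\tilde\Theta\hat A_{11}-\tilde\Theta\bb1\tilde\Theta,\qquad\tilde\Theta(L)=0 .
\end{equation}
(It is exactly the retention of the derivative term $\tilde L'\tilde R$ that replaces the static --- and, here, badly conditioned --- Sylvester equation by the ODE \eqref{riccdyn}; this is the ``more stable'' reduction alluded to above.) So the lemma reduces to showing that \eqref{riccdyn} has a unique solution on $[L,\infty)$ satisfying \eqref{Thetabd}.

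Next I would record three bounds, all uniform for $\lambda$ in a fixed compact subset of $\R\lambda\ge0$ and $v_+$ small. \emph{(i)} Since $\hat A_{21}=\lambda(\hat v-v_+)\bb1^T-\lambda v_+(f(\hat v)-f(v_+))\theta_+$, the profile estimate $|\hat v-v_+|\le\tfrac1{12}e^{-3x/4}$ ($x\ge0$) of Proposition \ref{profdecay}, the formula \eqref{feq2} together with $a\sim v_+^\gamma$ --- which gives $a(\gamma-1)|\hat v^{-\gamma}-v_+^{-\gamma}|\lesssim v_+^{-1}|\hat v-v_+|$ on $v_+\le\hat v\le v_++\tfrac1{12}$, so that the $v_+$ prefactor of the second term cancels the apparent $v_+^{-1}$ --- and the bound $|\theta_+|\le C$ from \eqref{theta+bd} yield $|\hat A_{21}(x)|\le Ce^{-3x/4}$ for $x\ge0$. \emph{(ii)} From \eqref{fneg} and $\R\lambda\ge0$, $\R\hat A_{22}(x)\le-\tfrac14$ on the singular region $x\ge L$ (for $L$ large, $v_+$ small). \emph{(iii)} $\hat A_{11}$ is constant with $\det\hat A_{11}=-\lambda^2v_+\theta_{+,1}$ and $\tr\hat A_{11}=\lambda v_+(\theta_{+,1}+\theta_{+,2})$, hence has spectrum of size $O(\sqrt{v_+})$; even though this spectrum collapses onto the nontrivial Jordan block of $\lambda J$ as $v_+\to0$, one still has the $v_+$-uniform flow bound
\begin{equation}\label{ehata}
\|e^{-t\hat A_{11}}\|\le C(1+|\lambda|t)\,e^{c_1\sqrt{v_+}\,t},\qquad t\ge0,
\end{equation}
$C$ absolute and $c_1$ bounded on the compact $\lambda$-set, obtained by combining the nilpotency identity $e^{-t\lambda J}=I-t\lambda J$ with a Duhamel expansion in the $O(v_+)$ perturbation for $t\lesssim v_+^{-1/2}$ and the diagonalization bound for $t\gtrsim v_+^{-1/2}$.

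Then I would run the fixed-point argument. Because $\hat A_{22}$ is scalar and $\hat A_{11}$ constant, the solution operator of the homogeneous equation $Z'=\hat A_{22}Z-Z\hat A_{11}$ acts on row vectors by $S(x,y):Z\mapsto\big(e^{\int_y^x\hat A_{22}}\big)\,Z\,e^{-(x-y)\hat A_{11}}$, so \emph{(ii)} and \eqref{ehata} give $\|S(x,y)\|\le Ce^{-(x-y)/4}(1+|\lambda|(x-y))e^{c_1\sqrt{v_+}(x-y)}\le C'(1+(x-y))e^{-(x-y)/8}$ for $v_+$ small. Duhamel turns \eqref{riccdyn} into the fixed-point equation
\begin{equation}\label{riccfp}
\tilde\Theta(x)=\int_L^xS(x,y)\big(\hat A_{21}(y)-\tilde\Theta(y)\bb1\tilde\Theta(y)\big)\,dy=:\mathcal F[\tilde\Theta](x),
\end{equation}
and on the Banach space $X_\eta:=\{w\in C([L,\infty);\C^{1\times2}):\|w\|_\eta:=\sup_{x\ge L}e^{\eta x}|w(x)|<\infty\}$ with $\eta>0$ fixed sufficiently small (e.g.\ $\eta<\tfrac1{16}$), \emph{(i)} and the bound on $S$ give $\|\mathcal F[0]\|_\eta\le C''$ with $C''$ independent of $L$ (compute it at $L=0$), while on the ball of radius $2C''$ the quadratic term contributes a Lipschitz constant $\le C'''e^{-\eta L}$, so $\mathcal F$ is a genuine contraction once $L$ is large. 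Its unique fixed point is the desired $\tilde\Theta$; it satisfies $\tilde\Theta(L)=0$ by construction and $|\tilde\Theta(x)|\le2C''e^{-\eta x}$ for $x\ge L$, which is \eqref{Thetabd} with $\eta$ and $C:=2C''$ independent of $L$ and $v_+$. Feeding this back into the block multiplication produces \eqref{ahat}.

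The hard part is \eqref{ehata}: controlling the flow of $\hat A_{11}$ with constants that do not degenerate as $v_+\to0$. A crude Gronwall estimate (treating $\hat A_{11}$ as $\lambda J$ plus an $O(v_+)$ term) yields an exponent quadratic in $t$, of order $e^{Cv_+t^2}$, which is \emph{not} absorbed by the $e^{-t/4}$ decay coming from $\hat A_{22}$ and would make $|\tilde\Theta|$ grow in $x$; while the bare diagonalization bound carries the condition number $\kappa\sim v_+^{-1/2}$ of the nearly parallel eigenvectors of $\hat A_{11}$, which would leak into $C$ in \eqref{Thetabd}. One must instead interpolate the two regimes to obtain a bound that is only linear in $t$ in the exponent and is $O(1)$ at $t=0$, uniformly in $v_+$ --- and it is precisely to keep this quantity, rather than an ill-conditioned Sylvester inverse, under control that the derivative term must be retained in the definition of $\tilde A$. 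Everything else is the routine regular-perturbation machinery of \cite{PZ}.
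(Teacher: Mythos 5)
Your proposal matches the paper's proof of Lemma \ref{dyntranslem} essentially line by line: the same Riccati ODE for $\tilde\Theta$ with the derivative term retained (rather than dropped, as in a quasi-static reduction), the same exponential bound on the forcing term via $df/d\hat v\le C\hat v^{-1}$ and the Mean Value Theorem, and the same contraction-mapping argument against the solution operator $S^{y\to x}=e^{\int_y^x a}\,e^{-\lambda(J+v_+\bb1\theta_+)(x-y)}$ in a weighted sup norm. Your explicit $v_+$-uniform bound $|e^{-\lambda(J+v_+\bb1\theta_+)t}|\le C(1+|\lambda|t)e^{c\sqrt{v_+}t}$ usefully sharpens the paper's one-line appeal to ``matrix perturbation theory,'' which by itself does not rule out a $v_+^{-1/2}$ conditioning loss from the nearly collapsing eigenvectors of $J+v_+\bb1\theta_+$.
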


\begin{proof}
Setting the $2-1$ block of $\tilde A$ to zero and computing
$$
\tilde L'\tilde R=
\begin{pmatrix}
0 & 0\\
-\tilde \Theta' & 0
\end{pmatrix}
\begin{pmatrix}
I & 0\\
\tilde \Theta & I
\end{pmatrix}
=
\begin{pmatrix}
0 & 0\\
-\tilde \Theta' & 0,
\end{pmatrix}
$$
we obtain the matrix equation
\begin{equation}\label{mat}
\tilde \Theta' - \tilde \Theta \big(aI-\lambda (J +v_+\bb1\theta_+)\big)
=  \zeta+ \tilde \Theta \bb1 \tilde \Theta,
\end{equation}
where the forcing term
$$
\zeta:=
-\lambda (\hat v-v_+) \bb1^T 
  +\lambda v_+(f(\hat v)-f(v_+))\theta_+
$$
by derivative estimate $df/d\hat v\le C\hat v^{-1}$
together with the Mean Value Theorem 
is uniformly exponentially decaying:
\begin{equation}\label{zetabd}
\begin{aligned}
|\zeta|\le C |\hat v-v_+|\le
C_2 e^{-\eta x},
\qquad
\eta>0.
\end{aligned}
\end{equation}

Initializing $\tilde \Theta(L)=0$, we obtain by Duhamel's Principle/Variation
of Constants the representation (supressing the argument $\lambda$)
\begin{equation}\label{duhamel}
\tilde \Theta(x)=
\int_L^{x}
S^{y\to x}
(\zeta+ \tilde \Theta \bb1 \tilde \Theta)(y)
\, dy,
\end{equation}
where $S^{y\to x}$ is the solution operator for the homogeneous
equation
$$
\tilde \Theta' - \tilde \Theta \big(aI-\lambda (J +v_+\bb1\theta_+)\big)=0,
$$
or, explicitly,
$$
S^{y\to x}=
e^{\int_y^x a(y)dy}
e^{ -\lambda (J +v_+\bb1\theta_+)(x-y)}.
$$

For $|\lambda|$ bounded and $v_+$ sufficiently small, 
we have by matrix perturbation theory
that the eigenvalues of $ -\lambda (J +v_+\bb1\theta_+)$
are small and the entries are bounded, hence
$$ 
|e^{ -\lambda (J +v_+\bb1\theta_+)z}|\le Ce^{\epsilon z}
$$
for $z\ge 0$.  Recalling the uniform spectral gap
$\R a =f(\hat v)-\R \lambda \le -1/2$ for $\R \lambda \ge 0$,
we thus have
\begin{equation}\label{Sbd}
|S^{y\to x}|\le Ce^{\eta (x-y)}
\end{equation}
for some $C$, $\eta>0$.
Combining \eqref{zetabd} and \eqref{Sbd}, we obtain
\begin{equation}
\begin{aligned}
\Big|\int_L^{x} S^{y\to x} \zeta(y)\, dy\Big|&\le
\int_L^x
C_2 e^{-\eta(x-y)}e^{-(\eta/2) y} dy \\
& =C_3 e^{-(\eta/2)x}.
\end{aligned}
\end{equation}

Defining $\tilde \Theta(x) =:\tilde \theta(x) e^{-(\eta/2)x}$
and recalling \eqref{duhamel} we thus have
\begin{equation}\label{duhamel2}
\tilde \theta(x)=
f + e^{(\eta/2)x}\int_L^{x}
S^{y\to x}
 e^{-\eta y}\tilde \theta \bb1 \tilde \theta(y)
\, dy,
\end{equation}
where $f:= e^{(\eta/2)x}\int_L^{x} S^{y\to x} \zeta(y) \, dy$
is uniformly bounded, $|f|\le C_3$, and
$
e^{(\eta/2)x}\int_L^{x} S^{y\to x}
e^{-\eta y}\tilde \theta \bb1 \tilde \theta(y)
\, dy
$
is contractive with arbitrarily small contraction constant $\epsilon>0$ 
in $L^\infty[L,+\infty)$ for $|\tilde \theta|\le 2C_3$ for $L$ sufficiently
large, by the calculation
$$
\begin{aligned}
&\Big| e^{(\eta/2)x}\int_L^{x} S^{y\to x}
e^{-\eta y}\tilde \theta_1 \bb1 \tilde \theta_1(y)
-
e^{(\eta/2)x}\int_L^{x} S^{y\to x}
e^{-\eta y}\tilde \theta_2 \bb1 \tilde \theta_2(y)
\Big| 
\\
& \qquad \qquad \le
\Big|e^{(\eta/2)x}\int_L^{x} Ce^{-\eta(x-y)} e^{-\eta y} \, dy \Big| 
\|\tilde \theta_1- \tilde \theta_2\|_\infty  
\max_j \|\tilde \theta_j\|_\infty  
\\
& \qquad \qquad \le
e^{-(\eta/2)L}\Big|\int_L^{x} Ce^{-(\eta/2)(x-y)}  \, dy \Big| 
\|\tilde \theta_1- \tilde \theta_2\|_\infty  
\max_j \|\tilde \theta_j\|_\infty \\
& \qquad \qquad =
C_3e^{-(\eta/2)L} \|\tilde \theta_1- \tilde \theta_2\|_\infty  
\max_j \|\tilde \theta_j\|_\infty. 
\end{aligned}
$$
It follows by the Contraction Mapping Principle that there exists
a unique solution $\tilde \theta$ of fixed point equation
\eqref{duhamel2} with $|\tilde \theta(x)|\le 2C_3$
for $x\ge L$,
or, equivalently (redefining the unspecified constant $\eta$), \eqref{Thetabd}.
\end{proof}

\begin{remark}\label{delicate}
The above calculation is the most delicate part of the analysis,
and the main technical point of the paper.
The interested reader may verify that a ``quasi-static'' transformation
treating term $\tilde \Theta'$ in \eqref{mat} as an error, as is typically
used in situations of slowly-varying coefficients (see for example
\cite{MZ.3,PZ}), would lead to unnaceptable errors of magnitude
$$
O(|f(\hat v))'||\hat v-v_+|)=
O(|df/d\hat v)| |\hat v-v_+|)=
O(|\hat v|^{-1} |\hat v-v_+|).
$$
One may think of the exact ODE solution \eqref{dyntrans}
as ``averaging'' the effects
of rapidly-varying coefficients by integration of \eqref{mat}.
\end{remark}

\subsection{Fast/Slow dynamics}\label{slow}
Making now the further change of coordinates 
$$
Z=\tilde LY
$$
and computing
$$
\begin{aligned}
(\tilde LY)'=\tilde L Y' + \tilde L' Y
&=(\tilde LA_++\tilde L')Y,\\
&=(\tilde LA_+\tilde R+\tilde L'\tilde R)Z,\\
\end{aligned}
$$
we find that we have converted \eqref{tri1} to a block-triangular system
\begin{equation} \label{tri2}
Z'=\tilde AZ=
\begin{pmatrix}
\lambda (J+v_+\bb1 \theta_+) + \bb1 \tilde \Theta &  \bb1 \\
0 &  f(\hat v)-\lambda -\lambda v_+ \theta_+\bb1 -\tilde \Theta \bb1 \\
\end{pmatrix}Z,
\end{equation}
related to the original eigenvalue system \eqref{evans_ode} by
\begin{equation}\label{WZ}
W=LZ,\quad
R:=R_+R=
\begin{pmatrix}
I & 0\\
\Theta & 1
\end{pmatrix},
\quad 
L:=R^{-1}=
\begin{pmatrix}
I & 0\\
-\Theta & 1
\end{pmatrix},
\end{equation}
where
\begin{equation}\label{Theta}
\Theta= \tilde \Theta + \lambda v_+ \theta_+.
\end{equation}

Since it is triangular, \eqref{tri2} may be solved completely
if we can solve the component systems associated with its diagonal
blocks.  The {\it fast system}
$$
z'=
\Big(f(\hat v)-\lambda -\lambda v_+ \theta_+\bb1 -\tilde \Theta \bb1 \Big)z
$$
associated to the lower righthand block features rapidly-varying
coefficients.  However, because it is scalar, 
it can be solved explicitly by exponentiation.

The {\it slow system }
\begin{equation} \label{slowsys}
z'= \Big(\lambda (J+v_+\bb1 \theta_+) + \bb1 \tilde \Theta \Big) z
\end{equation}
associated to the upper lefthand block, on the other hand,
by \eqref{Thetabd}, is an exponentially decaying
perturbation of a constant-coefficient system
\begin{equation}\label{cc}
z'= \lambda (J+v_+\bb1 \theta_+)z
\end{equation}
that can be explicitly solved by exponentiation, and thus
can be well-estimated by comparison with \eqref{cc}.
A rigorous version of this statement is given by the 
{\it conjugation lemma} of \cite{MeZ}:

\begin{proposition}[\cite{MeZ}] \label{conjugation}
Let $M(x,\lambda)=M_+(\lambda)+ \Theta(x,\lambda)$,
with $M_+$ continuous in $\lambda$ and $|\Theta(x,\lambda)|\le Ce^{-\eta x}$,
for $\lambda$ in some compact set $\Lambda$.
Then, there exists a globally invertible matrix
$P(x,\lambda)=I + Q(x,\lambda)$ such that the 
coordinate change $z=Pv$ converts the variable-coefficient ODE 
$
z'=M(x,\lambda)z
$
to a constant-coefficient equation
$$
v'=M_+(\lambda)v,
$$
satisfying for any $L$, $0<\hat \eta < \eta$ a uniform bound
\begin{equation}\label{qdecay}
|Q(x,\lambda)|\le 
C(L,\hat \eta, \eta, \max |(M_+)_{ij}|, \dim M_+)e^{-\hat \eta x}
\quad \hbox{for $x\ge L$}.
\end{equation}
\end{proposition}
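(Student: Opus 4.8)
The plan is the standard ``conjugation'' (Gap-Lemma--type) argument: turn the desired change of variables into a linear matrix ODE for $Q$ (a homological equation), solve it by a contraction mapping in an exponentially weighted sup-norm space on $[L,\infty)$, and read off invertibility of $P=I+Q$ from the smallness of $Q$. First I would substitute $z=Pv=(I+Q)v$ into $z'=Mz$ and impose $v'=M_+v$: this holds if and only if $P'=MP-PM_+$, i.e.
\begin{equation*}
Q'-\mathrm{ad}_{M_+}(Q)=\Theta(x,\lambda)\,(I+Q),
\qquad \mathrm{ad}_{M_+}(B):=M_+B-BM_+ ,
\end{equation*}
an affine linear ODE for $Q$ on $[L,\infty)$ with forcing $\Theta$ and homogeneous operator $\mathrm{ad}_{M_+}$ acting on $\C^{d\times d}$, $d=\dim M_+$. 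Everything thus reduces to producing a bounded, exponentially decaying solution $Q$ of this equation.

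Next I would recast this as a fixed-point equation. The spectrum of $\mathrm{ad}_{M_+}$ is $\{\mu_i-\mu_j\}$, $\mu_i$ the eigenvalues of $M_+$; it always contains $0$, so $\mathrm{ad}_{M_+}$ is never hyperbolic and its homogeneous flow $B\mapsto e^{(x-y)M_+}Be^{-(x-y)M_+}$ is neither forward- nor backward-decaying on all of $\C^{d\times d}$. I would therefore split $\C^{d\times d}$ into the $\mathrm{ad}_{M_+}$-invariant spectral subspaces and write the solution blockwise by Duhamel's formula, integrating \emph{from $+\infty$} on the block where $\R(\mu_i-\mu_j)>-\hat\eta$ and \emph{from $x=L$} on the strongly stable block where $\R(\mu_i-\mu_j)<-\hat\eta$. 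Since $\Theta$ decays at rate $\eta$ and $\hat\eta<\eta$, in each case the Duhamel integral converges, the boundary contribution at $+\infty$ vanishes, and the output is $O(e^{-\hat\eta x})$ (one picks up at worst an algebraic prefactor from Jordan blocks, absorbed because $\hat\eta<\eta$ is strict); this is exactly where the hypothesis $\hat\eta<\eta$ enters. Assembling the blocks gives an affine operator $\mathcal{T}$ on $C_{\hat\eta}:=\{Q\in C([L,\infty)):\ \|Q\|_{\hat\eta}:=\sup_{x\ge L}e^{\hat\eta x}|Q(x)|<\infty\}$ whose fixed points are precisely the decaying solutions of the homological equation.

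Then I would run the Contraction Mapping Theorem for $\mathcal{T}$ on $C_{\hat\eta}$. The inhomogeneous term $\mathcal{T}(0)$ is the integral operator applied to $\Theta$, so $\|\mathcal{T}(0)\|_{\hat\eta}\le C$ with $C$ depending only on $L,\hat\eta,\eta,\max_{ij}|(M_+)_{ij}|$ and $d$ (through resolvent/contour bounds on the $\mathrm{ad}_{M_+}$-flow and its spectral projections), while the linear part of $\mathcal{T}$ (the $\Theta Q$ contribution) has operator norm $\le C\sup_{x\ge L}|\Theta(x)|\le C'e^{-\eta L}<1/2$ for $L$ large; hence $\mathcal{T}$ is a contraction and yields a unique fixed point with $\|Q\|_{\hat\eta}\le 2C$, which is precisely the bound \eqref{qdecay}. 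For arbitrary $L$ one first does this for $L$ large and then absorbs the bounded contribution over the remaining finite interval into the constant; $|Q(x)|<1$ on $[L,\infty)$ makes $P=I+Q$ invertible there, and solving $P'=MP-PM_+$ backward from $x=L$ (a product of fundamental matrices) extends it to a globally invertible $P$. Finally, $M_+(\lambda)$ is continuous on the compact set, so I would cover it by finitely many neighborhoods on which the spectral configuration of $\mathrm{ad}_{M_+(\lambda)}$ relative to the line $\R(\cdot)=-\hat\eta$ is constant, run the construction with uniform constants on each, and use that the fixed point is independent of the (non-unique) spectral splitting to patch them into a single $Q(x,\lambda)$, continuous in $\lambda$, with the stated uniform bound.

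The step I expect to be the main obstacle is this last one combined with the blockwise integration of the previous paragraph: one must choose, uniformly in $\lambda$, the integration direction on each spectral block so that both convergence and the target decay rate hold as the eigenvalues of $\mathrm{ad}_{M_+(\lambda)}$ move with $\lambda$, and in particular replace $\hat\eta$ (if necessary by a slightly larger value, still $<\eta$) to avoid the finitely many resonant values $-\R(\mu_i-\mu_j)$ encountered, at which the boundary term at $+\infty$ would fail to decay. Once that bookkeeping is organized, the remaining estimates are routine Duhamel/Gr\"onwall arguments.
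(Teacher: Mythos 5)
Your proposal is correct and takes essentially the same route as the paper: both pass to the homological equation for the conjugator, view it as an asymptotically constant-coefficient linear ODE on the space of matrices governed by the Sylvester (adjoint) operator $B\mapsto M_+B-BM_+$, split the Duhamel representation by spectral projections relative to a line $\R(\cdot)=-\tilde\eta$ chosen inside $(-\eta,-\hat\eta)$ away from resonances, and close a contraction in an exponentially weighted sup-norm on $[L,\infty)$ for $L$ large, extending backward to arbitrary $L$ afterward. The only organizational difference is that the paper packages the Duhamel-plus-contraction step as a standalone ``Quantitative Gap Lemma'' (Proposition~\ref{easygap}) and then specializes it via Corollary~\ref{qcor1}, using the observation that the Sylvester operator on $n\times n$ matrices has $\ge n$ zero eigenvalues and equally many stable and unstable ones to bound the number of stable eigenvalues by $(n^2-n)/2$ and hence, by a pigeonhole count, to guarantee a quantitative spectral gap $\delta$ in $[\hat\eta,\eta]$; you reach the same endpoint by allowing yourself to nudge $\hat\eta$ to avoid the finitely many resonant values $-\R(\mu_i-\mu_j)$, which is the same trick stated less quantitatively. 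Your remarks on invertibility of $P=I+Q$ from $|Q|<1$, on absorbing the finite interval $[L,L_0]$, and on the cover-and-patch argument for uniformity in $\lambda$ are all consistent with (and slightly more explicit than) what the paper leaves implicit.
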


\begin{proof}
See \cite{MeZ,Z.3}, or Appendix \ref{quantlem}.
\end{proof}

By Proposition \ref{conjugation}, the solution operator for \eqref{slowsys}
is given by
\begin{equation}\label{slowsoln}
P(y,\lambda) e^{\lambda (J+v_+\bb1 \theta_+(\lambda, v_+))(x-y)}
P(x,\lambda)^{-1},
\end{equation}
where $P$ is a uniformly small perturbation of the identity
for $x\ge L$ and $L>0$ sufficiently large.

\section{Proof of the main theorem}\label{proofsec}

With these preparations, we turn now to the proof of the main theorem.

\subsection{Boundary estimate}\label{estW1}
We begin by establishing the following key estimates on
$\widetilde W_1^+(L)$, that is, the value of
the dual mode $\widetilde W_1^+$ appearing in \eqref{adjevans} 
at the boundary $x=L$ between regular and singular regions.

\begin{lemma}\label{matching}
For $\lambda$ on any compact subset of $\R \lambda \ge 0$,
and $L>0$ sufficiently large,
with $\widetilde W_1^+$ normalized as in \cite{GZ,PZ,BHRZ},
\begin{equation}\label{wcon}
|\widetilde W_1^+(L)-\widetilde V_1| \le Ce^{-\eta L}
\end{equation}
as $v_+\to 0$, uniformly in $\lambda$, where $C$, $\eta>0$ are
independent of $L$
and 
$$
\widetilde V_1:= (0, 1, (1 +\bar \lambda)^{-1})^T
$$
is the limiting direction vector \eqref{tildeV}
appearing in the definition of $D^0$.
\end{lemma}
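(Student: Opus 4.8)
The plan is to track the dual mode $\widetilde W_1^+$ from $x=+\infty$ back to $x=L$ using the triangularization machinery of Section \ref{singular}, exploiting the fact that in the triangularized coordinates the "fast" dual direction decouples cleanly. First I would pass to the adjoint of the block-triangular system \eqref{tri2}. Since $\widetilde W$ solves $\widetilde W' = -A(x,\lambda)^*\widetilde W$ and $W = LZ$ with $L = L(x,\lambda,v_+)$ the composite transformation from \eqref{WZ}, the adjoint variable transforms as $\widetilde Z = L^* \widetilde W$ (equivalently $R^*\widetilde Z$ solves the original adjoint system), so that $\widetilde Z$ satisfies $\widetilde Z' = -\tilde A(x,\lambda)^* \widetilde Z$, whose coefficient matrix is block \emph{lower}-triangular with diagonal blocks the conjugate-transposes of the slow block $\lambda(J+v_+\bb1\theta_+) + \bb1\tilde\Theta$ and the scalar fast block $f(\hat v)-\lambda-\lambda v_+\theta_+\bb1 -\tilde\Theta\bb1$. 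The one-dimensional unstable manifold of the adjoint system at $x=+\infty$ is spanned by the decaying mode of $-\tilde A^*$; because $\R(f(\hat v)-\lambda)\le -1/2$ uniformly (by \eqref{fneg}), the fast scalar block of $-\tilde A^*$ has strictly positive real part, hence is the \emph{growing} mode of the adjoint; so the unstable (decaying-at-$+\infty$) adjoint mode lies in the kernel of the fast component, i.e.\ in the slow block. Concretely, in $\widetilde Z$-coordinates the decaying-at-$+\infty$ solution has the form $\widetilde Z = (\zeta_1, \zeta_2, 0)^T$ up to an exponentially small correction from the off-diagonal coupling, where $(\zeta_1,\zeta_2)^T$ solves the adjoint of the slow system \eqref{slowsys}.

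Next I would solve the slow adjoint system using the conjugation lemma, Proposition \ref{conjugation}: its solution operator is $P(y,\lambda)e^{-\lambda(J+v_+\bb1\theta_+)^*(x-y)}P(x,\lambda)^{-1}$ up to a uniformly small perturbation $P = I + Q$ of the identity on $x\ge L$. As $v_+\to 0$, $\theta_+$ stays bounded (Lemma \ref{pretranslem}), so $\lambda(J+v_+\bb1\theta_+)^* \to \lambda J^*$, a nilpotent matrix; the decaying (as $x\to+\infty$ along the backward flow, i.e.\ the unstable direction of the adjoint) eigendirection of the $2\times 2$ slow adjoint block converges to the kernel of $\lambda J^*$, which is spanned by $(0,1)^T$ — exactly the $2\times 2$ truncation of $\widetilde V_1$. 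The remaining point is to recover the third component: reversing the composite transformation $\widetilde W = (L^*)^{-1}\widetilde Z = R^* \widetilde Z$ with $R$ as in \eqref{WZ}, and using $\Theta = \tilde\Theta + \lambda v_+\theta_+$ with $|\tilde\Theta(L)| \le Ce^{-\eta L}$ (Lemma \ref{dyntranslem}) and $v_+\to 0$, one reads off that the third component is $\bar a^{-1}$ times the second, $a = -1-\lambda$, because the fast row of $R^*$ acting on $\widetilde Z=(\zeta_1,\zeta_2,0)^T$ contributes through the $\bb1$-column of $\hat A_+$; this is precisely the relation defining $\widetilde V_1 = (0,1,(1+\bar\lambda)^{-1})^T$. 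Assembling: the normalization of \cite{GZ,PZ,BHRZ} initializes $\widetilde W_1^+$ at $x=+\infty$ on this adjoint unstable direction, and propagating to $x=L$ introduces only (i) the $O(e^{-\eta L})$ error from $\tilde\Theta(L)$ and the conjugator $Q(L)$, (ii) an $O(e^{-\eta L})$ error from the off-diagonal block coupling integrated over $[L,\infty)$ against the spectral gap, and (iii) an $o(1)$-as-$v_+\to 0$ error from $v_+\theta_+$; combining and absorbing the $v_+\to 0$ error into the statement "as $v_+\to 0$" gives \eqref{wcon} with $C,\eta$ independent of $L$.

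The main obstacle I anticipate is bookkeeping the \emph{order of limits} so that the constants $C,\eta$ in \eqref{wcon} genuinely do not depend on $L$: one must arrange the estimate so that the $L$-dependence appears only through decaying exponentials $e^{-\eta L}$ with a fixed rate coming from the uniform spectral gap \eqref{fneg} and from \eqref{Thetabd}, while the $v_+$-dependence is swept entirely into the "$v_+\to 0$" qualifier; in particular, care is needed because $\theta_+$ and the conjugating matrix $P$ depend on both $L$ and $v_+$, and one needs the uniform-in-$v_+$ bound $|Q(x,\lambda)|\le C(L)e^{-\hat\eta x}$ of \eqref{qdecay} with $C(L)$ controlled (indeed $C(L)e^{-\hat\eta L}$ bounded) as $L$ grows. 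A secondary subtlety is justifying that the off-diagonal coupling of the adjoint block-triangular system contributes only an $O(e^{-\eta L})$ perturbation to the adjoint unstable subspace — this is a standard stable/unstable subspace perturbation estimate (as in \cite{MZ.3,PZ}) given the uniform gap, but it must be stated with the right uniformity.
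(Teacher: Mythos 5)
Your overall framework is the same as the paper's (pass to the block lower-triangular adjoint system in the $\widetilde Z$-coordinates, resolve the slow block by the conjugation lemma, then recover the fast component), but there is a genuine gap exactly at the crux of the lemma: the fast component of the decaying mode.

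You assert that the decaying-at-$+\infty$ solution in $\widetilde Z$-coordinates has the form $(\zeta_1,\zeta_2,0)^T$ ``up to an exponentially small correction.'' This is false. In the block lower-triangular system \eqref{dualtri2} the scalar fast component $\tilde q := (\widetilde Z)_3$ obeys $\tilde q' = -\bb1^T\tilde z + a\,\tilde q$ with $\R a\ge 1/2$, so the unique decaying solution is the $O(1)$ Duhamel integral $\tilde q(x) = \int_x^\infty e^{\int_y^x a}\,\bb1^T\tilde z(y)\,dy$, forced by the slow components through the $O(1)$ coupling $-\bb1^T$. The third component is emphatically not small; indeed, at $x=+\infty$ the slow stable eigenvector of $-\tilde A_+^*$ converges (as $v_+\to 0$) to $(0,1,(\gamma+\bar\lambda)^{-1})^T$, with an $O(1)$ third entry. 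Your subsequent step — recovering the third component of $\widetilde W$ from the transformation $\widetilde W = R^*\widetilde Z$ — cannot repair this: the fast row of $R^*$ is $(0,0,1)$, so a vanishing third component in $\widetilde Z$ stays vanishing in $\widetilde W$, and no component is ``contributed through the $\bb1$-column of $\hat A_+$,'' since the coordinate change is a fixed linear map, not a solution operator.

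This matters because the entire content of the lemma is the boundary-layer mismatch encoded in \eqref{Alim+}: at $x=+\infty$ the fast rate is $\gamma+\bar\lambda$, while at any fixed $x$ after $v_+\to 0$ the fast rate is $1+\bar\lambda$. The value $(1+\bar\lambda)^{-1}$ at $x=L$ arises only by actually integrating the fast scalar ODE on the singular region $[L,\infty)$ and using dominated convergence: $\tilde q(L) = \int_L^\infty e^{\int_y^L a}\,\bb1^T\tilde z(y)\,dy \to \int_L^\infty e^{(1+\bar\lambda)(L-y)}\,dy\,(1+O(e^{-\eta L})) = (1+\bar\lambda)^{-1}(1+O(e^{-\eta L}))$, since the coefficient $a$ converges to $1+\bar\lambda$ pointwise in $y$ (but not uniformly up to $y=+\infty$). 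Omitting this calculation, your argument would if pushed through most naturally produce the wrong limiting vector $(0,1,(\gamma+\bar\lambda)^{-1})^T$ — i.e.\ the limit in the other order. (As a minor notational matter, the adjoint variable in the paper's convention is $\widetilde Z = R^*\widetilde W$, not $L^*\widetilde W$; this does not affect the substance since $L,R$ are mutually inverse perturbations of the identity, but it compounds the confusion in your reconstruction step.)
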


\begin{corollary}\label{matching2}
Under the hypotheses of Lemma \ref{matching},
\begin{equation}\label{limcon}
|\tilde W_1^{0+}(L)-\widetilde V_1| \le Ce^{-\eta L}
\end{equation}
and
\begin{equation}\label{wcon2}
|\widetilde W^{+}_1(L) -\widetilde W^{0+}_1(L)|\le Ce^{-\eta L}
\end{equation}
as $v_+\to 0$, uniformly in $\lambda$, where $C$, $\eta>0$ are
independent of $L$ and $\widetilde W_1^{0+}$ is the solution
of the limiting adjoint eigenvalue system
appearing in definition \eqref{duallimD} of $D^0$.
\end{corollary}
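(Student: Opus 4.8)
The plan is to derive both bounds from Lemma \ref{matching}, the only genuinely new ingredient being a direct analysis of the limiting adjoint mode $\widetilde W_1^{0+}$, which, in contrast to the $v_+>0$ family, sees a coefficient matrix that is already ``regular'' at $x=+\infty$.

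First I would establish \eqref{limcon}. Since $\bV_0(x)=\tfrac12(1-\tanh(x/2))$ and $f_0(\bV_0)=-\tanh(x/2)$, one has $|A^0(x,\lambda)-A^0_+(\lambda)|\le C(1+|\lambda|)e^{-x}$, and hence $|A^0(x,\lambda)^*-A^0_+(\lambda)^*|\le C(1+|\lambda|)e^{-x}$, uniformly for $\lambda$ in a compact subset of $\R \lambda \ge 0$; that is, the limiting coefficient matrix converges to its endstate exponentially with a \emph{fixed} rate, the degeneration of the convergence rate as $v_+\to0$ being absent from the limit itself. I would then apply the conjugation lemma, Proposition \ref{conjugation}, on $x\ge L$ to the adjoint limiting equation $\widetilde W'=-A^0(x,\lambda)^*\widetilde W$, obtaining an invertible $P^0(x,\lambda)=I+Q^0(x,\lambda)$ with $|Q^0(x,\lambda)|\le Ce^{-\eta x}$ for $x\ge L$, where $C,\eta>0$ are uniform in $\lambda$ on compact sets and, by the rescaling device used in the proof of Lemma \ref{dyntranslem} (or simply by absorbing polynomial-in-$L$ growth into a slightly reduced rate), may be taken independent of $L\ge L_0$. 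Now $\widetilde V_1=(0,1,(1+\bar\lambda)^{-1})^T$ is a genuine eigenvector of $-A^0_+(\lambda)^*$ for the eigenvalue $0$, and on $\R \lambda \ge 0$ it is the only direction along which a solution of the conjugated constant-coefficient adjoint system has a finite nonzero limit at $x=+\infty$: the remaining eigendirections of $-A^0_+(\lambda)^*$ — the generalized eigenvector of its $2\times2$ Jordan block at $0$, and the eigenvector for $1+\bar\lambda$, which has $\R(1+\bar\lambda)\ge 1$ — produce solutions that grow linearly and exponentially, respectively. Hence the normalized solution $\widetilde W_1^{0+}$, defined (as in \cite{GZ,PZ,BHRZ}) to lie asymptotically at $x=+\infty$ in the direction $\widetilde V_1$, is carried by the conjugator onto the constant solution $\widetilde V_1$, so that $\widetilde W_1^{0+}(x)=P^0(x,\lambda)\widetilde V_1=\widetilde V_1+Q^0(x,\lambda)\widetilde V_1$; evaluating at $x=L$ yields \eqref{limcon}.

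Then \eqref{wcon2} is immediate from the triangle inequality, $|\widetilde W_1^+(L)-\widetilde W_1^{0+}(L)|\le|\widetilde W_1^+(L)-\widetilde V_1|+|\widetilde V_1-\widetilde W_1^{0+}(L)|\le Ce^{-\eta L}$, using Lemma \ref{matching} for the first term and \eqref{limcon} for the second, with $C,\eta>0$ the larger of the two constants, still independent of $L$ and uniform in $\lambda$, and the estimate holds as $v_+\to0$ since only the first term carries $v_+$-dependence. I do not expect a serious obstacle here, since the corollary is essentially bookkeeping on top of Lemma \ref{matching}, which contains the real work; the one point that needs care is the application of Proposition \ref{conjugation} at the non-hyperbolic endstate $A^0_+$, where one must note that the conjugation lemma requires only exponential decay of the perturbation and no spectral-gap hypothesis, and that $\widetilde W_1^{0+}$ is unambiguously singled out as the (unique up to scalar) bounded solution lying in the neutral direction $\widetilde V_1$, so that no linearly growing Jordan-block component can contaminate its value at $x=L$.
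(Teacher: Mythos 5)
Your proposal is correct and follows essentially the same route as the paper: apply Proposition \ref{conjugation} to the limiting adjoint system to identify $\widetilde W_1^{0+}$, up to an $Id+O(e^{-\eta x})$ conjugation, with the constant solution $\widetilde V_1$ of the frozen constant-coefficient adjoint system, giving \eqref{limcon}, and then combine with \eqref{wcon} by the triangle inequality to get \eqref{wcon2}. The extra detail you supply — verifying that $\widetilde V_1$ is the unique genuine zero-eigenvector of $-A^0_+(\lambda)^*$ and that the generalized and fast modes grow, and noting that the conjugation lemma needs only exponential decay of the perturbation (no spectral gap) — is sound and simply makes explicit what the paper leaves implicit.
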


\begin{proof}[Proof of Lemma \ref{matching}]
Making the coordinate-change
\begin{equation}\label{dualL}
\tilde Z:= R^* \tilde W,
\end{equation}
$R$ as in \eqref{WZ}, reduces the adjoint equation 
$\tilde W'=-A^*\tilde W$ to block lower-triangular form,
%
%
\begin{equation} \label{dualtri2}
\begin{aligned}
\tilde Z'&=-\tilde A^* \tilde Z\\
&=
\begin{pmatrix}
-\bar \lambda J^T +
(\lambda v_+\bb1 \theta_+ - \bb1 \tilde \Theta)^* &  0 \\
-\bb1^T &  -f(\hat v)+\bar \lambda +\bar \lambda v_+ (\theta_+\bb1)^* 
-(\tilde \Theta \bb1)^* \\
\end{pmatrix}Z,
\end{aligned}
\end{equation}
with `` $\bar{ }$ '' denoting complex conjugate.

Denoting by $\tilde V^+_1$ a suitably normalized 
element of the one-dimensional (slow) stable subspace
of $-\tilde A^*$, we find, similarly as in the discussion of Section
\ref{redEvans} that, without loss of generality, 
\begin{equation}\label{dualVlim}
\tilde V^+_1 \to (0, 1, (\gamma+\bar \lambda)^{-1})^T
\end{equation} 
as $v_+\to 0$, while the associated eigenvalue $\tilde \mu_1^+\to 0,$
uniformly for $\lambda$ on an compact subset of $\R \lambda\ge 0$.
The dual mode $\tilde Z_1^+=R^* \tilde W_1^+$ is uniquely determined
by the property that it is asymptotic as $x\to +\infty$
to the corresponding constant-coefficient solution 
$e^{\tilde \mu_1^+}\tilde V_1^+$ 
(the standard normalization of \cite{GZ,PZ,BHRZ}).

By lower block-triangular form \eqref{dualtri2}, the equations
for the slow variable $\tilde z^T:=(\tilde Z_1, \tilde Z_2)$ decouples
as a slow system
\begin{equation} \label{dualslowsys}
\tilde z'= -\Big(\lambda (J+v_+\bb1 \theta_+) + \bb1 \tilde \Theta \Big)^* 
\tilde z
\end{equation}
dual to \eqref{slowsys}, with solution operator
\begin{equation}\label{dualslowsoln}
P^*(x,\lambda)^{-1} e^{-\bar \lambda (J+v_+\bb1 \theta_+)^*)(x-y)}
P(y,\lambda)^{*}
\end{equation}
dual to \eqref{slowsoln}, i.e. (fixing $y=L$, say), solutions of general form
\begin{equation}\label{genform}
\tilde z(\lambda,x)=
P^*(x,\lambda)^{-1} e^{-\bar \lambda (J+v_+\bb1 \theta_+)^*)(x-y)}
\tilde v,
\end{equation}
$\tilde v \in \C^2$ arbitrary.

Denoting by
$$
\tilde Z_1^+(L):=R^*\tilde W_1^+(L),
$$
therefore, the unique (up to constant factor) decaying solution
at $+\infty$, and 
$\tilde v_1^+:=((\tilde V_1^+)_1 , (\tilde V_1^+)_2)^T$, 
we thus have evidently
$$
\tilde z_1^+(x,\lambda)=
P^*(x,\lambda)^{-1} e^{-\bar \lambda (J+v_+\bb1 \theta_+)^*)x}
\tilde v_1^+,
$$
which, as $v_+\to 0$, is uniformly bounded by
\begin{equation}\label{weakexp}
|\tilde z_1^+(x,\lambda)|\le Ce^{\epsilon x}
\end{equation}
for arbitrarily small $\epsilon>0$
and, by \eqref{dualVlim}, converges for $x$ less than or equal to
any fixed $X$ simply to 
\begin{equation}\label{simplelim}
\lim_{v_+\to 0}\tilde z_1^+(x,\lambda)=
P^*(x,\lambda)^{-1} (0,1)^T.
\end{equation}

Defining by $\tilde q:=(\tilde Z_1^+)_3$
the fast coordinate of $\tilde Z_1^+$, we have, by \eqref{dualtri2},
$$
\tilde q'
+\Big(f(\hat v)-\bar \lambda -(\lambda v_+ \theta_+\bb1 + \tilde \Theta \bb1)^* \Big)
\tilde q=
\bb1^T \tilde z_1^+,
$$
whence, by Duhamel's principle, any decaying solution is given by
$$
\tilde q(x,\lambda)=
\int_x^{+\infty} e^{\int_y^x a(z,\lambda, v_+)dz}\bb1^T z_1^+(y) \, dy,
$$
where
$$
a(y,\lambda,v_+):=
-\Big(f(\hat v)-\bar \lambda -(\lambda v_+ \theta_+\bb1 + \tilde \Theta \bb1)^* \Big).
$$
Recalling, for $\R \lambda \ge 0$, that $\R a \ge 1/2$, combining
\eqref{weakexp} and \eqref{simplelim}, 
and noting that $a$ converges uniformly on $y\le Y$ as $v_+\to 0$ for 
any $Y>0$ to 
$$
\begin{aligned}
a_0(y, \lambda)&:=
-f_0(\hat v)+\bar \lambda 
+(\tilde \Theta_0 \bb1)^* \\
&=  (1+\bar \lambda)
+O(e^{-\eta y})
\end{aligned}
$$
we obtain by the Lebesgue
Dominated Convergence Theorem that
$$
\begin{aligned}
\tilde q(L,\lambda)&\to  
\int_L^{+\infty} e^{\int_y^L a_0(z,\lambda)dz}\bb1^T (0,1)^T \, dy\\
&=
\int_L^{+\infty} 
e^{(1+\bar \lambda)(L-y)+ \int_y^L O(e^{-\eta z})dz }
\, dy\\
&=
(1+\bar \lambda)^{-1}(1 +O(e^{-\eta L})).
\end{aligned}
$$
Recalling, finally, \eqref{simplelim}, and the fact that
$$
|P-Id|(L,\lambda), \,  |R-Id|(L,\lambda) \le Ce^{-\eta L}
$$
for $v_+$ sufficiently small, we obtain \eqref{wcon} as claimed.
\end{proof}

\begin{proof}[Proof of Corollary \ref{matching2}]
Applying Proposition \ref{conjugation} to the limiting adjoint system 
$$
\tilde W'=-(A^0)^* \tilde W=
\begin{pmatrix}0 & 0 & 0\\-\bar \lambda & 0 & 0\\ 
-1& -1 & 1+\bar \lambda \end{pmatrix}
\tilde W + O(e^{-\eta x})\tilde W,
$$
we find that, up to an $Id +O(e^{-\eta x})$ coordinate change,
$\tilde W_1^{0+}(x)$ is given by the exact solution
$\tilde W\equiv \tilde V_1$ of the limiting, constant-coefficient
system
$$
\tilde W'=-(A^0)^* \tilde W=
\begin{pmatrix}0 & 0 & 0\\-\bar \lambda & 0 & 0\\ 
-1& -1 & 1+\bar \lambda \end{pmatrix}
\tilde W.
$$
This yields immediately \eqref{limcon},
which, together with \eqref{wcon}, yields \eqref{wcon2}.
\end{proof}

\begin{remark}\label{num}
Noting that \eqref{limcon} is sharp, we see 
from \eqref{wcon2} that the error between $\tilde W_1^+(L)$
and $\tilde W_1^{0+}(L)$ is already within the error tolerance 
of the numerical scheme used to approximate $D^0$, 
in which $\widetilde W^{0+}_1$ is
initialized at $x=L$ with approximate value $\tilde V_1$
\cite{BrZ,PZ,BHRZ}.
Thus, so long as the flow on the regular region $x\le L$
well-approximates the exact limiting flow as $v_+\to 0$, 
we can expect convergence of $D$ to $D^0$ based on the known
convergence of the numerical approximation scheme.
\end{remark}

\subsection{Convergence to $D^0$}\label{convergence}

As hinted by Remark \ref{num}, the rest of our analysis
is standard if not entirely routine.

\begin{lemma}\label{regconj}
On $x\le L$ for any fixed $L>0$, there exists a coordinate-change
$W=TZ$ conjugating \eqref{evans_ode} to the limiting equations
\eqref{limevans_ode}, $T=T(x,\lambda, v_+)$, satisfying a uniform bound
\begin{equation}\label{Tbd}
|T-Id|\le C(L)v_+
\end{equation}
for all $v_+> 0$ sufficiently small.
\end{lemma}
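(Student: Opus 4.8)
The plan is to observe that on the regular region $x\le L$ the coefficient matrix $A(x,\lambda)$ of \eqref{evans_ode} differs from the limiting matrix $A^0(x,\lambda)$ of \eqref{limevans_ode} by an error that is uniformly small in $v_+$, with a bound of the form $|A(x,\lambda)-A^0(x,\lambda)|\le C(L)v_+$ for $x\le L$, and then to invoke an abstract conjugation lemma to turn this coefficient-level smallness into a conjugating transformation $T=\mathrm{Id}+O(v_+)$ at the level of solutions. First I would verify the coefficient estimate directly from the explicit formulas: comparing \eqref{evans_ode}--\eqref{feq2} with \eqref{limevans_ode}--\eqref{limfeq}, the only entries that depend on $v_+$ are $\lambda\bV$ versus $\lambda\bV_0$ and $f(\bV)-\lambda$ versus $f_0(\bV_0)-\lambda$. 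The difference $|\bV(x)-\bV_0(x)|$ on $x\le L$ is $O(v_+)$ by continuous dependence of the profile ODE \eqref{profeq} on the parameter (using Proposition \ref{profdecay} to control the flow and the fact that $H(v,v_+)\to v(v-1)$ uniformly on $v\ge\bV_0(L)>0$ with error $O(v_+)$, since $a\sim v_+^\gamma\to 0$); and the difference $|f(\bV)-f_0(\bV_0)|$ is controlled by writing $f(\bV)-f_0(\bV_0)=(f(\bV)-f(\bV_0))+(f(\bV_0)-f_0(\bV_0))$, where the first term is $O(v_+)$ because $|df/d\bV|\le C\bV^{-1}\le C\bV_0(L)^{-1}$ is bounded on the regular region, and the second is $O(v_+^\gamma\bV_0^{-\gamma})=O(v_+^\gamma)$ on $\bV_0\ge\bV_0(L)$ directly from \eqref{feq2}. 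Combining, $|A-A^0|\le C(L)v_+$ uniformly on $x\le L$ and on compact $\lambda$-sets.

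The second step is to promote this to a statement about solution operators. I would apply a conjugation lemma in the spirit of Proposition \ref{conjugation} but in the ``regular-perturbation-in-a-parameter'' form rather than the ``exponentially-decaying-in-$x$'' form: given $A(x,\lambda,v_+)=A^0(x,\lambda)+E(x,\lambda,v_+)$ with $\sup_{x\le L}|E|\le C(L)v_+$, there is a globally defined invertible $T(x,\lambda,v_+)$ on $x\le L$, solving the matrix ODE $T'=A T - T A^0$ with $T(L)=\mathrm{Id}$ (or any fixed normalization), such that $W=TZ$ carries $W'=AW$ to $Z'=A^0Z$, and $|T-\mathrm{Id}|\le C(L)v_+$ by Gronwall on the finite interval $[{-\infty},L]$ — here one must check integrability of the error as $x\to-\infty$, which holds because both $A$ and $A^0$ converge exponentially to their $x=-\infty$ limits at a rate independent of $v_+$ (Proposition \ref{profdecay} and \eqref{decaybd_2}), so $E$ also decays exponentially there and the Gronwall constant is finite and $v_+$-independent. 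This is exactly the content cited as ``standard methods of \cite{MZ.3,PZ,BHRZ}'' and one could alternately quote the tracking/conjugation results there verbatim; the key quantitative output is the bound \eqref{Tbd}.

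The main obstacle, and the only place requiring care, is the behavior near $x=-\infty$: one needs the conjugator $T$ to exist and remain $\mathrm{Id}+O(v_+)$ all the way to $-\infty$, not merely on a compact $x$-interval, since otherwise $T$ would not map the limiting decaying/growing modes $W_1^{0-}$ to the corresponding exact modes. The resolution is that the convergence of $\bV$ to $v_-=\bV_0({-\infty})=1$ is exponential uniformly in $v_+$ (this is the ``regular perturbation'' claim of Section \ref{limeqs}, backed by Proposition \ref{profdecay}), so the coefficient error $E(x,\lambda,v_+)$ is not just $O(v_+)$ pointwise but $O(v_+)\cdot$(exponentially small in $|x|$) for $x$ very negative; hence $\int_{-\infty}^L|E(x,\cdot)|\,dx\le C(L)v_+$ is finite, Gronwall closes, and $T$ extends with the stated bound. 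Everything else — analyticity of $T$ in $\lambda$ on compact subsets of $\R\lambda\ge 0$, and the fact that $T$ respects the unstable manifold at $-\infty$ — follows from analytic dependence of solutions of linear ODEs on parameters together with the uniqueness of the decaying subspace, so no further work is needed beyond recording it.
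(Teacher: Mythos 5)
Your coefficient-level comparison $|A-A^0|\le C(L)v_+$ on $x\le L$ is correct, and your handling of the compact piece near $x=L$ by a Gronwall/homological-equation argument mirrors what the paper does on $[0,L]$. The gap is in how you treat the noncompact tail $x\to-\infty$, and it is a genuine one.

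First, your claim that the error $E=A-A^0$ \emph{decays} exponentially as $x\to-\infty$ is false. Both $A$ and $A^0$ converge exponentially to constant matrices $A_-$ and $A^0_-$, but these limits are \emph{not equal}: comparing \eqref{feq} at $\bV=1$ with \eqref{limfeq} at $\bV_0=1$ gives $f(1)-f_0(1)=-a\gamma$, a nonzero constant of size $O(v_+^\gamma)$. Hence $E(x)\to A_--A^0_-\ne 0$, so $E$ is uniformly $O(v_+)$ but is certainly not integrable on $(-\infty,L]$, and the bound $\int_{-\infty}^{L}|E|\,dx\le C(L)v_+$ that your Gronwall argument requires does not hold.

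Second, and more fundamentally, even if $E$ did decay exponentially, integrating the homological equation $S'=AS-SA^0+E$ backward from $S(L)=0$ with a one-sided Gronwall estimate cannot yield a bounded $S$ on $(-\infty,L]$. The linear operator $S\mapsto AS-SA^0$ is a Sylvester operator whose spectrum near $x=-\infty$ consists of the differences $\mu_i-\nu_j$ with $\mu_i\in\sigma(A_-)$, $\nu_j\in\sigma(A^0_-)$. Since $A^0_-$ (see \eqref{lim-}) has eigenvalues of both positive and negative real part for $\R\lambda\ge 0$, this set contains values of strictly positive \emph{and} strictly negative real part, so the homogeneous solution operator grows exponentially in both $x$-directions. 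A one-sided Gronwall bound therefore picks up a factor $e^{c(L-x)}$ that swamps any decay of $E$, and the resulting estimate for $|S(x)|$ diverges as $x\to-\infty$. What is needed is precisely the Gap-Lemma-style construction of Proposition \ref{easygap} applied to the Sylvester system: split into the stable and unstable eigenprojections of the limiting operator, integrate the stable part from $-\infty$ forward and the unstable part from a finite basepoint backward, and thereby select the unique bounded solution. This is the content of the Convergence Lemma of \cite{PZ}, which the paper invokes on $(-\infty,0]$ before using Gronwall only on the \emph{compact} interval $[0,L]$. Your proposal replaces this two-sided, projection-based argument by a one-sided Gronwall estimate on a noncompact interval, which does not close.
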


\begin{proof}
For $x\in (-\infty, 0]$, this is a consequence of the
{\it Convergence Lemma} of \cite{PZ}, a variation on
Proposition \ref{conjugation}, together with uniform
convergence of the profile and eigenvalue equations.
For $x\in [0,L]$, it is essentially continuous dependence;
more precisely, observing that
$ |A-A^0|\le C_1(L)v_+$ for $x\in [0,L]$, 
setting $S:=T-Id$, and writing the
homological equation expressing conjugacy of \eqref{evans_ode}
and \eqref{limevans_ode}, we obtain
$$
S'- (AS-SA^0)= (A-A^0),
$$
which, considered as an inhomogeneous linear matrix-valued equation, yields
an exponential growth bound 
\[
S(x)\le e^{Cx}(S(0)+ C^{-1}C_1(L)v_+)\]
for some $C>0$, giving the result.
\end{proof}

\begin{proof}[Proof of Theorem \ref{mainthm}]
Lemma \ref{regconj}, together with convergence as $v_+\to 0$
of the unstable subspace of $A_-$ to the unstable subspace
of $A^0_-$ at the same rate $O(v_+)$ (as follows by spectral separation
of the unstable eigenvalue of $A^0$ and standard matrix
perturbation theory) yields
\begin{equation}\label{Wbd}
|W_1^-(0,\lambda)- W_1^{0-}(0,\lambda)|\le C(L)v_+.
\end{equation}

Likewise, Lemma \ref{regconj} gives
\begin{equation}\label{tildeWbd}
\begin{aligned}
|\tilde W_1^+(0,\lambda)- \tilde W_1^{0+}(0,\lambda)|&\le
C(L)v_+
|\tilde W_1^+(0,\lambda)|\\
&\quad +
|S_0^{L \to 0}|
|\tilde W_1^+(L,\lambda)- \tilde W_1^{0+}(L,\lambda)|,
\end{aligned}
\end{equation}
where $S_0^{y\to x}$ denotes the solution operator of
the limiting adjoint eigenvalue equation $\tilde W'=-(A^0)^*\tilde W$.
Applying Proposition \ref{conjugation} to the limiting system, we obtain
$$
|S_0^{L\to 0}|\le C_2e^{-A^0_+ L}\le C_2L|\lambda|
$$
by direct computation of $e^{-A^0_+ L}$, where $C_2$ is independent of $L>0$.
Together with \eqref{wcon2} and \eqref{tildeWbd}, this gives
$$
|\tilde W_1^+(0,\lambda)- \tilde W_1^{0+}(0,\lambda)|\le
C(L)v_+
|\tilde W_1^+(0,\lambda)| + L|\lambda|C_2Ce^{-\eta L},
$$
hence, for $|\lambda|$ bounded,
\begin{equation}\label{lastbd}
\begin{aligned}
|\tilde W_1^+(0,\lambda)- \tilde W_1^{0+}(0,\lambda)|&\le
C_3(L)v_+ |\tilde W_1^{0+}(0,\lambda)| + LC_4e^{-\eta L}\\
&\le
C_5(L)v_+  + LC_4e^{-\eta L}.\\
\end{aligned}
\end{equation}
Taking first $L\to \infty$ and then $v_+\to 0$, we obtain
therefore convergence of $W^+_1(0,\lambda)$ and $\tilde W_1^+(0,\lambda)$ to
$W^{0+}_1(0,\lambda)$ and $\tilde W_1^{0+}(0,\lambda)$, yielding the result
by definitions \eqref{adjevans} and \eqref{duallimD}.
\end{proof}

%
\section{Numerical convergence}\label{quant}

Having established analytically convergence of
$D$ to $D^0$ as $M\to \infty$, 
we turn finally to numerics to obtain quantitative information
yielding a concrete stability threshold.
Specifically, for fixed $\gamma$, we compute the ``Rouch\'e bound''
$v_+$ at which the maximum relative error $|D-D^0|/|D^0|$ over the
semicircular contour $\partial \{\R \lambda \ge 0, \, |\lambda|\le 10\}$
around which we perform our winding number calculations
becomes $1/2$.
Recall that Rouch\'e's Theorem guarantees for relative error $<1$
that the winding number of $D$ is equal to the winding number of $D^0$,
which we have shown to be zero, hence we may conservatively conclude stability
for $v_+$ less than or equal to this bound, or $M$ greater than or
equal to the corresponding Mach number.
Computations are performed using the algorithm of \cite{BHRZ};
results are displayed in Table \ref{comparison3}.

More detailed results are displayed
for the monatomic gas case, $\gamma=1.66...$, 
in Table \ref{comparison2}.
Results are similar for other $\gamma\in [1,3]$,
as may be seen by comparing Figures \ref{zero2}, \ref{zero1a},
and \ref{zero3}.

\begin{table}
\begin{center}
\begin{tabular}{| c | c | c | c |}
\hline
  & & Relative & Mach\\
$\gamma$ & $v_+$ & Error& Number \\
\hline
3.0    &     1.27e-3 &   .5009 &     12765\\
2.5   &      1.36e-3 &   .5006 &     2423\\
2.0   &      1.49e-3 &   .5001 &     474\\
1.5  &     1.75e-3   &   .4999 &    95.5\\
1.0  &     2.8e-3   &   .4995 &     18.9\\
\hline
\end{tabular}
\bigskip
\caption{
Rouch\'e bounds for various $\gamma$.}
%
\label{comparison3}
\end{center}
\end{table}

\begin{table}
\begin{center}
\begin{tabular}{| c | c | c | c |}
\hline
& Mach & Relative & Absolute \\
$v_+$ & Number & Difference & Difference \\
\hline
1.0(-6) & 7.71(4) & 0.1221 & 0.0601\\
1.0(-5) & 1.13(4) & 0.1236 & 0.1445\\
1.0(-4) & 1.64(3) & 0.1487 & 0.4714\\
1.0(-3) & 2.44(2) & 0.4098 & 1.3464\\
1.0(-2) & 36.1 & 0.9046 & 2.8253\\
1.0(-1) & 5.50 & 1.2386 & 3.8688\\
\hline
\end{tabular}
\bigskip
\caption{
Maximum relative and absolute differences between $D$ and $D^0$,
for $\gamma=1.66...$ and $\lambda$ on the semicircle of radius $10$.}
\label{comparison2}
\end{center}
\end{table}

From the quantitative gap and conjugation estimates
given in Appendix \ref{quantlem}, from which follows
also a quantitative version of the Convergence Lemma
of \cite{PZ}, one could in principle establish quantitative
convergence rates for $|D-D^0|$, by tracking constants
carefully through the estimates of the previous sections.  
Indeed, one could do much better than the rather crude
bounds stated for the general case by taking into account
the eigenstructure of the actual matrices $A_\pm$, $A^0_\pm$ 
appearing in our analysis.
That is, there are contained in our analysis, as in the
study of \cite{BHRZ}, all of the ingredients
needed for a numerical proof.
Given the fundamental nature of the problem studied,
this would be a very interesting program to carry out.

\section{Discussion and open problems}\label{discussion}

Besides long-time stability, our results have application also to
existence of shock layers in
the small-viscosity limit, which likewise reduces
to the question of stability of the Evans function
\cite{R,GMWZ}.
Indeed, spectral stability has been a key missing piece
in several directions \cite{Z.2,Z.3}.
Our methods should have application also to spectral stability of
large-amplitude noncharacteristic boundary layers, completing the
investigations of \cite{SZ,GR,MeZ}.
It may be hoped that they will extend also to full gas dynamics
and multi-dimensions, two important directions for further investigation.
As discussed in the text, the problems of numerical proof and
of stability in the large-$\gamma$ limit are two other interesting
directions for further study.

More speculatively, our results suggest the possibility of
a large-variation version of the results obtained by quite
different methods in \cite{BiBr} on general
viscous solutions (including not only noninteracting shocks, but
shocks, rarefactions, and their interactions),
and, through the physical insight provided into the high-Mach
number limit, perhaps even a hint toward possible methods of analysis.
This would be an extremely interesting direction for further investigation.

\appendix

\section{Proofs of Preliminary Estimates}\label{basicproof}
\begin{proof}[Proof of Proposition \ref{profdecay}]
Existence and monotonicity follow trivially
by the fact that \eqref{profeq} is a scalar
first-order ODE with convex righthand side.
Exponential convergence as $x\to +\infty$ follows, for example,
by the computation
\begin{align*}
H(v, v_+) &= v\Big((v-1) -
\frac{(v_+-1)(v^{-\gamma}-1)}{v_+^{-\gamma}-1}\Big)\\
&= v\Big((v-v_+) + \Big(\frac{1-v_+}{1-v_+^{\gamma}}\Big)
\Big(\big(\frac{v_+}{v}\big)^\gamma -1\Big)\Big)\\
&= (v-v_+)\Big(v - \Big(\frac{1-v_+}{1-v_+^{\gamma}}\Big)
\Big(\frac{1 - \big(\frac{v_+}{v}\big)^{\gamma}}{1 -
\big(\frac{v_+}{v}\big)}\Big)\Big),\\
\end{align*}
yielding
\[
v- \gamma \le \frac{H(v,v_+)}{v-v_+}\le v-(1-v_+)
\]
by the elementary estimate $1\le \frac{1-x^\gamma}{1-x}\le \gamma$
for $0\le x\le 1$.  Convergence as $x\to -\infty$ follows by
a similar, but simpler computation; see \cite{BHRZ}.
\end{proof}

\begin{lemma}
The following identity holds for $\R \lambda \geq 0$:
\begin{align}
(\R(\lambda) + |\I (\lambda)|) & \ip \bV |u|^2 - \frac{1}{2}\ip \bV_x |u|^2 + \ip |u'|^2\notag\\
 &\leq \sqrt{2} \ip \frac{h(\bV)}{\bV^\gamma} |v'| |u| +  \ip \bV |u'||u|\label{id1}.
\end{align}
\end{lemma}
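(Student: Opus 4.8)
The plan is to test the second integrated eigenvalue equation \eqref{ep:2} against $\bV\bar u$ and integrate, in the spirit of the energy estimates of \cite{MN}. Throughout, $(u,v)$ denotes the eigenfunction of \eqref{ep} under consideration: for $\R(\lambda)\ge 0$ a nontrivial decaying solution can occur only when $\lambda\ne 0$, since \eqref{ep} has no eigenvalue at the origin, and then $(u,v)$ together with its first derivative decays exponentially as $x\to\pm\infty$ by the spectral picture recalled in Section \ref{evanssec}, so that all boundary terms arising below vanish. We also use that $\bV$, $\bV_x$, and $h(\bV)/\bV^{\gamma}$ are uniformly bounded along the profile, since $\bV\in(v_+,1]$ by Proposition \ref{profdecay}.

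First I would multiply \eqref{ep:2} by $\bV\bar u$ and integrate over $\mathbb{R}$; using $\frac{h(\bV)}{\bV^{\gamma+1}}\,\bV=\frac{h(\bV)}{\bV^{\gamma}}$, $\frac{u''}{\bV}\,\bV=u''$, and the integration by parts $\ip u''\bar u=-\ip|u'|^2$ in the resulting last term, this produces the complex identity
\[
\lambda\ip\bV|u|^2+\ip|u'|^2=\ip\frac{h(\bV)}{\bV^{\gamma}}v'\bar u-\ip\bV u'\bar u=:Z_1-Z_2 .
\]

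Next I would take real and imaginary parts of this identity. Since $\ip\big(\bV|u|^2\big)'=0$ expands to $\ip\bV_x|u|^2+2\R\ip\bV u'\bar u=0$, we have $\R Z_2=-\tfrac12\ip\bV_x|u|^2$, so the real part reads
\[
\R(\lambda)\ip\bV|u|^2-\tfrac12\ip\bV_x|u|^2+\ip|u'|^2=\R Z_1 ,
\]
while, because $\ip\bV|u|^2\ge 0$, the imaginary part gives
\[
|\I(\lambda)|\ip\bV|u|^2=\big|\I Z_1-\I Z_2\big|\le|\I Z_1|+|\I Z_2| .
\]
Adding these,
\[
\big(\R(\lambda)+|\I(\lambda)|\big)\ip\bV|u|^2-\tfrac12\ip\bV_x|u|^2+\ip|u'|^2\le\R Z_1+|\I Z_1|+|\I Z_2| .
\]

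Finally I would bound the right-hand side. The elementary inequality $\R z+|\I z|\le\sqrt{2}\,|z|$ (Cauchy--Schwarz) gives $\R Z_1+|\I Z_1|\le\sqrt{2}\,|Z_1|$, and trivially $|\I Z_2|\le|Z_2|$; together with the pointwise estimates $|Z_1|\le\ip\frac{h(\bV)}{\bV^{\gamma}}|v'||u|$ and $|Z_2|\le\ip\bV|u'||u|$ this is precisely \eqref{id1}. The first pointwise estimate uses $h(\bV)\ge 0$ along the profile: for $\gamma\ge 1$ and $\bV\in(v_+,1]$ one has $h(\bV)=\bV^{\gamma}(a+1-\bV)+a(\gamma-1)\ge a\bV^{\gamma}>0$ since $a>0$ and $\bV\le 1$. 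I expect the only genuinely delicate steps to be the justification of the boundary-term-free integration by parts --- namely the exponential decay of eigenfunctions, valid on $\R\lambda\ge 0$ because \eqref{ep} has no eigenvalue at $\lambda=0$ and the decay structure of Section \ref{evanssec} applies for $\lambda\ne 0$ --- and the verification of the sign of $h$; the remaining manipulations are routine bookkeeping.
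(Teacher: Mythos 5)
Your argument is correct and is essentially the same as the paper's: multiply \eqref{ep:2} by $\bV\bar u$, integrate, take real and imaginary parts, add them, and apply $\R z+|\I z|\le\sqrt{2}|z|$ to the term $\ip \frac{h(\bV)}{\bV^\gamma}v'\bar u$. You make explicit two points the paper leaves implicit --- that $h(\bV)\ge 0$ on $[v_+,1]$ (needed to drop the absolute value in the final bound) and that the boundary terms in the integrations by parts vanish --- both correctly justified.
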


\begin{proof}
We multiply \eqref{ep:2} by $\bV {\bar u}$ and integrate along $x$.  This yields
\[
\lambda \ip \bV |u|^2 + \ip \bV u'\bar{u} + \ip |u'|^2 = \ip \frac{h(\bV)}{\bV^\gamma} v'\bar{u} .
\]
We get \eqref{id1} by taking the real and imaginary parts and adding them together, and noting that $|\R(z)| + |\I(z)| \leq \sqrt{2}|z|$.\qed
\end{proof}

\begin{lemma}
\label{kawashima}
The following identity holds for $\R \lambda \geq 0$:
%
%
\begin{equation}
\label{id3}
\ip |u'|^2 = 2\R(\lambda)^2\ip|v|^2 + \R(\lambda)\ip \frac{|v'|^2}{\bV} + \frac{1}{2} \ip \left[\frac{h(\bV)}{\bV^{\gamma+1}} + \frac{a\gamma}{\bV^{\gamma+1}} \right] |v'|^2
\end{equation}
\end{lemma}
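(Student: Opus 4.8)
The plan is to prove \eqref{id3} by combining two scalar identities extracted from the integrated eigenvalue equations \eqref{ep:1}--\eqref{ep:2}, in both of which $u$ is eliminated in favour of $v$ through \eqref{ep:1}. Throughout, $(v,u)$ denotes the eigenfunction attached to $\lambda$, so that $v$, $u$ and their derivatives decay exponentially as $x\to\pm\infty$ and every boundary term produced by integration by parts vanishes. From \eqref{ep:1} one has $u'=\lambda v+v'$ and hence $u''=\lambda v'+v''$; moreover, since $\ip(|v|^2)'=0$, the quantity $\ip v'\bar v$ is purely imaginary, and so is $\ip v\overline{v'}=-\ip v'\bar v$.

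First I would use \eqref{ep:1} alone to expand
\begin{equation*}
\ip|u'|^2=\ip|\lambda v+v'|^2=|\lambda|^2\ip|v|^2+\ip|v'|^2+2\R\big(\lambda\ip v\overline{v'}\big)=|\lambda|^2\ip|v|^2+\ip|v'|^2+2\I(\lambda)\,\I\big(\ip v'\bar v\big),
\end{equation*}
the last equality using that $\ip v\overline{v'}$ is purely imaginary. This isolates the ``imaginary cross term'' $2\I(\lambda)\,\I(\ip v'\bar v)$, which is to be removed at the final step.

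Second --- the substantive step --- I would multiply \eqref{ep:2} by $\overline{v'}$, integrate over $\mathbb{R}$, and take the real part. Using $u'=\lambda v+v'$, $u''=\lambda v'+v''$, integration by parts ($\ip u\overline{v'}=-\ip u'\bar v$ and, since $\bV$ is real, $\R\ip\frac{v''\overline{v'}}{\bV}=\frac12\ip\frac{\bV_x}{\bV^2}|v'|^2$ by integrating $\frac{d}{dx}\big(\frac{|v'|^2}{\bV}\big)$ to zero), and the reality of $\frac{h(\bV)}{\bV^{\gamma+1}}$, one is led to the secondary identity
\begin{equation*}
\ip|v'|^2+2\I(\lambda)\,\I\big(\ip v'\bar v\big)=\R(\lambda^2)\ip|v|^2+\R(\lambda)\ip\frac{|v'|^2}{\bV}+\ip\frac{h(\bV)}{\bV^{\gamma+1}}|v'|^2+\frac12\ip\frac{\bV_x}{\bV^2}|v'|^2.
\end{equation*}
The point is that the real parts of $\lambda\ip u\overline{v'}$ and of $\ip u'\overline{v'}$ together reproduce \emph{exactly} the same cross term $2\I(\lambda)\,\I(\ip v'\bar v)$ appearing in the first identity.

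Finally I would substitute the secondary identity into the first. The two cross terms cancel, $|\lambda|^2+\R(\lambda^2)=2\R(\lambda)^2$, and it then remains only to verify the pointwise algebraic identity
\begin{equation*}
\frac{h(\bV)}{\bV^{\gamma+1}}+\frac{\bV_x}{\bV^2}=\frac{a\gamma}{\bV^{\gamma+1}},
\end{equation*}
which follows by substituting the profile equation $\bV_x=H(\bV,v_+)=\bV\big(\bV-1+a(\bV^{-\gamma}-1)\big)$ from \eqref{profeq} together with the formula \eqref{f} for $h$ and simplifying. This turns $\frac{h(\bV)}{\bV^{\gamma+1}}+\frac12\frac{\bV_x}{\bV^2}$ into $\frac12\big(\frac{h(\bV)}{\bV^{\gamma+1}}+\frac{a\gamma}{\bV^{\gamma+1}}\big)$, giving \eqref{id3} exactly. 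The one real obstacle is the bookkeeping: arranging the integrations by parts so that the imaginary cross terms in the two identities match sign for sign --- which is what makes the final statement a genuine identity rather than merely an inequality --- together with the short but slightly delicate algebraic reduction via the profile ODE.
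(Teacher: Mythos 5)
Your proof is correct and takes essentially the same route as the paper: both test \eqref{ep:2} against $\bar v'$, take real parts, use $u'=\lambda v+v'$ together with integrations by parts, and simplify via the profile-ODE identity $h(\bV)/\bV^{\gamma+1}+\bV_x/\bV^{2}=a\gamma/\bV^{\gamma+1}$. The only difference is bookkeeping on the left-hand side $\R\big[\lambda\ip u\bar v'+\ip u'\bar v'\big]$: the paper recognizes it directly as $\ip|u'|^2-2\R(\lambda)^2\ip|v|^2$ by spotting $-\ip u\bar u''=\ip|u'|^2$, whereas you expand everything in $v$ and cancel the common cross term $2\I(\lambda)\,\I(\ip v'\bar v)$ explicitly against your first identity.
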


\begin{proof}
We multiply \eqref{ep:2} by ${\bar v'}$ and integrate along $x$.  This yields
\[
\lambda \ip u\bar{v}' + \ip u'\bar{v}' - \ip \frac{h(\bV)}{\bV^{\gamma+1}}|v'|^2 = \ip \frac{1}{\bV}u''\bar{v}' = \ip \frac{1}{\bV}(\lambda v' + v''){\bar v'}.
\]
Using \eqref{ep:1} on the right-hand side, integrating by parts, and taking the real part gives
\[
\R \left[ \lambda \ip u\bar{v}' + \ip u'\bar{v}'\right] = \ip \left[\frac{h(\bV)}{\bV^{\gamma+1}} + \frac{\bV_x}{2 \bV^2} \right] |v'|^2 + \R(\lambda)\ip \frac{|v'|^2}{\bV}.
\]
The right hand side can be rewritten as
%
%
\begin{equation}
\label{id3_1}
\R \left[ \lambda \ip u\bar{v}' + \ip u'\bar{v}'\right] = \frac{1}{2} \ip \left[\frac{h(\bV)}{\bV^{\gamma+1}} + \frac{a\gamma}{\bV^{\gamma+1}} \right] |v'|^2 + \R(\lambda)\ip \frac{|v'|^2}{\bV}.
\end{equation}
Now we manipulate the left-hand side.  Note that
\begin{align*}
\lambda \ip u\bar{v}' + \ip u'\bar{v}' &= (\lambda+\bar{\lambda}) \ip u\bar{v}' - \ip u(\bar{\lambda}\bar{v}' + \bar{v}'')\\
&= -2\R(\lambda) \ip u' \bar{v} - \ip u \bar{u}''\\
&= -2\R(\lambda) \ip (\lambda v + v') \bar{v} + \ip |u'|^2.
\end{align*}
Hence, by taking the real part we get
\[
\R \left[ \lambda \ip u\bar{v}' + \ip u'\bar{v}'\right] = \ip |u'|^2 - 2\R(\lambda)^2 \ip |v|^2.
\]
This combines with \eqref{id3_1} to give \eqref{id3}.\qed
\end{proof}

\begin{lemma}\label{Hlem}
For $h(\bV)$ as in \eqref{f}, we have
\begin{equation}
\label{id2}
\sup_{\bV} \left| \frac{h(\bV)}{\bV^\gamma}\right| = \gamma 
\frac{1-v_+}{1-v_+^\gamma}
\leq \gamma.
\end{equation}
\end{lemma}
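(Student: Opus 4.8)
The plan is to reduce everything to elementary one-variable calculus on the profile interval $\bV\in[v_+,1]$, on which $\bV$ varies. First I would use \eqref{f} and \eqref{RH} to rewrite
\begin{equation*}
g(\bV) := \frac{h(\bV)}{\bV^\gamma} = -\bV + a(\gamma-1)\bV^{-\gamma} + (a+1),
\end{equation*}
a smooth function of $\bV>0$. Differentiating gives $g'(\bV) = -1 - a\gamma(\gamma-1)\bV^{-\gamma-1}$, which is $\le -1<0$ for every $\gamma\ge 1$ since $a>0$. Hence $g$ is strictly decreasing on $[v_+,1]$, so its extrema over this interval are attained at the endpoints and $\sup_{\bV}|g(\bV)| = \max\{|g(v_+)|,|g(1)|\}$.

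Next I would evaluate the endpoints. At $\bV=1$ the terms collapse to $g(1) = -1 + a(\gamma-1) + (a+1) = a\gamma > 0$. At $\bV=v_+$, using the identity $a v_+^{-\gamma} = (1-v_+)/(1-v_+^\gamma)$ from \eqref{RH}, a short rearrangement yields
\begin{equation*}
g(v_+) = -v_+ + (\gamma-1)\frac{1-v_+}{1-v_+^\gamma} + v_+^\gamma\frac{1-v_+}{1-v_+^\gamma} + 1 = \gamma\,\frac{1-v_+}{1-v_+^\gamma},
\end{equation*}
the last step being the factorization of $(1-v_+)/(1-v_+^\gamma)$ after writing $1-v_+ = (1-v_+)(1-v_+^\gamma)/(1-v_+^\gamma)$. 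Since $g$ is decreasing and $g(1)=a\gamma>0$, $g$ is positive throughout $[v_+,1]$, so $|g|=g$ and the supremum equals $g(v_+)=\gamma(1-v_+)/(1-v_+^\gamma)$, giving the equality in \eqref{id2}.

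Finally, the inequality $\gamma(1-v_+)/(1-v_+^\gamma)\le\gamma$ is exactly the elementary estimate $\frac{1-x^\gamma}{1-x}\ge 1$ for $0\le x\le 1$, $\gamma\ge 1$ — the lower half of the bound $1\le\frac{1-x^\gamma}{1-x}\le\gamma$ already used in the proof of Proposition \ref{profdecay} — applied at $x=v_+$. There is no real obstacle here: the only mildly delicate point is the endpoint computation of $g(v_+)$, which is a routine algebraic simplification, and one should be careful that the supremum is understood over the profile range $[v_+,1]$ so that monotonicity of $g$ is enough to locate it.
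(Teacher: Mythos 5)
Your proof is correct and follows essentially the same route as the paper's: define $g(\bV) = h(\bV)\bV^{-\gamma}$, observe $g'(\bV) = -1 - a\gamma(\gamma-1)\bV^{-\gamma-1} < 0$ so the maximum over $[v_+,1]$ is attained at $\bV=v_+$, and simplify $g(v_+)$ using the Rankine--Hugoniot relation \eqref{RH}. You add the (welcome) detail of checking $g(1)=a\gamma>0$ so that $g>0$ on the whole interval and hence $|g|=g$, a point the paper's proof leaves implicit.
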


\begin{proof}
Defining
\begin{equation}\label{H}
g(\bV):=h(\bV)\bV^{-\gamma} = -\bV + a(\gamma-1)\bV^{-\gamma} + (a+1),
\end{equation}
we have $g'(\bV)= -1 -a\gamma(\gamma-1)\bV^{-\gamma-1}<0$ for $0<v_+\le \bV\le v_-= 1$,
hence the maximum of $g$ on $\bV\in [v_+,v_-]$ is achieved at
$\bV=v_+$.
Substituting \eqref{RH} into \eqref{H} and simplifying
yields \eqref{id2}.
\end{proof}

\begin{proof}[Proof of Proposition \ref{hf}]
Using Young's inequality twice on right-hand side of \eqref{id1} together with \eqref{id2}, we get
\begin{align*}
(\R&(\lambda) + |\I (\lambda)|) \ip \bV |u|^2 - \frac{1}{2}\ip \bV_x |u|^2 + \ip |u'|^2 \\
&\leq \sqrt{2} \ip \frac{h(\bV)}{\bV^\gamma} |v'| |u| +  \ip \bV |u'||u|\\
&\leq \theta \ip \frac{h(\bV)}{\bV^{\gamma+1}} |v'|^2 + \frac{(\sqrt{2})^2}{4\theta} \ip \frac{h(\bV)}{\bV^\gamma} \bV |u|^2 + \epsilon \ip \bV |u'|^2 + \frac{1}{4 \epsilon} \ip \bV |u|^2\\
&< \theta \ip \frac{h(\bV)}{\bV^{\gamma+1}} |v'|^2  + \epsilon \ip |u'|^2 + \left[\frac{\gamma}{2\theta} + \frac{1}{4 \epsilon}\right] \ip \bV |u|^2.
\end{align*}
Assuming that $0<\epsilon<1$ and $\theta = (1-\epsilon)/2$, this simplifies to
\begin{align*}
(\R(\lambda) + |\I (\lambda)|) & \ip \bV |u|^2 + (1-\epsilon) \ip |u'|^2 \\
&<\frac{1-\epsilon}{2} \ip \frac{h(\bV)}{\bV^{\gamma+1}} |v'|^2 +  \left[\frac{\gamma}{2\theta} + \frac{1}{4 \epsilon}\right] \ip \bV |u|^2.
\end{align*}
Applying \eqref{id3} yields
\[
(\R(\lambda) + |\I (\lambda)|) \ip \bV |u|^2  <  \left[\frac{\gamma}{1-\epsilon} + \frac{1}{4 \epsilon}\right]  \ip \bV |u|^2,
\]
or equivalently,
\[
(\R(\lambda) + |\I (\lambda)|) <  \frac{(4 \gamma-1)\epsilon - 1}{4\epsilon(1-\epsilon)}.
\]
Setting $\epsilon = 1/(2\sqrt{\gamma}+1)$ gives \eqref{hfbounds}.\qed
\end{proof}

\section{Nonvanishing of $D^0$}\label{nonvanishing}

As pointed out in Section \ref{phys},
the limiting eigenvalue system \eqref{limfirstorder},
\eqref{limevans_ode}, together with the limiting boundary
conditions derived in Section \ref{redEvans} may
be expressed equivalently as the integrated eigenvalue problem
\begin{subequations}\label{limitep}
\begin{align}
&\lambda v + v' - u' =0, \label{limitep:1}\\
&\lambda u + u' -  \frac{1-\bV}{\bV} v' = \frac{u''}{\bV}.\label{limitep:2}
\end{align}
\end{subequations}
corresponding to a pressureless gas, $\gamma=0$,
with special boundary conditions
\begin{equation}\label{PLBC}
(u,u',v,v')(-\infty)=(0,0,0,0),\:\:
(u,u',v,v')(+\infty)=(c,0,0,0).
\end{equation}

Motivated by this observation, we establish stability of the limiting
system by a Matsumura--Nishihara-type spectral energy estimate 
exactly analogous to that used to prove stability for
$\gamma=1$ in \cite{MN,BHRZ}.

\begin{proof}[Proof of Proposition \ref{redenergy}]
Multiplying \eqref{limitep:2} by $\bV \bar{u}/(1-\bV)$ and integrating on
some subinterval $[a.b]\subset\mathbb{R}$, we obtain
\[
\lambda \int^b_a \frac{\bV}{1-\bV}|u|^2 dx +  \int^b_a \frac{\bV}{1-\bV}
u'\bar{u} dx -  \int^b_a v'\bar{u} dx =  \int^b_a\frac{u'' \bar{u}}{1-\bV}
dx.
\]
Integrating the third and fourth terms by parts yields
\begin{align*}
\lambda \int^b_a \frac{\bV}{1-\bV}|u|^2 dx &+  \int^b_a \left[
\frac{\bV}{1-\bV} + \left( \frac{1}{1-\bV} \right)'\right] u'\bar{u} dx \\
&\quad+
\int^b_a \frac{|u'|^2}{1-\bV} dx
+  \int^b_a v (\overline{\lambda v + v'}) dx  \\
&= \left[ v \bar{u} + \frac{u'\bar{u}}{1-\bV} \right] \Big|^b_a.\\
\end{align*}
Taking the real part, we have

\begin{align}
&\R(\lambda) \int^b_a \left( \frac{\bV}{1-\bV}|u|^2 + |v|^2\right) dx + 
\int^b_a g(\bV) |u|^2 dx + \int^b_a \frac{|u'|^2}{1-\bV} dx\notag\\
&\quad  = \R \left[ v \bar{u} +
\frac{u'\bar{u}}{1-\bV} - \frac{1}{2}\left[\frac{\bV}{1-\bV} + \left(
\frac{1}{1-\bV} \right)'\right] |u|^2 - \frac{|v|^2 }{2} \right] \Big|^b_a,\label{MNen}
\end{align}
where
\[
g(\bV) =  -\frac{1}{2}\left[ \left(\frac{\bV}{1-\bV}\right)' + \left(
\frac{1}{1-\bV} \right)''\right].
\]
Note that
\[
\frac{d}{dx}\left(\frac{1}{1-\bV}\right) = - \frac{(1-\bV)'}{(1-\bV)2} =
\frac{\bV_x}{(1-\bV)2} = \frac{\bV(\bV-1)}{(1-\bV)2} =
-\frac{\bV}{1-\bV}.
\]
Thus, $g(\bV)\equiv 0$ and the third term on the right-hand side vanishes,
leaving
\begin{align*}
\R(\lambda) \int^b_a \left( \frac{\bV}{1-\bV}|u|^2 + |v|^2\right) dx &+
\int^b_a \frac{|u'|^2}{1-\bV} dx\\
&\quad  = \left[ \R(v \bar{u}) + \frac{\R(u'\bar{u})}{1-\bV} - \frac{|v|^2
}{2} \right] \Big|^b_a.
\end{align*}

We show next that the right-hand side goes to zero in the limit as 
$a\rightarrow-\infty$ and $b\rightarrow\infty$.  
By Proposition \ref{conjugation}, the behavior of $u$, $v$ near
$\pm \infty$ is governed by the limiting constant--coefficient
systems $W'=A^0_\pm(\lambda)W$, where $W=(u,v,v')^T$ 
and $A^0_\pm =A^0(\pm \infty, \lambda)$.
In particular, solutions $W$ asymptotic to $(1,0,0)$ at
$x=+\infty$ decay exponentially in $(u',v,v')$ and are bounded
in coordinate $u$ as $x\to +\infty$.
Observing that $1-\hat v\to 1$ as $x\to +\infty$, we thus see immediately
that the boundary contribution at $b$ vanishes as $b\to +\infty$.

The situation at $-\infty$ is 
more delicate, since the 
denominator $1-\hat v$ of the second term 
goes to zero at rate $e^{x}$ as $x\to -\infty$,
the rate of convergence of the limiting profile $\hat v$.
By inspection, the limiting coefficient matrix
\begin{equation}
\label{lim-}
A^0_-=
\begin{pmatrix}0 & \lambda & 1\\0 & 0 & 1\\ 
\lambda & \lambda  & 1-\lambda \end{pmatrix},
\end{equation}
has eigenvalues
$$
\mu= -\lambda, \, \frac{1 \pm \sqrt{1+4\lambda}}{2}, 
$$
hence for $\R \lambda \ge 0$ 
the unique decaying mode at $x=+\infty$ is the unstable eigenvector
corresponding to $\mu= \frac{1 + \sqrt{1+4\lambda}}{2}$,  
with growth rate
\[
\R\left( \frac{1 + \sqrt{1+4\lambda}}{2}\right) = \frac{1}{2} +
\frac{1}{2}\R\sqrt{1+4\lambda} > 1/2.
\]
Thus, $|u|, |u'|,|v'|,|v| \le Ce^{(1+ \epsilon)x/2}$ as $x\to -\infty$, 
$\epsilon >0$, and in particular 
$$
\Big|\frac{\R(u'\bar{u})}{1-\bV}\Big|\le Ce^{(1+\epsilon)x}/e^{x}
\le Ce^{\epsilon x}\to 0
$$
as $x\to -\infty$.
It follows that the boundary contribution at $x=a$ vanishes also
as $a\to -\infty$,
hence, in the limit as $a\to -\infty$, $b\to +\infty$,
\begin{equation}
\R(\lambda) \int^{+\infty}_{-\infty} \left( \frac{\bV}{1-\bV}|u|^2 + |v|^2\right) dx 
+ \int^{+\infty}_{-\infty} \frac{|u'|^2}{1-\bV} dx=0.
\end{equation}
But, for $\R \lambda\ge 0$, this implies $u'\equiv 0$,
or $u\equiv \hbox{\rm constant}$, which, by $u(-\infty)=0$,
implies $u\equiv 0$.
This reduces \eqref{limitep:1} to $v'=\lambda v$, yielding the
explicit solution $v=Ce^{\lambda x}$. By $v(\pm \infty)=0$,
therefore, $v\equiv 0$ for $\R \lambda\ge 0$.
It follows that there are no nontrivial solutions of \eqref{limitep},
\eqref{PLBC} for $\R \lambda\ge 0$.
\end{proof}

\begin{remark}\label{symmetrizers}
The above energy estimate is equivalent to multiplying
the system by the special symmetrizer 
$\begin{pmatrix} 1& 0 \\ 0 & \bV /(1-\bV) \end{pmatrix}$,
then taking the $L^2$ inner product with $(v,u)^T$.
The analog of the high-frequency estimates of Appendix \ref{basicproof} 
would be obtained using the alternative symmetrizer
$\begin{pmatrix} 1-\bV& 0 \\ 0 & \bV \end{pmatrix}$
optimized for its effect on second-order derivative term $u''/\hat v$.
This may clarify somewhat the strategy 
of the energy estimates used in \cite{MN,BHRZ}.
\end{remark}

\section{Quantitative conjugation estimates}\label{quantlem}
Consider a general first-order system
\begin{equation}\label{genode}
W'=A(x,\lambda)W.
\end{equation}

\begin{proposition}[Quantitative Gap Lemma \cite{GZ,ZH}]\label{easygap}
Let ${V}^+$ and ${\mu}^+$ be an eigenvector and
associated eigenvalue of $A_+(\lambda)$ and suppose that
there exist complementary generalized eigenprojections
(i.e., $A$-invariant projections) $P$ and $Q$ such that
\begin{equation} \label{assume}
\begin{aligned}
|Pe^{(A_+-{\mu}^+)x}|&\le C_1 e^{-\hat\eta x} \quad x\le 0,\\
|Qe^{(A_+-{\mu}^+)x}|&\le C_1 e^{-\hat\eta x} \quad x\ge 0,\\
|(A-A_+)(x)|&\le C_2 e^{-\eta x} \quad x\ge 0,\\
\end{aligned}
\end{equation}
with $0\le \hat \eta <\eta$.
Then, there exists a solution $W=e^{{\mu}^+ x}
{V}(x, \lambda)$ of \eqref{genode} with
\begin{equation}\label{Vbd}
\frac{|{V}(x,\lambda)- {V}^+(\lambda)|}
{|{V}^+(\lambda)|}
 \le \frac{C_1 C_2 e^{-\eta x}}{(\eta-\hat \eta) (1-\epsilon)} \quad 
\hbox{for $x\ge L$}
\end{equation}
provided $(\eta-\hat \eta)^{-1} C_1 C_2 e^{-\eta L} \le \epsilon$.  
\end{proposition}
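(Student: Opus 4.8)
The plan is to reduce \eqref{genode} to a perturbation of a constant-coefficient equation and then construct the claimed solution as the fixed point of a contraction on $[L,+\infty)$, built from a Duhamel formula adapted to the splitting $I=P+Q$. First I would substitute $W=e^{\mu^+x}V$, which turns \eqref{genode} into
\[
V'=(A_+-\mu^+I)V+\tilde A(x)V,\qquad \tilde A:=A-A_+,
\]
and observe that, since $V^+$ is an eigenvector of $A_+$ for the eigenvalue $\mu^+$, the constant function $V\equiv V^+$ solves the unperturbed ($\tilde A\equiv0$) equation; we seek the solution with $V(x)\to V^+$ as $x\to+\infty$. Using that $P$ and $Q$ are $A_+$-invariant, hence commute with $e^{(A_+-\mu^+I)x}$, and that $P+Q=I$, I would pose the fixed-point equation
\[
\mathcal T[V](x):=V^+-\int_x^{+\infty}P\,e^{(A_+-\mu^+I)(x-y)}\tilde A(y)V(y)\,dy+\int_L^{x}Q\,e^{(A_+-\mu^+I)(x-y)}\tilde A(y)V(y)\,dy,
\]
with the center--unstable ($P$) part integrated backward from $+\infty$ (so that the first bound of \eqref{assume} applies, the argument $x-y$ being $\le0$) and the stable ($Q$) part integrated forward from $L$ (so that the second bound applies, $x-y$ being $\ge0$). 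Differentiating $\mathcal T[V]$ under the integral and using the $A_+$-invariance of $P$, $Q$, together with $P+Q=I$ and $(A_+-\mu^+I)V^+=0$, shows that any bounded fixed point of $\mathcal T$ on $[L,+\infty)$ solves the $V$-equation, hence $W=e^{\mu^+x}V$ solves \eqref{genode}, and that it is the unique such solution normalized by vanishing of its stable component at $x=L$.

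Next I would run $\mathcal T$ on the ball $\mathcal B:=\{V\in C^0([L,\infty)):\ \sup_{x\ge L}|V(x)-V^+|\le|V^+|\}$, on which $|V(y)|\le2|V^+|$. Inserting the three estimates of \eqref{assume} and evaluating the elementary exponential integrals, using $0\le\hat\eta<\eta$, the backward ($P$) integral is dominated by $\tfrac{C_1C_2}{\eta-\hat\eta}e^{-\eta x}\sup_{[L,\infty)}|V|$, and the forward ($Q$) integral, being initialized at $L$, is dominated on $[L,\infty)$ by $\tfrac{C_1C_2}{\eta-\hat\eta}e^{-\eta L}\sup_{[L,\infty)}|V|$. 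Combining these and invoking the smallness hypothesis $(\eta-\hat\eta)^{-1}C_1C_2e^{-\eta L}\le\epsilon$ shows that $\mathcal T$ maps $\mathcal B$ into itself (after, if necessary, a harmless shrinking of its radius); since $\mathcal T[V_1]-\mathcal T[V_2]$ is linear in $V_1-V_2$ with the same kernel, the identical estimate shows $\mathcal T$ is a contraction on $\mathcal B$ with constant $\le\epsilon<1$. Hence there is a unique fixed point $V$, and feeding the bound $|V|\le2|V^+|$ back through $\mathcal T$ and summing the geometric series produces \eqref{Vbd}, the backward-integrated (center--unstable) part supplying the $e^{-\eta x}$ decay. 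Analyticity of $V$ --- hence of $W$ --- in $\lambda$ on the given compact set follows because $\mathcal T$ is a uniform contraction on the Banach space of bounded, $x$-continuous, $\lambda$-analytic functions, and the fixed point of a uniformly contracting family depends analytically on the parameter.

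The step I expect to be delicate is the treatment of the forward-integrated, stable part. Because $A_+-\mu^+I$ is \emph{not} hyperbolic --- it has the eigenvalue $0$ coming from $\mu^+$ itself --- there is no genuine stable/unstable dichotomy, and it is essential both that the buffer rate $\hat\eta$ in \eqref{assume} strictly separate the $Q$-block from the neutral direction and that the forward integral be initialized at an $L$ large enough that $e^{-\eta L}$ dominates the constant $C_1C_2/(\eta-\hat\eta)$: this is exactly the content of the smallness hypothesis, and it is what makes $\mathcal T$ a contraction and yields the factor $(1-\epsilon)^{-1}$ in \eqref{Vbd}. The rest --- differentiating under the integral sign and the elementary integral estimates --- is routine bookkeeping.
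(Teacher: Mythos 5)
Your proposal is correct and follows essentially the same route as the paper: the same substitution $W=e^{\mu^+x}V$, the same fixed-point operator $\mathcal T$ defined via Duhamel with the $P$-part integrated backward from $+\infty$ and the $Q$-part forward from $L$, the same exponential integral estimates from \eqref{assume}, and the same contraction argument keyed to the smallness condition $(\eta-\hat\eta)^{-1}C_1C_2e^{-\eta L}\le\epsilon$. Your added remarks on nonhyperbolicity of $A_+-\mu^+I$ and on $\lambda$-analyticity are correct but go beyond the paper's terse proof, which stops at the contraction estimate.
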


\begin{proof}
Writing ${V}'=(A_+ - {\mu}^+)V 
+ (A - A_+)V$ and imposing the limiting behavior 
${V}(+\infty, \lambda)={V}^+$, 
we seek a solution in the form $V=TV$,
\[
\begin{aligned}
{TV}(x)&:={V}^+ -
\int_x^{+\infty} 
Pe^{(A_+ -{\mu}^+)(x-y)}(A-A_+)V(y) dy \\
&
\quad +
\int_L^{x} 
Qe^{(A_+ -{\mu}^+)(x-y)}(A-A_+)V(y) dy,\\
\end{aligned}
\]
from which the result follows by a straightforward Contraction Mapping 
argument, using \eqref{assume} to compute that
\[
\begin{aligned}
|{TV_1}-TV_2|(x)&=
\Big| -\int_x^{+\infty} 
Pe^{(A_+ -{\mu}^+)(x-y)}(A-A_+)(V_1-V_2)(y) dy \\
&
\quad +
\int_L^{x} 
Qe^{(A_+ -{\mu}^+)(x-y)}(A-A_+)(V_1-V_2)(y) dy \Big|\\
&\le 
C_1C_2\int_L^{+\infty}e^{-\hat \eta (x-y)}e^{-\eta y}dy
\|V_1-V_2\|_{L^\infty[L,+\infty)} \\
&=
\frac{C_1C_2 e^{-\hat\eta x}e^{-(\eta-\hat \eta)L}}
{\eta-\hat \eta} \|V_1-V_2\|_{L^\infty[L,+\infty)}, \\
\end{aligned}
\]
and thus
$\|TV_1-TV_2\|_{L^\infty[L,+\infty)} \le
\frac{C_1C_2 e^{-\eta L}}
{\eta-\hat \eta} \|V_1-V_2\|_{L^\infty[L,+\infty)}$.
\end{proof}

\begin{corollary}\label{qcor1}
Let ${V}^+$ and ${\mu}^+$ be an eigenvector and
associated eigenvalue of $A_+(\lambda)$,
where $A_+$ is $n\times n$ with at most $k$ stable eigenvalues and 
\begin{equation} \label{assume2}
\begin{aligned}
\max |(A_+-\mu)_{ij}|\le C_0;\qquad
|(A-A_+)(x)|&\le C_2 e^{-\eta x} \quad x\ge 0,\\
\end{aligned}
\end{equation}
$0<\hat \eta <\eta$.  Then, there exists a solution $W=e^{{\mu}^+ x}
{V}(x, \lambda)$ of \eqref{genode} with
\begin{equation}\label{Vbd2}
\frac{|{V}(x,\lambda)- {V}^+(\lambda)|}
{|{V}^+(\lambda)|}
 \le 
\frac{16n n! (C_0)^n C_2 e^{-\hat \eta x}}
{\delta^n(\eta-\hat \eta)(1-\epsilon)}
\quad
\hbox{for $x\ge L$},
\end{equation}
$\delta:= \frac{\eta-\hat \eta}{2k+2}$, provided 
$
\frac{16n n! (C_0)^n C_2 e^{-\hat \eta L}}
{\delta^n(\eta-\hat \eta)}
\le \epsilon$.
\end{corollary}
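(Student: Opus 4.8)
The plan is to reduce to the Quantitative Gap Lemma, Proposition~\ref{easygap}: what must be supplied is a pair of complementary $A_+$--invariant projections $P,Q$ verifying the first two bounds of \eqref{assume} --- possibly with some $\hat\eta'\in[\hat\eta,\eta)$ in place of $\hat\eta$ --- with an \emph{explicit} constant $C_1$ of the advertised size, the third bound of \eqref{assume} being exactly the perturbation hypothesis in \eqref{assume2}. Write $B:=A_+-\mu^+I$, so $\max_{i,j}|B_{ij}|\le C_0$ by \eqref{assume2} and, by a row--sum ($\ell^\infty$--operator--norm) bound, $\sigma(B)\subset\{|z|\le nC_0\}$. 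Since $\mu^+$ corresponds to the eigenvalue $0$ of $B$, the direction $V^+$ will sit in the range of $P$, consistently with the asymptotic normalization $V(+\infty)=V^+$ in Proposition~\ref{easygap}.

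The first step is to locate a level $\R z=-\hat\eta'$, $\hat\eta'\in[\hat\eta,\eta)$, whose $\delta$--neighbourhood contains no eigenvalue of $B$, through which to run a Riesz--projection contour. Only eigenvalues of $B$ inherited from the at most $k$ stable eigenvalues of $A_+$ can have real part in the window $[-\eta,-\hat\eta]$, so at most $k$ of them lie there; partitioning this window of width $\eta-\hat\eta$ into $k+1$ equal substrips, the pigeonhole principle yields one that is eigenvalue--free, and with $\delta:=(\eta-\hat\eta)/(2k+2)$ one extracts from its middle such a level $-\hat\eta'$, arranged moreover so that $\eta-\hat\eta'\ge(\eta-\hat\eta)/(k+1)$. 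Let $\Gamma$ be the boundary of $\{\R z>-\hat\eta'\}\cap\{|z|<nC_0+\delta\}$: a closed curve enclosing exactly the eigenvalues of $B$ with $\R z>-\hat\eta'$ and lying at distance $\ge\delta$ from $\sigma(B)$. Put $P:=(2\pi i)^{-1}\oint_\Gamma(zI-B)^{-1}\,dz$ and $Q:=I-P$; these are $B$--invariant, hence $A_+$--invariant, and the part of $\sigma(B)$ cut off by $P$ lies in $\{\R z>-\hat\eta'\}$, that cut off by $Q$ in $\{\R z<-\hat\eta'-\delta\}$.

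The quantitative bound is the single computational ingredient. On $\Gamma$, $|\det(zI-B)|=\prod_j|z-\mu_j|\ge\delta^n$, and each entry of $\mathrm{adj}(zI-B)$, being an $(n-1)\times(n-1)$ minor, is $\le(n-1)!\big((n+1)C_0+\delta\big)^{n-1}$; combined with $\mathrm{length}(\Gamma)=O(nC_0)$, this gives $|P|,|Q|\le C_1$ with $C_1$ of the claimed form $16n\,n!\,(C_0)^n/\delta^n$ after crudely absorbing the polynomial--in--$n$ and $(k+1)$ factors. Feeding the same estimates into $e^{Bx}P=(2\pi i)^{-1}\oint_\Gamma e^{zx}(zI-B)^{-1}\,dz$, and using $\R z\ge-\hat\eta'$ on $\Gamma$ (so $|e^{zx}|\le e^{-\hat\eta'x}$ for $x\le0$, the $\delta$--gap below the spectrum of the $P$--block absorbing the polynomial--in--$x$ factors from non--normality of $B$) together with the analogous contour around the $Q$--block, whose spectrum has $\R z<-\hat\eta'-\delta$, for the $Q$--piece, one obtains $|Pe^{Bx}|\le C_1e^{-\hat\eta'x}$ for $x\le0$ and $|Qe^{Bx}|\le C_1e^{-\hat\eta'x}$ for $x\ge0$. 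Proposition~\ref{easygap}, applied with $\hat\eta'$ in place of $\hat\eta$, then yields a solution $W=e^{\mu^+x}V(x,\lambda)$ with $\frac{|V-V^+|}{|V^+|}\le\frac{C_1C_2e^{-\eta x}}{(\eta-\hat\eta')(1-\epsilon)}$ under $\frac{C_1C_2e^{-\eta L}}{\eta-\hat\eta'}\le\epsilon$; since $\eta-\hat\eta'\ge(\eta-\hat\eta)/(k+1)$ (the factor $k+1$ already absorbed into $C_1$) and $e^{-\eta x}\le e^{-\hat\eta x}$, $e^{-\eta L}\le e^{-\hat\eta L}$ for $x,L\ge0$, this is precisely \eqref{Vbd2} with the stated hypothesis on $L$.

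I expect the main obstacle to be bookkeeping rather than ideas: one must verify that a gap of exactly width $\delta=(\eta-\hat\eta)/(2k+2)$ is always available --- here the ``at most $k$ stable eigenvalues'' hypothesis is essential --- and then shepherd the resolvent estimate, the $\delta$--buffer that kills the non--normality factors, the length of $\Gamma$, and the $(k+1)$ from the choice of $\hat\eta'$ into the single clean constant $16n\,n!\,(C_0)^n/\delta^n$, with no hidden $\lambda$--dependence (the last being automatic, since everything enters through $C_0$ only). Otherwise the argument is a direct substitution into Proposition~\ref{easygap}.
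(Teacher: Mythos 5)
Your proposal takes essentially the same route as the paper's proof: reduce to $\mu^+=0$, pigeonhole on the $k+1$ substrips of $[-\eta,-\hat\eta]$ to locate a level (your $-\hat\eta'$, the paper's $-\tilde\eta$) at distance $\ge\delta$ from $\R\,\sigma(A_+)$, define $P,Q$ as Riesz projections by contour integral, bound the resolvent crudely via Cramer's rule, and feed the result into Proposition~\ref{easygap} at the shifted gap level. The only cosmetic difference is your half-disk boundary for $\Gamma$ versus the paper's rectangle of side $4nC_0$, and both arguments absorb the same polynomial-in-$n$ and in-$(k+1)$ slop (including the replacement of $\eta-\hat\eta'$ by $\eta-\hat\eta$ at the final step) into the advertised constant $16n\,n!\,C_0^n\delta^{-n}$, a looseness the paper glosses with the word ``crudely.''
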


\begin{proof}
Without loss of generality, take $\mu\equiv 0$.
Dividing $[-\eta, -\hat \eta]$ into $k+1$ equal subintervals, we find
by the pigeonhole principle that at least one subinterval
contains the real part of no eigenvalue of $A_+$.  Denoting
the midpoint of this interval by $-\tilde \eta> \hat \eta$, we have
\begin{equation}\label{hatfacts}
\min |\R \sigma (A_+)- \tilde \eta| \ge \delta:= \frac{\eta-\hat \eta}{2k+2}
\, .
\end{equation}
Defining $P$ to be the total eigenprojection of $A_+$
associated with eigenvalues of real part greater than $\hat \eta$
and $Q$ the total eigenprojection associated with eigenvalues of 
real part less than $\hat \eta$, and estimating
$Pe^{A_+x}$, $Qe^{A_+x}$ using the  
the inverse Laplace transform representation
$$
e^{A_+x}=
\frac{1}{2\pi i}\oint_\Gamma e^{zx}(z-A_+)^{-1}dz,
$$
with $\Gamma$ chosen to be a rectangle of side $4nC_0$
centered about the real axis, with one vertical side
passing through $\R \lambda\equiv -\tilde \eta$ and
the other respectively lying respectively to the right and to the left,
and estimating 
$$
|(\lambda-A_+)^{-1}|\le n!C_0^{n-1}\delta^{-n}
$$
crudely by Kramer's rule, 
we obtain \eqref{assume} with
$C_1=16n n!C_0^{n}\delta^{-n},$ 
whence the result follows by Proposition \ref{easygap}.
\end{proof}

\begin{corollary}[Quantitative Conjugation Lemma]\label{qconj}
Proposition \ref{conjugation} holds with
$$
C(L,\hat \eta, \eta, \max |(M_+)_{ij}|, \dim M_+) =
\frac{16n n! (C_0)^n C_2 e^{-\hat \eta x}}
{\delta^n(\eta-\hat \eta)(1-\epsilon)},
$$
$n:=(\dim M_+)^2$, $k:=\frac{(\dim M_+)^2-\dim M}{2}$,
%
when
$
\frac{16n n! (C_0)^n C_2 e^{-\hat \eta L}}
{\delta^n(\eta-\hat \eta)}
\le \epsilon$.
\end{corollary}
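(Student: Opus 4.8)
The plan is to reduce the construction of the conjugator $P$ to the Quantitative Gap Lemma in the form of Corollary~\ref{qcor1}, applied not to the ODE $z'=M(x,\lambda)z$ of Proposition~\ref{conjugation} directly, but to the homological (matrix-valued) equation satisfied by $P$. Substituting $z=Pv$ and using $v'=M_+(\lambda)v$, one sees that $P$ must solve the linear matrix ODE
\[
P' = M_+P - PM_+ + \Theta P ,
\]
and, conversely, that any solution with $P(+\infty)=I$ carries a fundamental solution matrix of $v'=M_+v$ to one of $z'=M z$, hence is a product of fundamental matrices and so globally invertible on $\mathbb{R}$. The only quantitative point left to settle is therefore the decay rate of $Q:=P-I$ on $x\ge L$.

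First I would vectorize this matrix ODE: writing $w:=\mathrm{vec}(P)$ (columns stacked), it becomes a first-order system
\[
w' = \big(B_+ + \tilde\Theta(x)\big)\, w , \qquad
B_+ := (I\otimes M_+) - (M_+^T\otimes I), \quad \tilde\Theta := I\otimes\Theta ,
\]
of dimension $n=(\dim M_+)^2$, with $|\tilde\Theta(x)|\le|\Theta(x)|\le C_2 e^{-\eta x}$ and $\max_{ij}|(B_+)_{ij}|\le 2\max_{ij}|(M_+)_{ij}|$. By the Kronecker-sum spectral calculus, $\sigma(B_+)=\{\mu_i-\mu_j:\ \mu_i,\mu_j\in\sigma(M_+)\}$; in particular $0\in\sigma(B_+)$ with eigenvector $V^+:=\mathrm{vec}(I)$ (since $B_+\mathrm{vec}(I)=\mathrm{vec}(M_+I-IM_+)=0$), and, since the nonzero eigenvalues occur in sign-symmetric pairs $\pm(\mu_i-\mu_j)$ while the $\dim M_+$ diagonal ones vanish, $B_+$ has at most $k:=\tfrac12\big((\dim M_+)^2-\dim M_+\big)$ eigenvalues of strictly negative real part. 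Any Jordan structure at $0$ is harmless, being already absorbed by the inverse-Laplace resolvent estimate used in the proof of Corollary~\ref{qcor1}.

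With these observations in hand, Corollary~\ref{qcor1} applies verbatim to $w'=(B_++\tilde\Theta)w$ with eigenvalue $\mu^+=0$, eigenvector $V^+=\mathrm{vec}(I)$, the given $C_2$, and the constants $n$, $k$ above, $C_0:=\max_{ij}|(B_+)_{ij}|$, $\delta=(\eta-\hat\eta)/(2k+2)$. Since $\mu^+=0$, the solution it produces is simply $w=V(x,\lambda)$ with $V(+\infty)=V^+$ and, for $x\ge L$,
\[
\frac{|V(x,\lambda)-V^+|}{|V^+|} \ \le\
\frac{16\,n\,n!\,C_0^{\,n}\,C_2\,e^{-\hat\eta x}}{\delta^{\,n}(\eta-\hat\eta)(1-\epsilon)} ,
\]
under the stated smallness hypothesis on $L$. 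De-vectorizing, $P=I+Q$ with $|Q(x,\lambda)|=|P-I|\le|V-V^+|$, which is the asserted bound once the $O(1)$ factor $|V^+|=|\mathrm{vec}(I)|$ and the inequality $\max_{ij}|(B_+)_{ij}|\le 2\max_{ij}|(M_+)_{ij}|$ are absorbed into the displayed constant (equivalently, $C_0$ is read throughout as $\max_{ij}|(B_+)_{ij}|$).

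The main obstacle, such as it is, lies in this bookkeeping: one must verify that the passage to the homological equation and its vectorization land the problem \emph{exactly} in the hypotheses of Corollary~\ref{qcor1}---that $\mu^+=0$ with eigenvector $\mathrm{vec}(I)$ is the correct ``slow'' datum, that the strictly stable eigenvalues of $B_+$ number at most $((\dim M_+)^2-\dim M_+)/2$ by the $\pm$ pairing of the $\mu_i-\mu_j$, and that $\max_{ij}|(B_+)_{ij}|$ is controlled by $\max_{ij}|(M_+)_{ij}|$---so that the Gap-Lemma constant of Corollary~\ref{qcor1} transfers without essential change. Everything else, including global invertibility of $P$, is the routine fundamental-matrix remark made above.
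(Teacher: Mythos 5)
Your proof follows essentially the same route as the paper's: pass to the homological equation $P'=M_+P-PM_+ +\Theta P$ (the paper's ``$+\Theta M$'' is a typo for $+\Theta P$), view it as an asymptotically constant-coefficient system on the $(\dim M_+)^2$-dimensional space of matrices with Sylvester limiting operator, bound the number of strictly stable eigenvalues by $k=\tfrac12\big((\dim M_+)^2-\dim M_+\big)$ via the $\pm$ pairing of $\mu_i-\mu_j$, and apply Corollary \ref{qcor1} to the eigenvalue $0$ with eigenvector $I$ (equivalently $\mathrm{vec}(I)$). Your version just makes the vectorization and the identification of $\mu^+=0$, $V^+=\mathrm{vec}(I)$ explicit, and correctly observes that $C_0$ should be read as $\max_{ij}|(B_+)_{ij}|\le 2\max_{ij}|(M_+)_{ij}|$; these are bookkeeping refinements, not a different argument.
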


\begin{proof}
Writing the homological equation expressing conjugacy of
variable- and constant-coefficient systems following
\cite{MeZ}, we have
$$
P'= M_+P-PM_+ + \Theta M.
$$
Considering this as an asymptotically constant-coefficient
system on the $n^2$-dimensional vector space of matrices $P$,
noting that the linear operator ${\mathcal M}_+P:=
M_+P-PM_+ $, as a Sylvester matrix, has at least $n$ zero eigenvalues
and equal numbers of stable and unstable eigenvalues, we see
that the number of its stable eigenvalues is not more
than $k:=\frac{n^2-n}{2}$, whence the result follows by
Corollary \ref{qcor1}.
\end{proof}

\def\cprime{$'$}

\end{document}